\newcommand{\F}{{\mathbb{F}}}
\newcommand{\Z}{{\mathbb{Z}}}
\newcommand{\Q}{{\mathbb{Q}}}
\newcommand{\q}{{\mathfrak{q}}}
\newcommand{\p}{{\mathfrak{p}}}
\newcommand{\OO}{{\mathcal{O}}}
\newcommand*{\rom}[1]{\expandafter\@slowromancap\romannumeral #1@}
\DeclareFontFamily{U}{wncy}{}
    \DeclareFontShape{U}{wncy}{m}{n}{<->wncyr10}{}
    \DeclareSymbolFont{mcy}{U}{wncy}{m}{n}
    \DeclareMathSymbol{\Sh}{\mathord}{mcy}{"58}
\theoremstyle{plain}
\newtheorem{theorem}{Theorem}[section]
\newtheorem*{theorem*}{Theorem}
\newtheorem*{conj*}{Sylvester's Conjecture}
\newtheorem*{conj'}{Generalised Sylvester's Conjecture}
\newtheorem{proposition}[theorem]{Proposition}
\newtheorem{rem}[theorem]{Remark}
\newtheorem{lemma}[theorem]{Lemma}
\newtheorem{corollary}[theorem]{Corollary}
\newtheorem{defn}[theorem]{Definition}
\newtheorem{conj}[theorem]{Conjecture}
\begin{document}

\title{Cube sum problem for integers having exactly two distinct prime factors} 

\author{Dipramit Majumdar and Pratiksha Shingavekar}
\address{\begin{scriptsize}Dipramit Majumdar, \href{mailto:dipramit@iitm.ac.in}{dipramit@iitm.ac.in}, IIT Madras, India\end{scriptsize}}
\address{\begin{scriptsize}Pratiksha Shingavekar, \href{mailto:pshingavekar@gmail.com}{pshingavekar@gmail.com}, IIT Madras, India\end{scriptsize}}
\keywords{Cube sum problem, Sylvester's conjecture, Elliptic curves, 3-Selmer groups}
\subjclass[2020]{Primary: 11F85, 11G05, 11G07, Secondary: 11D25, 11Y40}
\maketitle

\begin{abstract} 
  Given an integer $n>1$, it is a classical Diophantine problem that whether $n$ can be written as a sum of two rational cubes. The study of this problem, considering several special cases of $n$, has a copious history that can be traced back to the works of Sylvester, Satg{\'e}, Selmer etc. and up to the recent works of Alp{\"o}ge-Bhargava-Shnidman. In this article, we consider the cube sum problem for cube-free integers $n$ which has two distinct prime factors none of which is $3$. 
\end{abstract}

\section*{Introduction}\label{intro}
It is a long been asked Diophantine question: which cube-free integers \begin{small}$n>1$\end{small} can be expressed as a sum of cubes of two rational numbers? If \begin{small}$n>2$\end{small} is a cube-free integer and \begin{small}$\text{rk } E_{-432n^2}(\Q)$\end{small} denotes the Mordell-Weil rank of the elliptic curve \begin{small}$E_{-432n^2}:y^2=x^3-432n^2$\end{small} over \begin{small}$\Q$\end{small}, then it is a well-known fact that \begin{small}$n$\end{small} can be written as a sum of two rational cubes if and only if \begin{small}$\text{rk } E_{-432n^2}(\Q) > 0$.\end{small} For a prime \begin{small}$\ell \ge 5$,\end{small} a conjecture classically attributed to Sylvester (see for example \cite{dv}) predicts that
\begin{small} $$\text{rk } E_{-432\ell^2}(\Q)=\begin{cases} 0, & \text{if } \ell \equiv 2,5 (\text{mod }9),\\ 1, & \text{if } \ell \equiv 4,7,8 (\text{mod }9),\\ 0 \text{ or } 2, & \text{if } \ell \equiv 1 (\text{mod }9), \end{cases} \enspace 
\text{rk } E_{-432\ell^4}(\Q)=\begin{cases} 0, & \text{if } \ell \equiv 2,5 (\text{mod }9),\\ 1, & \text{if } \ell \equiv 4,7,8 (\text{mod }9),\\ 0 \text{ or } 2, & \text{if } \ell \equiv 1 (\text{mod }9). \end{cases}$$ \end{small}

One can reformulate the conjecture above as follows:
\begin{conj*}\label{sylconj}
    Let \begin{small}$\ell \ge 5$\end{small} be a prime and either \begin{small}$n = \ell$\end{small} or \begin{small}$n = \ell^2$.\end{small} We have

 \begin{enumerate}
              \item If \begin{small}$n \not\equiv \pm 1 \pmod{9}$,\end{small} then 
            \begin{small}$\text{rk } E_{-432n^2}(\Q) = \begin{cases}
            0, & \text{ if } \ell \equiv 2 \pmod{3},\\
            1, & \text{ if } \ell \equiv 1 \pmod{3}.
            \end{cases}$\end{small}
            
            \item If \begin{small}$n \equiv \pm 1 \pmod{9}$,\end{small} then 
            \begin{small}$\text{rk } E_{-432n^2}(\Q) = \begin{cases}
            1, & \text{ if } \ell \equiv 2 \pmod{3},\\
            \{0, 2 \}, & \text{ if } \ell \equiv 1 \pmod{3}.
            \end{cases}$    \end{small}      
\end{enumerate}
\end{conj*}

Several studies of the cube sum problem considering different particular cases for \begin{small}$n$\end{small} have been done by many mathematicians including Sylvester\cite{syl}, Satg{\'e}\cite{sa}, Selmer\cite{sel} and many others. For example, the case when \begin{small}$n=\ell$\end{small} is a prime such that \begin{small}$\ell \equiv 2 \text{ or } 5 \pmod 9$,\end{small} then the conjecture above was proved by Sylvester himself. If \begin{small}$\ell \equiv 4 \text{ or } 7 \pmod 9$\end{small}, then \cite{dv} proved that \begin{small}$\ell$\end{small} and \begin{small}$\ell^2$\end{small} are cube sums under the hypothesis that $3$ is not a cube modulo \begin{small}$\ell$.\end{small} 
In \cite{ms}, the authors showed that there are infinitely many primes congruent to \begin{small}$\pm 1 \pmod 9$\end{small} which are sum of two rational cubes. Many other authors have contributed towards the Sylvester's conjecture and the cube sum problem, for specific congruence classes of primes modulo $9$ \cite{sy}, \cite{yi}. However, the problem is still open in full generality. In \cite{jms}, we proved Sylvester's Conjecture under the hypothesis that the Tate-Shafarevich group \begin{small}$\Sh(E_{16n^2}/\Q)$\end{small} 
 satisfies \begin{small}$\dim_{\F_3} \Sh(E_{16n^2}/\Q)[3]$\end{small} is even.

For a general cube-free integer \begin{small}$n$,\end{small} recently, \cite{abs} announced a proof of the fact that a positive proportion of integers are cube sums and a positive proportion are not. Apart from this, very little is known about this problem in general. If \begin{small}$n$\end{small} has two distinct prime factors, we know some partial results. Satg{\'e}\cite{sa} proved that \begin{small}$n=2\ell$\end{small} (resp. \begin{small}$n=2\ell^2$\end{small}) is a cube sum when \begin{small}$\ell \equiv 2 \pmod 9$\end{small} (resp. \begin{small}$\ell \equiv 5 \pmod 9$\end{small}); \cite{cst} and \cite{kl} also study this situation. In \cite{jms}, we have also examined the integers of type \begin{small} $2\ell$\end{small} and \begin{small}$2\ell^2$\end{small} for their solubility as a cube sum, where \begin{small}$\ell \neq 3$\end{small} is a prime. In this article, we generalize the situation to consider the cube sum problem for cube-free integers \begin{small}$n$\end{small} which are co-prime to $3$ and have exactly two distinct prime factors, that is \begin{small}$n=\ell_1\ell_2$\end{small} or \begin{small}$n=\ell_1^2\ell_2^2$\end{small} or \begin{small}$n=\ell_1^2\ell_2$,\end{small} where \begin{small}$\ell_1$\end{small} and \begin{small}$\ell_2$\end{small} are two distinct primes, none of which is equal to $3$.

We start by stating a conjecture which is slightly more general in nature. 
By the work of Birch-Stephens \cite[\S4]{bs}, we know that for a cube-free \begin{small}$n$,\end{small} the global root number of \begin{small}$E_{-432n^2}$\end{small} is given by \begin{small}$-\omega_3 \prod_{p \neq 3} \omega_p$.\end{small} In particular, when \begin{small}$n$\end{small} is a cube-free integer co-prime to $3$, this formula for the global root number simplifies to  \begin{small}$(-1)^{k_2}$\end{small} (resp. to  \begin{small}$(-1)^{1+k_2}$\end{small}) if \begin{small}$n \equiv \pm 1 \pmod 9$\end{small} (resp. if \begin{small}$n \not\equiv \pm 1 \pmod 9$\end{small}); here \begin{small}$k_2$\end{small} is the number of distinct primes \begin{small}$p$\end{small} which divide \begin{small}$n$\end{small} and are congruent to $2$ modulo $3$. Looking at the sign of the functional equation of \begin{small}$E_{-432n^2}$,\end{small} we predict:
\begin{conj'}\label{conjforrank1}
    Let \begin{small}$n$\end{small} be a cube-free integer co-prime to $3$. Assume that the set of prime divisors of  \begin{small}$n$\end{small} is given by \begin{small}$\{ \ell_1, \ell_2, \dots, \ell_{k_1+k_2}\}, $\end{small} where \begin{small}$\ell_1 \equiv \cdots \equiv \ell_{k_1} \equiv 1 \pmod 3$\end{small} and \begin{small}$\ell_{k_1+1} \equiv \cdots \equiv \ell_{k_1+k_2} \equiv 2 \pmod 3$.\end{small} Let \begin{small}$r_n := \text{rk } E_{-432n^2}(\Q)$.\end{small} We have

 \begin{enumerate}
             \item If \begin{small}$n \not\equiv \pm 1 \pmod{9}$,\end{small} then \begin{small}$r_n \le 2k_1+k_2-1$\end{small} and  \begin{small}$r_n \equiv k_2-1 \pmod 2$.\end{small} 
            \item If \begin{small}$n \equiv \pm 1 \pmod{9}$,\end{small} then \begin{small}$r_n \le 2k_1+k_2$\end{small} and \begin{small}$r_n \equiv k_2 \pmod 2$\end{small}. 
\end{enumerate}
\end{conj'}

\noindent {\bf A heuristic for the Generalised Sylvester's Conjecture: \\}  
Consider the elliptic curves  \begin{small}$E_{16n^2}:y^2=x^3+16n^2$\end{small} and  \begin{small}$E_{-432n^2}:y^2=x^3-432n^2$\end{small} defined over \begin{small}$\Q$.\end{small} It is a well-known fact that there exists a rational $3$-isogeny \begin{small}$\phi_n: E_{16n^2} \to E_{-432n^2}$\end{small} given by \begin{small}$(x,y) \mapsto \big(\frac{x^3+64n^2}{x^2},\frac{y(x^3-128n^2)}{x^3}\big)$.\end{small} This implies that \begin{small}$\text{rk } E_{-432n^2}(\Q)= \text{rk } E_{16n^2}(\Q)$.\end{small} Hence, for determining \begin{small}$r_n$\end{small}, it is enough to obtain \begin{small}$\text{rk } E_{16n^2}(\Q)$.\end{small}

 Let \begin{small}$K:=\Q(\zeta)$\end{small} and \begin{small}$\p:=1-\zeta \in K$,\end{small} where \begin{small}$\zeta=\frac{-1+\sqrt{-3}}{2}$\end{small} denotes the primitive third root of unity. Observe that  \begin{small}$\p^6=-27$\end{small} and so, it follows that \begin{small}$E_{16n^2}$\end{small} and \begin{small}$E_{-432n^2}$\end{small} are isomorphic over \begin{small}$K$.\end{small} Composing \begin{small}$\phi_n$\end{small} with this isomorphism, we obtain a $3$-isogeny  \begin{small}$\phi:E_{16n^2} \to E_{16n^2}$\end{small} defined over \begin{small}$K$\end{small} given by \begin{small}\begin{equation}\label{eq:defofphi}
\phi(x,y)=\Big(\frac{x^3+64n^2}{\p^2x^2},\frac{y(x^3-128n^2)}{\p^3x^3}\Big).
\end{equation}\end{small}  

Further, let \begin{small}$\Sigma_K$\end{small} be the set of all finite primes of \begin{small}$K$\end{small} and \begin{small}$S_{n}:=\{\q \in \Sigma_K \text{ } \big| \quad \q \mid n\}.$\end{small} Note that  \begin{small}$|S_n|=2k_1+k_2$.\end{small} It follows from Corollary \ref{corforrk} that \begin{small}$r_n \le 2k_1+k_2-1$\end{small} (resp. \begin{small}$r_n \le 2k_1+k_2$\end{small}) if \begin{small}$n \not\equiv \pm 1 \pmod 9$\end{small} (resp. if \begin{small}$n \equiv \pm 1 \pmod 9$\end{small}). From these upper bounds on the Mordell-Weil rank of \begin{small}$E_{-432n^2}$\end{small} over \begin{small}$\Q$\end{small} along with the global root number, the parity conjecture implies Generalised Sylvester's Conjecture.\\

 Sylvester's Conjecture is a special case of the Generalised Sylvester's Conjecture for \begin{small}$k_1+k_2=1$.\end{small} In this paper, we study the case \begin{small}$k_1+k_2=2$,\end{small} that is, when \begin{small}$n$\end{small} has exactly two distinct prime divisors both different from $3$. In this special case, the Generalised Sylvester's Conjecture can be restated as follows:
\begin{conj}\label{conjforrank2}
    Let \begin{small}$\ell_1$\end{small} and \begin{small}$\ell_2$\end{small} be two distinct primes both different from $3$. Assume that \begin{small}$n$\end{small} is of the form either \begin{small}$n = \ell_1\ell_2$\end{small} or \begin{small}$n = \ell_1^2\ell_2$\end{small} or \begin{small}$n = \ell_1^2\ell_2^2$. \end{small} We have

 \begin{enumerate}
             \item If \begin{small}$n \not\equiv \pm 1 \pmod{9}$,\end{small} then 
            \begin{small}$\text{rk } E_{-432n^2}(\Q) = \begin{cases}
            \{0, 2\}, & \text{ if } \ell_1\ell_2 \equiv 2 \pmod{3},\\
            1, & \text{ if } \ell_1 \equiv \ell_2 \equiv 2 \pmod{3},\\
            \{1,3\}, & \text{ if } \ell_1 \equiv \ell_2 \equiv 1 \pmod{3}.\end{cases}$\end{small}

            \item If \begin{small}$n \equiv \pm 1 \pmod{9}$,\end{small} then 
            \begin{small}$\text{rk } E_{-432n^2}(\Q) = \begin{cases}
            \{1,3\}, & \text{ if } \ell_1\ell_2 \equiv 2 \pmod{3},\\
            \{0, 2 \}, & \text{ if } \ell_1 \equiv \ell_2 \equiv 2 \pmod{3},\\
            \{0, 2, 4 \}, & \text{ if } \ell_1 \equiv \ell_2 \equiv 1 \pmod{3}.\end{cases}$\end{small}     
\end{enumerate}
\end{conj}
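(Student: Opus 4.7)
The strategy I would use is exactly the heuristic outlined after the statement of the Generalised Sylvester's Conjecture, specialised to $k_1+k_2=2$. The two required ingredients are: (a) an upper bound on $r_n$ obtained via $\phi$-descent over $K=\Q(\zeta)$, and (b) the parity of $r_n$ extracted from the global root number combined with the parity conjecture. For (a), I would invoke Corollary \ref{corforrk}, already cited in the excerpt, which gives $r_n \le 2k_1+k_2-1$ when $n \not\equiv \pm 1 \pmod 9$ and $r_n \le 2k_1+k_2$ when $n \equiv \pm 1 \pmod 9$. Specialised to $k_1+k_2=2$, these bounds coincide with the maxima of the candidate sets appearing in each branch of the conjecture. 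That corollary itself is proved by controlling $\dim_{\F_3}\mathrm{Sel}_{\phi}(E_{16n^2}/K)+\dim_{\F_3}\mathrm{Sel}_{\hat{\phi}}(E_{16n^2}/K)$ through a careful analysis of the local Kummer images at the primes of $K$ above $3$ and above $\ell_1,\ell_2$, and then transferring the resulting bound on the rank over $K$ back to $\Q$ via the isomorphism $E_{-432n^2}\cong E_{16n^2}$ over $K$ together with the rational $3$-isogeny $\phi_n$.

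For (b), the Birch--Stephens formula stated in the excerpt gives the global root number of $E_{-432n^2}$ as $(-1)^{k_2}$ or $(-1)^{1+k_2}$ depending on whether $n \equiv \pm 1 \pmod 9$. Applying the $p=3$ parity conjecture then pins down $r_n \bmod 2$. Since $E_{-432n^2}$ has CM by $\Z[\zeta]$, parity is accessible in many cases through the work of Nekov\'{a}\v{r} and the Dokchitser brothers on CM elliptic curves; where it is not directly available, I would substitute the explicit hypothesis that $\dim_{\F_3}\Sh(E_{16n^2}/\Q)[3]$ is even, mirroring the approach of \cite{jms} for the single-prime case. Combining (a) and (b) then exhausts all six sub-cases of Conjecture \ref{conjforrank2}; for instance, when $k_1=k_2=1$ and $n \not\equiv \pm 1 \pmod 9$, (a) gives $r_n \le 2$ and (b) gives $r_n \equiv 0 \pmod 2$, forcing $r_n \in \{0,2\}$, exactly as claimed, and the other five sub-cases are identical.

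The main obstacle I anticipate lies in the Selmer computation underlying Corollary \ref{corforrk}, and within it the delicate analysis at the wildly ramified prime $\p=1-\zeta$ above $3$, where $E_{16n^2}$ has additive reduction and the local condition on cohomology classes genuinely depends on $n \pmod 9$. The remaining bookkeeping --- the splitting behaviour of $\ell_i$ in $K$ (equivalent to $\ell_i \bmod 3$) and the cubic residuacity between $\ell_1$ and $\ell_2$ --- reduces to a finite but intricate case analysis. Once the local images of the Kummer map are fully controlled and matched against the root number and the parity input, the conjecture follows.
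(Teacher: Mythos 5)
Your proposal is essentially the \emph{heuristic} that the paper itself records after stating the Generalised Sylvester's Conjecture, not the route the paper actually takes to establish Conjecture \ref{conjforrank2}. The paper never invokes root numbers or the parity conjecture in its proof: instead it computes $\dim_{\F_3}{\rm Sel}^{\phi}(E_{16n^2}/K)$ \emph{exactly} in every sub-case (Propositions \ref{sylvesterpq}--\ref{sylvesterpq4}, a long case analysis in the congruence classes of $\ell_1,\ell_2,n$ modulo $9$ and in the cubic residue symbols), and then feeds these exact dimensions into Lemma \ref{rankbound}, whose parity statement is purely algebraic --- it comes from the exact sequence of \cite[Prop.~49]{bes} relating ${\rm Sel}^3(E_{16n^2}/\Q)$ to ${\rm Sel}^{\phi_n}$ and ${\rm Sel}^{\widehat{\phi}_n}$, together with the evenness of $\dim_{\F_3}R$ and the hypothesis that $\dim_{\F_3}\Sh(E_{16n^2}/\Q)[3]$ is even. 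Your route (Corollary \ref{corforrk} for the upper bound, the Birch--Stephens root number plus a parity input for $r_n \bmod 2$) does reproduce the six candidate sets of the conjecture, and the arithmetic of the bounds and parities checks out in each branch; but it is a genuinely different, and strictly coarser, argument. What the paper's heavier computation buys is precisely what your route cannot give: exact Selmer dimensions sensitive to cubic residues, hence the unconditional rank-zero results of Theorem \ref{thmtosylvester} and the sharper conditional statements of Theorems \ref{thmtosylvesterrk1}--\ref{thmtosylvesterrk4}; what your route buys is brevity, at the cost of an extra analytic input.

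One caveat on your step (b): the parity conjecture for the Mordell--Weil rank is itself open, so as written your argument is conditional on an unproven statement rather than on the paper's hypothesis. To put it on the same footing as the paper you should replace it by the $3$-parity theorem for the $3^\infty$-Selmer corank (Nekov\'a\v{r} for CM curves, Dokchitser--Dokchitser in general), and then transfer Selmer-corank parity to Mordell--Weil rank parity using the hypothesis that $\dim_{\F_3}\Sh(E_{16n^2}/\Q)[3]$ is even: since the Cassels--Tate pairing is alternating and nondegenerate on $\Sh$ modulo its divisible part, this evenness is equivalent to evenness of the $3$-corank of $\Sh(E_{16n^2}/\Q)$, which is what the transfer actually requires. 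You gesture at this substitution but do not spell it out; with that step made explicit, your proposal is a correct conditional proof of the conjecture, under the same $\Sh[3]$ hypothesis as the paper, by a different mechanism.
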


To prove Conjecture \ref{conjforrank2}, we study the \begin{small}$\F_3$\end{small}-dimension of the \begin{small}$\phi$\end{small}-Selmer group of \begin{small}$E_{16n^2}$\end{small} over \begin{small}$K$\end{small} and prove that it has the same parity as \begin{small}$k_2$\end{small} (resp. \begin{small}$k_2+1$\end{small}) provided \begin{small}$n \not\equiv \pm 1 \pmod 9$\end{small} (resp. \begin{small}$n \equiv \pm 1 \pmod 9$\end{small}). Now a simple application of Lemma \ref{rankbound} proves Conjecture \ref{conjforrank2} (assuming \begin{small}$\dim_{\F_3} \Sh(E_{16n^2}/\Q)[3]$\end{small} is even). We remark that the Tate-Shafarevich conjecture predicts that \begin{small}$\Sh(E_{16n^2}/\Q)$\end{small} is finite, which in turn implies that \begin{small}$\dim_{\F_3} \Sh(E_{16n^2}/\Q)[3]$\end{small} is even (see \cite[Remark~5.13]{jms} for a more detailed discussion). 

To compute \begin{small}$\dim_{\F_3} {\rm Sel}^\phi(E_{16n^2}/K)$\end{small} explicitly in all possible scenarios, we broadly divide them in three cases, namely \begin{small}
    $k_1=0$, $k_1=1$ \text{ and } $k_1=2$.
\end{small} The computations for each of these cases are quite involved and challenging. From these computations we observe that \begin{small}$\dim_{\F_3} {\rm Sel}^\phi(E_{16n^2}/K)$\end{small} depends on equivalence classes of \begin{small}$\ell_1, \ell_2, n$\end{small} modulo \begin{small}$9$\end{small}, as well as on certain  cubic residues (if \begin{small}$k_1 >0$\end{small}). See \S\ref{sec2} for precise statements. Finally, Lemma \ref{rankbound} provides us with bounds on \begin{small}$\text{rk } E_{16n^2}(\Q)$\end{small} and we show that our bounds on \begin{small}$\text{rk } E_{16n^2}(\Q)$\end{small} are sharp. See \S\ref{rankcomp} for precise statements.

One of the consequences of our explicit computation of the Selmer groups is that it leads us to obtain the following unconditional result: 
\begin{theorem*}\label{rank0}
    Let \begin{small}$n$\end{small} be a cube-free integer co-prime to $3$ having  exactly two distinct prime factors \begin{small}$\ell_1$\end{small} and \begin{small}$\ell_2$.\end{small} We have \begin{small}$\text{rk } E_{-432n^2}(\Q)=0$\end{small} i.e. \begin{small}$n$\end{small} is not a rational cube sum in the following cases: 
    \begin{enumerate}
        \item \begin{small}$n=\ell_1\ell_2$\end{small} or \begin{small}$n=\ell_1^2\ell_2^2$\end{small} with \begin{small}$\ell_1 \equiv 2 \pmod 9$\end{small} and \begin{small}$\ell_2 \equiv 5 \pmod 9$,\end{small}

        \item \begin{small}$n=\ell_1^2\ell_2$\end{small} with \begin{small}$\ell_1 \equiv \ell_2 \equiv 2,5 \pmod 9$\end{small},
        
        \item \begin{small}$n=\ell_1\ell_2$\end{small} or \begin{small}$n=\ell_1^2\ell_2$\end{small} with  \begin{small}$n \equiv 2,5 \pmod 9$\end{small} where \begin{small}$\ell_1 \equiv 1 \pmod 3, \ell_2 \equiv 2 \pmod 3$\end{small} and \begin{small}$\ell_2$\end{small} is not a cube in \begin{small}$\F_{\ell_1}$\end{small},

        \item \begin{small}$n=\ell_1^2\ell_2^2$\end{small} or \begin{small}$n=\ell_1\ell_2^2$\end{small} with  \begin{small}$n \equiv 4,7 \pmod 9$\end{small} where \begin{small}$\ell_1 \equiv 1 \pmod 3, \ell_2 \equiv 2 \pmod 3$\end{small} and \begin{small}$\ell_2$\end{small} is not a cube in \begin{small}$\F_{\ell_1}$\end{small}.    
    \end{enumerate} 
\end{theorem*}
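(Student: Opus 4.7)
The plan is to prove all four cases by showing that the 3-isogeny Selmer group ${\rm Sel}^\phi(E_{16n^2}/K)$ over $K=\Q(\zeta)$, with respect to the $K$-isogeny $\phi$ of \eqref{eq:defofphi}, has $\F_3$-dimension $0$. Lemma~\ref{rankbound} will then force $\text{rk } E_{16n^2}(\Q)=0$ and, via the rational 3-isogeny $\phi_n$, $\text{rk } E_{-432n^2}(\Q)=0$, showing that $n$ is not a rational cube sum. Note that because the Selmer bound already yields $0$, no hypothesis on $\dim_{\F_3}\Sh(E_{16n^2}/\Q)[3]$ is needed and the resulting statement is unconditional.

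The computation realises ${\rm Sel}^\phi(E_{16n^2}/K)$ as the subgroup of $K(S_n\cup\{\p\},3) \subset K^\times/K^{\times 3}$ cut out by local conditions at each place of $K$. The first step is to pin down the local image $\text{Im}(\delta_v)$ at every bad place using the explicit formulas for $\phi$: at inert primes above $\ell\equiv 2 \pmod 3$ dividing $n$, at pairs of split primes above $\ell\equiv 1 \pmod 3$ dividing $n$, and at the wild prime $\p$ above $3$. In cases (1) and (2) both primes dividing $n$ are inert in $K$, and the sharper restrictions $\ell_i\equiv 2,5\pmod 9$ produce a system of cubic-residue relations among $\ell_1,\ell_2,\p$ (viewed modulo cubes in $K$) whose only solution inside the ambient group $K(S_n\cup\{\p\},3)$ is the trivial class. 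In cases (3) and (4), the split prime $\ell_1$ contributes a non-trivial local image, but the hypothesis that $\ell_2$ is not a cube in $\F_{\ell_1}$ translates, via the splitting $\ell_1 = \q\overline{\q}$ in $K$, to the non-triviality of the cubic residue symbol $\big(\tfrac{\ell_2}{\q}\big)_3$; this obstruction eliminates every candidate non-trivial global class and again gives $\dim_{\F_3}{\rm Sel}^\phi(E_{16n^2}/K)=0$.

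The main obstacle is the delicate local computation at the prime $\p$ above~$3$: the dimension of $\text{Im}(\delta_\p)$ is sensitive to whether $n\equiv\pm 1\pmod 9$, and requires a careful study of the formal group of $E_{16n^2}$ at $\p$ together with the action of $\phi$ on its tangent space. This is precisely what distinguishes the sub-dichotomies $n\equiv\pm 1\pmod 9$ versus $n\not\equiv\pm 1\pmod 9$ appearing in the four cases. Once the local images are pinned down, the remaining argument is a linear-algebra computation over $\F_3$ inside $K(S_n\cup\{\p\},3)$; cubic reciprocity in $K$ is used to translate the hypotheses of the theorem (on $\ell_i\bmod 9$ and on $\ell_2$ being a non-cube in $\F_{\ell_1}$) into incompatibilities that force the intersection of local images to be trivial, completing the proof.
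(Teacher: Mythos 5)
Your overall strategy---computing ${\rm Sel}^\phi(E_{16n^2}/K)$ by intersecting local conditions inside a group of $S$-units modulo cubes, using the local image at $\p$ (sensitive to $n\bmod 9$) and cubic residue symbols at the primes dividing $n$, and then invoking Lemma~\ref{rankbound}---is the same as the paper's (Props.~\ref{sylvesterpq}, \ref{sylvesterp2q}, \ref{sylvesterpq2} feeding into Theorem~\ref{thmtosylvester}). But your target dimension is wrong, and the step ``show $\dim_{\F_3}{\rm Sel}^\phi(E_{16n^2}/K)=0$'' cannot be carried out: the rational $3$-torsion point $(0,4n)\in E_{16n^2}(\Q)$ satisfies $\delta_{\phi,K_\q}\big((0,4n)\big)=(\overline{n}^2,\overline{n})$ for every place $\q$, so $(\overline{n}^2,\overline{n})$ lies in every local image and hence in the Selmer group; since $n\in\{\ell_1\ell_2,\ell_1^2\ell_2,\ell_1\ell_2^2,\ell_1^2\ell_2^2\}$ is never a cube in $K$, this class is non-trivial and $\dim_{\F_3}{\rm Sel}^\phi(E_{16n^2}/K)\ge 1$ always. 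Equivalently, in the identity $\text{rk }E_{16n^2}(\Q)=\dim_{\F_3}{\rm Sel}^\phi(E_{16n^2}/K)-\dim_{\F_3}E_{16n^2}(K)[\phi]-\dim_{\F_3}\Sh(E_{16n^2}/K)[\phi]$ one has $\dim_{\F_3}E_{16n^2}(K)[\phi]=1$, so a trivial $\phi$-Selmer group over $K$ is impossible. Your claim that the cubic-residue constraints kill \emph{every} non-trivial class in cases (1)--(4) therefore fails: the class $(\overline{n}^2,\overline{n})$ survives all local conditions, and no local analysis can remove it.

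What the paper actually proves in these four cases is $\dim_{\F_3}{\rm Sel}^\phi(E_{16n^2}/K)=1$, i.e.\ the Selmer group is exactly $\big\langle(\overline{n}^2,\overline{n})\big\rangle$; unconditional rank $0$ then follows from Lemma~\ref{rankbound} in the form $\text{rk }E_{16n^2}(\Q)\le t-1$, the $-1$ absorbing the $\phi$-torsion, with no hypothesis on $\Sh$ because only the inequality (not the parity statement) is used. So your observation that the result is unconditional is correct, but for the wrong numerical reason; to repair the plan you must change the target from $t=0$ to $t=1$ and verify that each of the remaining generators of $\big(A_{S_n}^*/A_{S_n}^{*3}\big)_{N=1}$ modulo $\big\langle(\overline{n}^2,\overline{n})\big\rangle$ (namely the classes built from $\zeta$, $\ell_i$, $\pi_{\ell_1}$, and their products) is excluded by a local condition at $\p$, at the inert prime, or at one of the split primes, exactly as in Props.~\ref{sylvesterpq}, \ref{sylvesterp2q} and \ref{sylvesterpq2}. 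A further small slip: by Lemma~\ref{selmerbound} the Selmer group already lies in $\big(A_{S_n}^*/A_{S_n}^{*3}\big)_{N=1}$ with $S_n$ consisting only of the primes of $K$ dividing $n$; the ambient group you propose, built from $S_n\cup\{\p\}$-units, is larger than necessary because the image of $\delta_{\phi,K_\p}$ consists of unit classes, so $\p$-adic valuations never enter.
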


Under the assumption \begin{small}$\dim_{\F_3} \Sh(E_{16n^2}/\Q)[3]$\end{small} is even, we also provide a criterion for natural numbers \begin{small}$n \in \{\ell_1\ell_2, \thinspace \ell_1^2\ell_2^2, \thinspace \ell_1^2\ell_2, \thinspace \ell_1\ell_2^2\}$\end{small} for which \begin{small}$\text{rk } E_{16n^2}(\Q)=1$\end{small} (cf. Theorem \ref{thmtosylvesterrk1}) in terms of congruences modulo $9$ and cubic residues.

The article is structured as follows: In \S\ref{prelims}, we discuss the basic set-up, recall some definitions and results from \cite{jms} which are used subsequently. Further, we explore the relationship between dimension of \begin{small}$\phi$\end{small}-Selmer group over \begin{small}$K$\end{small} and rank of elliptic curve over \begin{small}$\Q$.\end{small} This is done in Lemma \ref{rankbound}, which is one of the key components of the paper. In \S\ref{sec2}, we restrict ourselves to cube-free natural numbers co-prime to $3$ with exactly two distinct prime divisors and give an explicit description of the \begin{small}$\phi$\end{small}-Selmer group of \begin{small}$E_{16n^2}/K$\end{small} through computing its generators. Finally in \S\ref{rankcomp}, using Lemma \ref{rankbound}, we relate \begin{small}$\dim_{\F_3} {\rm Sel}^\phi(E_{16n^2}/K)$\end{small} with \begin{small}$\text{rk } E_{16n^2}(\Q)$,\end{small} thereby comment about the solubility of \begin{small}$n$\end{small} as a sum of two rational cubes. \\

\begin{small} {\bf Acknowledgements:} It is a pleasure to thank Prof. Somnath Jha for his encouragement, advice, remarks and several e-mail communications. \\
Both authors were partially supported by  MHRD SPARC grant number 445. 
P. Shingavekar was partially supported by the MHRD grant SB20210807PHMHRD008128.\end{small}

\section{Basic set up }\label{prelims}
First, we fix the notation that we will be using throughout this paper. Let \begin{small}$K=\Q(\zeta)$\end{small} and \begin{small}$\OO_K=\Z[\zeta]$\end{small} denote its ring of integers. Further, let \begin{small}$\Sigma_K$\end{small} be the set of all finite primes of \begin{small}$K$\end{small} and \begin{small}$K_\q$\end{small} denote the completion at \begin{small}$\q \in \Sigma_K$\end{small} of \begin{small}$K$\end{small}.  If we set \begin{small}$\p=1-\zeta$\end{small}, then \begin{small}$(\p)$\end{small} is the unique prime of \begin{small}$K$\end{small} dividing $3$. We denote the ideal \begin{small}$(\p)$\end{small} and its uniformizer \begin{small}$\p$\end{small} both by \begin{small}$\p$\end{small} itself.

There is a norm map \begin{small}$\overline{N}: \frac{K^*}{K^{*3}} \times \frac{K^*}{K^{*3}} \to \frac{K^*}{K^{*3}}$\end{small} given by  \begin{small}$(\overline{x}_1,\overline{x}_2) \mapsto \overline{x}_1\overline{x}_2$.\end{small} We denote the kernel of this norm map by \begin{small}$\big(K^*/K^{*3} \times K^*/K^{*3}\big)_{N=1}$\end{small}. The  map \begin{small}$\overline{N}$\end{small} induces a norm map \begin{small}$\overline{N}:\frac{\OO_K^*}{\OO_K^{*3}} \times \frac{\OO_K^*}{\OO_K^{*3}} \to \frac{\OO_K^*}{\OO_K^{*3}}$.\end{small}
We recall the following from \cite[Definitions~2.1 and ~3.13]{jms}:
\begin{defn}\label{defnofAq}
 We define the following subgroup of \begin{small}${\OO_K^*}/{\OO_K^{*3}} \times {\OO_K^*}/{\OO_K^{*3}}$:\end{small} 
 \begin{small}$$\Big(\frac{A^*}{A^{*3}}\Big)_{N=1}:=\Big\{ (\overline{x}_1,\overline{x}_2) \in \frac{\OO_K^*}{\OO_K^{*3}} \times \frac{\OO_K^*}{\OO_K^{*3}} \text{ } \Big| \quad x_1x_2 \in \OO_K^{*3}, \text{ where } x_i \text{ is any lift of } \overline{x}_i \text{ for } i=1,2 \Big\}.$$\end{small}
The group 
\begin{small}$\big(A_\q^*/A_\q^{*3}\big)_{N=1}$\end{small} is defined similarly by replacing \begin{small}$\OO_K$\end{small} with  \begin{small}$\OO_{K_\q}$\end{small}, the ring of integers of \begin{small}${K_\q}$\end{small}, in the above definition.
\end{defn}

\begin{defn}\label{defofSa}
    Given a cube-free integer \begin{small}$n$,\end{small} we define  
    \begin{small}$S_{n}:=\{\q \in \Sigma_K \text{ } \big| \quad \q \mid n\}.$\end{small}
\end{defn}

\noindent By \begin{small}$\OO_{K,S_n}$\end{small} we denote the ring of \begin{small}$S_n$\end{small}-integers of \begin{small}${K}$\end{small} and  define, in a similar fashion as in the Definition \ref{defnofAq}, the group \begin{small}$\Big(\frac{A_{S_{n}}^*}{A_{S_{n}}^{*3}}\Big)_{N=1}:=\Big(\frac{\OO_{K,S_{n}}^*}{\OO_{K,S_{n}}^{*3}} \times \frac{\OO_{K,S_{n}}^*}{\OO_{K,S_{n}}^{*3}}\Big)_{N=1}$\end{small}.\\

Recall that \begin{small}$\p \in \Sigma_K$\end{small} is the unique of \begin{small}$K$\end{small} above $3$. For the local field \begin{small}$K_\p$\end{small} and its uniformizer \begin{small}$\p$\end{small}, we set \begin{small}$U^m_{K_\p}:=1+\p^m\OO_{K_\p}$.\end{small} Recall the following result from \cite[Lemma~2.7]{jms}:
\begin{lemma}\label{V3U1U4}
We have \begin{small}$\Big(\frac{A_\p^*}{A_\p^{*3}}\Big)_{N=1} \cong \frac{U^1_{K_\p}}{U^4_{K_\p}}$\end{small} and \begin{small}$\frac{V_3}{A_\p^{*3}} \cong \frac{U^3_{K_\p}}{U^4_{K_\p}}$,\end{small} where \begin{small}$V_3 \subset \OO_{K_\p}^* \times \OO_{K_\p}^*$\end{small} is defined
\begin{scriptsize}
         $$V_3:=\{(u_1,u_2) \in \OO_{K_\p}^* \times \OO_{K_\p}^* \mid \overline{u_i} \in \OO_{K_\p}^*/U^3_{K_\p} \text{ satisfies } \overline{u_i}=\alpha_i^3 \text{ for some } \alpha \in \OO_{K_\p}^*/U^3_{K_\p}, i=1,2 \text{ and } u_1u_2 \in \OO_{K_\p}^{*3} \}.$$
    \end{scriptsize}
\end{lemma}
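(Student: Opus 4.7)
The plan is to reduce both isomorphisms to the single identity $(U^1_{K_\p})^3 = U^4_{K_\p}$ and then track cubes through the Teichm\"uller decomposition $\OO_{K_\p}^* = \{\pm 1\} \times U^1_{K_\p}$. Throughout I use that $K_\p/\Q_3$ is totally ramified of degree $2$ with uniformizer $\p = 1-\zeta$, residue field $\F_3$, and $v_\p(3) = 2$; explicitly $3 = -\zeta^2 \p^2$, since $(1-\zeta)(1-\zeta^2) = 3$.

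The key technical step is $(U^1_{K_\p})^3 = U^4_{K_\p}$. For the inclusion $(U^1)^3 \subseteq U^4$, I write $u = 1 + a\p$ with $a \in \OO_{K_\p}$ and expand $u^3 = 1 + 3a\p + 3a^2\p^2 + a^3\p^3$; reducing modulo $\p^4$ and substituting $3 = -\zeta^2 \p^2$ leaves $1 + (a^3 - \zeta^2 a)\p^3$, and since $\zeta \equiv 1 \pmod \p$ and $a^3 \equiv a \pmod \p$ for every $a \in \OO_{K_\p}/\p = \F_3$, the cubic term vanishes modulo $\p$. For the reverse inclusion I use the $3$-adic exponential, which yields an isomorphism $\exp : \p^m \OO_{K_\p} \isomto U^m_{K_\p}$ for $m \geq 2$ (since $e/(p-1) = 1$ here); cubing corresponds to multiplication by $3 = -\zeta^2\p^2$, which sends $\p^2 \OO_{K_\p}$ bijectively onto $\p^4 \OO_{K_\p}$, so $(U^2)^3 = U^4$.

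Combining $(U^1)^3 = U^4$ with the bijectivity of cubing on $\{\pm 1\}$ gives $\OO_{K_\p}^*/\OO_{K_\p}^{*3} \cong U^1_{K_\p}/U^4_{K_\p}$. For the first isomorphism I define $\OO_{K_\p}^*/\OO_{K_\p}^{*3} \to (A_\p^*/A_\p^{*3})_{N=1}$ by $\bar x \mapsto (\bar x, \overline{x^{-1}})$, which lands in the norm-one subgroup and is injective; surjectivity holds because any $(\bar y_1, \bar y_2)$ in the target satisfies $y_1 y_2 \in \OO_{K_\p}^{*3}$, forcing $\bar y_2 = \overline{y_1^{-1}}$.

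For the second isomorphism I decode the condition that each $\bar u_i$ is a cube in $\OO_{K_\p}^*/U^3_{K_\p}$. Since $(U^1)^3 = U^4 \subseteq U^3$, cubing is trivial on $U^1/U^3$, so the cubes in $\OO_{K_\p}^*/U^3_{K_\p} = \{\pm 1\} \times U^1/U^3$ form exactly $\{\pm 1\}$; hence $u_i \in \pm U^3_{K_\p}$ for $i=1,2$. Passing through the isomorphism built above, the image of $V_3/A_\p^{*3}$ in $U^1/U^4$ becomes the image of $\pm U^3_{K_\p}$, which collapses to $U^3_{K_\p}/U^4_{K_\p}$ since $-1 \in \OO_{K_\p}^{*3}$. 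The main obstacle is the sharp identity $(U^1_{K_\p})^3 = U^4_{K_\p}$, one step finer than the generic bound $(U^1)^3 \subseteq U^3$ expected from $v_\p(3) = 2$; it rests on the coincidences that $\zeta \equiv 1 \pmod \p$ and $a^3 \equiv a$ on $\F_3$, which together force the extra cancellation in the $\p^3$-coefficient of $u^3$.
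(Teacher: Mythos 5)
Your proof is correct and follows essentially the same route as the paper: both arguments hinge on the identity $(U^1_{K_\p})^3=U^4_{K_\p}$ and then transport everything through a diagonal-type isomorphism onto the norm-one subgroup (your map $\overline{x}\mapsto(\overline{x},\overline{x^{-1}})$ agrees with the paper's $\overline{s}\mapsto(\overline{s}^2,\overline{s})$ modulo cubes, up to swapping coordinates). The only difference is that you supply the verification of $(U^1_{K_\p})^3=U^4_{K_\p}$ and the explicit identification of $V_3$ with pairs from $\pm U^3_{K_\p}$, details the paper simply asserts.
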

\begin{proof}
    Note that \begin{small}$\big(U^1_{K_\p}\big)^3=U^4_{K_\p}$.\end{small} The map \begin{small}$f: \frac{U^1_{K_\p}}{U^4_{K_\p}} \to \Big(\frac{A_\p^{*}}{A_\p^{*3}}\Big)_{N=1}$\end{small} given by \begin{small}$f(\overline{s}) =(\overline{s}^2, \overline{s})$\end{small} is then an isomorphism. The same map identifies \begin{small}$\frac{U^3_{K_\p}}{U^4_{K_\p}}$\end{small} with \begin{small}$\frac{V_3}{A_\p^{*3}}$.\end{small}
\end{proof}

Further, recall from \cite[Prop.~3.4]{jms} that for the elliptic curve \begin{small}$E_{16n^2}:y^2=x^3+16n^2$\end{small} and a prime \begin{small}$\q \in \Sigma_K$\end{small}, the local Kummer map \begin{small}$\delta_{\phi,K_\q}:E_{16n^2}(K_\q) \to \big({K_\q^*}/{K_\q^{*3}} \times {K_\q^*}/{K_\q^{*3}}\big)_{N=1}$\end{small}, corresponding to the isogeny \begin{small}$\phi$\end{small} in \eqref{eq:defofphi}, is defined as follows: For \begin{small}$P=(x(P),y(P)) \in E_{16n^2}(K_\q)$,\end{small}
\begin{small}\begin{equation}\label{eq:kummap}
    \delta_{\phi,K_\q}(P):=\begin{cases} (\overline{1},\overline{1}), & \text{ if } P=O,\\(\overline{n}^2,\overline{n}), & \text{ if } P=(0,4n),\\ (\overline{n},\overline{n}^2), & \text{ if } P=(0,-4n),\\ (\overline{y(P)-4n}, \overline{y(P)+4n}), & \text{ if } P \notin E_{16n^2}(K_\q)[\phi].    \end{cases}
\end{equation}\end{small}

\begin{lemma}\label{imgofkummap}
    Let \begin{small}$\q \in \Sigma_K \setminus S_n$\end{small}. Then one has \cite[Props.~3.7, ~3.11 and ~3.12]{jms}
    \begin{enumerate}
        \item If \begin{small} $ \q \nmid 3$,\end{small} then \begin{small}$\text{Im } \delta_{\phi,K_\q}= \big(A_\q^*/A_\q^{*3}\big)_{N=1} $ \end{small} and \begin{small} $ |\text{Im } \delta_{\phi,K_\q}|=3$.\end{small} 
        \item If \begin{small} $ \q=\p \mid 3$,\end{small} then \begin{small}$V_3/A_\p^{*3} \subset \text{Im } \delta_{\phi,K_\p} \subset \big(A_\p^*/A_\p^{*3}\big)_{N=1} $ \end{small} and \begin{small} $ |\text{Im } \delta_{\phi,K_\p}|=9$.\end{small} 
    \end{enumerate}
    On the other hand, for \begin{small}$\q \in S_n$,\end{small} \begin{small}$\text{Im } \delta_{\phi,K_\q} = \langle(\overline{n}^2, \overline{n}) \rangle$\end{small}, hence \begin{small}$\text{Im } \delta_{\phi,K_\q} \cap \big(A_\q^*/A_\q^{*3}\big)_{N=1}=\{1\}$\end{small}. 
    \qed
\end{lemma}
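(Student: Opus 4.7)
The plan is to analyze the local Kummer image $\text{Im}\,\delta_{\phi,K_\q}$ separately in the three cases --- (1a) $\q \nmid 3n$, (1b) $\q = \p$, and (2) $\q \in S_n$ --- in each case first computing the order of the image and then identifying its generators concretely. For the order: since $E[\phi]$ is generated by the $K$-rational point $(0,4n)$, we have $|E[\phi](K_\q)|=3$ uniformly, and a standard local Tate duality / Euler characteristic computation for a self-isogeny of degree $3$ yields $|\text{Im}\,\delta_{\phi,K_\q}|=3$ for $\q \nmid 3$ and $|\text{Im}\,\delta_{\phi,K_\p}|=9$ for $\q=\p$, the extra factor at $\p$ reflecting that the residue characteristic equals $\deg\phi$.

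In case (1a), $E_{16n^2}$ has good reduction at $\q$, and a reduction/formal-group analysis of $y(P) \pm 4n$ in \eqref{eq:kummap} shows that both coordinates are units modulo cubes for every $P \in E(K_\q)$ (any non-trivial $\q$-valuation arises inside the formal group at $\q$ and is divisible by $3$). Thus $\text{Im}\,\delta_{\phi,K_\q} \subset (A_\q^*/A_\q^{*3})_{N=1}$, and a direct computation gives $|(A_\q^*/A_\q^{*3})_{N=1}| = |\mu_3(K_\q)| = 3$, forcing equality. In case (2), the image contains $\delta_{\phi,K_\q}(0, 4n) = (\overline{n}^2, \overline{n})$, which has exact order $3$ since $v_\q(n)\in\{1,2\}$ is coprime to $3$. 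For a non-torsion $P \in E(K_\q)$, the cuspidal reduction $y^2 \equiv x^3 \pmod \q$ at $\q \mid n$ forces $\delta_{\phi,K_\q}(P) \in \langle(\overline{n}^2, \overline{n})\rangle$ via a valuation analysis of $y(P)\pm 4n$, so equality follows from the order bound. The intersection $\text{Im}\,\delta_{\phi,K_\q} \cap (A_\q^*/A_\q^{*3})_{N=1}$ is trivial because $(\overline{n}^2, \overline{n})$ is not represented by a pair of units, the $\q$-valuation being non-zero modulo $3$.

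Case (1b) is the main obstacle and the most delicate step, because at $\p$ the residue characteristic equals $\deg\phi$, producing the larger image of order $9$. The inclusion $\text{Im}\,\delta_{\phi,K_\p} \subset (A_\p^*/A_\p^{*3})_{N=1}$ is shown by a formal-group analysis of $y(P) \pm 4n$, using $\p \nmid n$ to control the $\p$-valuations modulo $3$ on the Weierstrass model. For the inclusion $V_3/A_\p^{*3} \subset \text{Im}\,\delta_{\phi,K_\p}$, I would invoke Lemma \ref{V3U1U4}: under the isomorphism $V_3/A_\p^{*3} \cong U^3_{K_\p}/U^4_{K_\p}$ given by $s \mapsto (s^2, s)$, it suffices to produce non-torsion points $P \in E(K_\p)$ whose Kummer image $(\overline{y(P)-4n}, \overline{y(P)+4n})$ is congruent modulo cubes to such a pair with $s \in U^3_{K_\p}$. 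This is accomplished by Hensel-lifting suitable approximate solutions of the Weierstrass equation $y^2 = x^3 + 16n^2$ modulo a sufficiently high power of $\p$, starting from a seed near the torsion point $(0,4n)$.
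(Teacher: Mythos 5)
The paper does not actually prove this lemma: it is imported wholesale from \cite[Props.~3.7, 3.11, 3.12]{jms} (hence the \qed attached to the statement), so your proposal is best judged as a reconstruction of that cited argument --- and in outline it is the right one. Your three ingredients are exactly the standard ones: the order counts via the local cokernel/Euler-characteristic formula for a degree-$3$ self-isogeny (for a self-isogeny the Tamagawa ratio is $1$, so the image has order $\#E_{16n^2}(K_\q)[\phi]=3$ away from $3$, and order $9$ at $\p$ once one checks the leading coefficient $\alpha$ of $\phi$ on the formal group has $v_\p(\alpha)=1$, e.g.\ because $\phi$ and its dual are complex conjugate and $v_\p(3)=2$); the valuation analysis of $(y-4n)(y+4n)=x^3$ to show the image lies in unit classes satisfying the norm condition; and the torsion point $(0,4n)$ together with the order bound to pin down the image at $\q\in S_n$ and the triviality of its intersection with unit classes since $v_\q(n)\in\{1,2\}$.

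Two steps need tightening, though neither is fatal. First, at $\q\mid 2$ (which is a good-reduction prime here but not visibly so on the model $y^2=x^3+16n^2$) your parenthetical "any non-trivial valuation arises inside the formal group" is not literally true: one can have $v_\q(y-4n)=3$ for a point outside the formal group, because the difference $8n$ of the two coordinates is not a unit there. The claim that both valuations are divisible by $3$ survives, but the clean way to see it is to pass to the integral model $y'^2+ny'=x'^3$ (equivalently, to use that $8$ is a cube), where $y\mp 4n=8y'$, $8(y'+n)$ and $y'(y'+n)=x'^3$ with $y'-(y'+n)$ a unit. Second, at $\p$ your Hensel-lifted points near $(0,4n)$ do not have Kummer image of the form $(\overline{s}^2,\overline{s})$ with $s\in U^3_{K_\p}$: a point with $v_\p(x)=1$ has $y-4n=x^3/(y+4n)$ and $y+4n=8n\,u$ with $u\in U^3_{K_\p}\setminus U^4_{K_\p}$, so its image is $(\overline{n}^2\overline{u}^2,\overline{n}\,\overline{u})$, i.e.\ it lies in the coset $(\overline{n}^2,\overline{n})\cdot V_3/A_\p^{*3}$, and for $n\not\equiv\pm1\pmod 9$ the factor $\overline{n}$ is non-trivial. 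The inclusion $V_3/A_\p^{*3}\subset \text{Im }\delta_{\phi,K_\p}$ then follows only after multiplying by the inverse of the class $\delta_{\phi,K_\p}(0,4n)=(\overline{n}^2,\overline{n})$, using that the image is a subgroup; with that one-line fix your construction does produce the required $U^3_{K_\p}/U^4_{K_\p}$-classes, and the rest of the sketch goes through.
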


Using the explicit description \eqref{eq:kummap} of the Kummer map, we now compute  \begin{small}$\text{Im } \delta_{\phi,K_\p}$\end{small}:
\begin{lemma}\label{imgofdeltaatp}
    Let \begin{small}$n$\end{small} be a cube-free integer co-prime to $3$. We have
    \begin{enumerate}
    \item For \begin{small}$n \not\equiv \pm 1 \pmod 9,$\end{small}         \begin{small}$\text{Im } \delta_{\phi,K_\p} =           \big\langle(\overline{n}^2,\overline{n}),   (\overline{(1+\p^3)^2},\overline{(1+\p^3)}) \big\rangle.$\end{small}  Here, \begin{small}$\text{Im } \delta_{\phi,K_\p} \cong \frac{U^2_{K_\p}}{U^4_{K_\p}}$.\end{small} As a consequence, we see that \begin{small}$(\overline{\zeta^2},\overline{\zeta}) \notin \text{Im } \delta_{\phi,K_\p}$.\end{small}
    
    \item For \begin{small}$n \equiv \pm 1 \pmod 9,$\end{small}
         \begin{small}$\text{Im }\delta_{\phi,K_\p}=\big\langle(\overline{\zeta^2},\overline{\zeta}),
         (\overline{(1+\p^3)^2},\overline{(1+\p^3)}) \big\rangle.$\end{small} In this situation,\\ \begin{small}$\text{Im } \delta_{\phi,K_\p} \cap f\big(\frac{U^2_{K_\p}}{U^4_{K_\p}}\big) \cong \frac{U^3_{K_\p}}{U^4_{K_\p}}$,\end{small} where \begin{small}$f$\end{small} is the isomorphism from Lemma \ref{V3U1U4}.
    
    \end{enumerate}
\end{lemma}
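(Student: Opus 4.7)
The plan is to decode the image via the isomorphism $f: U^1_{K_\p}/U^4_{K_\p} \to (A_\p^*/A_\p^{*3})_{N=1}$ of Lemma \ref{V3U1U4}, using the size bound $|\text{Im } \delta_{\phi,K_\p}| = 9$ and the containment $W := V_3/A_\p^{*3} \subseteq \text{Im } \delta_{\phi,K_\p}$ from Lemma \ref{imgofkummap}. A routine binomial computation with $3 = -\zeta^2 \p^2$ and $a^3 \equiv a \pmod 3$ shows $(U^1_{K_\p})^3 \subseteq U^4_{K_\p}$, so $U^1_{K_\p}/U^4_{K_\p}$ is elementary abelian of order $27$ with filtration $0 \subset U^3/U^4 \subset U^2/U^4 \subset U^1/U^4$ whose graded pieces are generated respectively by $\overline{1+\p^3}$, $\overline{1+\p^2}$, and $\overline{\zeta} = \overline{1-\p}$. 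Writing $n = \epsilon u$ with $\epsilon \in \{\pm 1\}$ and $u \in 1 + 3\Z_3 \subseteq U^2_{K_\p}$, and using $(\pm 1)^3 = \pm 1$, one has $\overline{n} = \overline{u}$ in $\OO_{K_\p}^*/\OO_{K_\p}^{*3}$; using $9 = \zeta\p^4$, one sees that $u \in U^3_{K_\p}$ iff $n \equiv \pm 1 \pmod 9$, in which case $u \in U^4_{K_\p}$ and $\overline{u}$ is trivial.

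For case (1), $n \not\equiv \pm 1 \pmod 9$, the Kummer image $(\overline{n}^2, \overline{n}) = f(\overline{u})$ of the 3-torsion point $(0,4n)$ lies in $f(U^2/U^4) \setminus W$. Since $\text{Im } \delta_{\phi,K_\p}$ has order $9$ and contains both $W = f(U^3/U^4)$ and an element outside $W$, it equals the unique $\F_3$-plane spanned by them, namely $f(U^2/U^4)$; the stated generators follow because $f(\overline{1+\p^3})$ generates $W$. Finally $\zeta = 1 - \p \in U^1 \setminus U^2$, hence $f(\overline{\zeta}) \notin f(U^2/U^4)$, establishing $(\overline{\zeta^2}, \overline{\zeta}) \notin \text{Im } \delta_{\phi,K_\p}$.

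For case (2), $n \equiv \pm 1 \pmod 9$, the 3-torsion contribution is trivial, and the main task is to exhibit a non-torsion $K_\p$-point whose Kummer image is outside $W$. The strategy is to use that $n$ itself is a cube in $K_\p^*$ in this range (since $(1 + 3\Z_3)^3 = 1 + 9\Z_3$, giving $n \in \{\pm 1\}(1 + 3\Z_3)^3 \subseteq K_\p^{*3}$): pick $m \in K_\p^*$ with $m^3 = n$ and take
\[ P := \bigl(-4m^2,\ 4n\sqrt{-3}\bigr) \in E_{16n^2}(K_\p), \]
where $\sqrt{-3} = 2\zeta + 1 \in K$. The equality $y(P)^2 = -48n^2 = (-4m^2)^3 + 16n^2$ confirms $P$ is on $E_{16n^2}$, and $P \notin E_{16n^2}[\phi]$ since $\sqrt{-3} \neq \pm 1$. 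Applying \eqref{eq:kummap} with $\sqrt{-3} - 1 = 2\zeta$, $\sqrt{-3} + 1 = -2\zeta^2$, and using that $8$, $-8$, $n$ are all cubes in $K_\p^*$ here, one computes
\[ \delta_{\phi,K_\p}(P) = \bigl(\overline{8n\zeta},\, \overline{-8n\zeta^2}\bigr) = (\overline{\zeta},\, \overline{\zeta^2}) = f(\overline{\zeta})^{-1}, \]
which lies outside $W$. Combined with $W \subseteq \text{Im } \delta_{\phi,K_\p}$ and the order-$9$ constraint, this forces $\text{Im } \delta_{\phi,K_\p} = \langle f(\overline{\zeta}), f(\overline{1+\p^3})\rangle$, and its intersection with $f(U^2/U^4)$ reduces to $W \cong U^3_{K_\p}/U^4_{K_\p}$ because $f(\overline{\zeta}) \notin f(U^2/U^4)$.

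The principal obstacle is guessing the point $P$ in case (2); once $n$ is recognized as a cube in $K_\p^*$, the verification is an immediate algebraic manipulation, and the rest of the argument is careful bookkeeping inside the filtered unit group $U^1_{K_\p}/U^4_{K_\p}$.
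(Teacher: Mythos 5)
Your proposal is correct, and case (1) follows the paper's own argument essentially verbatim: use the torsion point $(0,4n)$, note $\overline{n}$ lands in $U^2_{K_\p}\setminus U^3_{K_\p}$ when $n\not\equiv\pm1\pmod 9$, and combine with $V_3/A_\p^{*3}\subset \text{Im }\delta_{\phi,K_\p}$ and $|\text{Im }\delta_{\phi,K_\p}|=9$.

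In case (2), however, you take a genuinely different route. The paper produces a local point with $x$-coordinate $-1$: since $16n^2-1\equiv -3\pmod 9$, Hensel's lemma gives $16n^2-1=-3\alpha^2$ with $\alpha\in 1+3\Z_3$, and the Kummer image of $P=(-1,\p\zeta\alpha)$ is then pinned down only up to a case analysis of $\alpha$ modulo $\p^3$ (the three congruence cases for $\beta$), each branch showing the second coordinate is $\zeta^{\pm 2}$ times a cube times an element of $V_3$. You instead observe that $n\equiv\pm1\pmod 9$ forces $n\in\Z_3^{*3}$ (since $(1+3\Z_3)^3=1+9\Z_3$ and $\pm1$ are cubes), choose $m$ with $m^3=n$, and use the point $P=(-4m^2,\,4n\sqrt{-3})$, which visibly lies on $y^2=x^3+16n^2$ because $(-4m^2)^3+16n^2=-48n^2$; its Kummer image is computed exactly, with no congruence analysis, as $(\overline{8n\zeta},\overline{-8n\zeta^2})=(\overline{\zeta},\overline{\zeta^2})=f(\overline{\zeta})^{-1}$, since $8$, $-8$ and $n$ are cubes in $K_\p^*$. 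Together with $W=V_3/A_\p^{*3}=\langle f(\overline{1+\p^3})\rangle$ and the order-$9$ constraint this gives the stated generators, and the intersection statement follows since $\zeta,\zeta^2\in U^1_{K_\p}\setminus U^2_{K_\p}$. Your verification details are sound: the exclusion $P\notin E_{16n^2}[\phi]$ is just $x(P)\neq 0$ (equivalently your remark that $\sqrt{-3}\neq\pm1$), the element $(\overline{\zeta},\overline{\zeta^2})$ generates the same line as the paper's $(\overline{\zeta^2},\overline{\zeta})$, and the filtration bookkeeping ($3=-\zeta^2\p^2$, $9=\zeta\p^4$, $(U^1_{K_\p})^3\subseteq U^4_{K_\p}$) is accurate. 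What your approach buys is a cleaner, case-free computation that exhibits the generator $(\overline{\zeta^2},\overline{\zeta})$ on the nose; what the paper's approach buys is that it only needs the quadratic (Hensel) statement $16n^2-1\in-3(1+3\Z_3)^2$ rather than recognizing $n$ as a local cube, at the cost of the mod-$\p^4$ case analysis. Both are complete proofs of the lemma.
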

\begin{proof}
(1)  We have \begin{small}$n \equiv 2,4,5,7 \pmod 9$\end{small} and so \begin{small}$n^2 \in 1+3\Z_3 \subset U^2_{K_\p}$,\end{small} but \begin{small}$n^2 \notin  U^3_{K_\p}$.\end{small} Note that \begin{small}$(0,4n) \in E_{16n^2}(\Q)$\end{small} and \begin{small}$\delta_{\phi,K_\p}((0,4n))=\big(\overline{n}^2,\overline{n}\big).$\end{small} Since \begin{small}$\frac{V_3}{A_\p^{*3}}=\big\langle\big(\overline{(1+\p^3)^2},\overline{(1+\p^3)}\big)\big\rangle \subset \text{Im } \delta_{\phi,K_\p}$\end{small} and \begin{small}$\big|\text{Im } \delta_{\phi,K_\p}\big|=9$\end{small} by Lemma \ref{imgofkummap}, we get \begin{small}$\text{Im } \delta_{\phi,K_\p}= \big\langle \big(\overline{n}^2,\overline{n}\big), \big(\overline{(1+\p^3)^2},\overline{(1+\p^3)}\big) \big\rangle$.\end{small}
It is then easy to see that \begin{small}$\text{Im } \delta_{\phi,K_\p} \cong \frac{U^2_{K_\p}}{U^4_{K_\p}}$.\end{small}

\noindent (2) Now if \begin{small}$n \equiv \pm 1 \pmod 9$\end{small}, then \begin{small}$16n^2-1 \equiv -3 \pmod 9$.\end{small} Hence, there exists \begin{small}$\alpha \in 1 +3\Z_3 \subset U^2_{K_\p}$\end{small} such that \begin{small}$16n^2-1 = -3 \alpha^2$.\end{small} Thus, the point \begin{small}$P=(-1,\p\zeta\alpha)=(-1,\sqrt{16n^2-1}) \in E_{16n^2}(K_\p)$.\end{small} Observe that \begin{small}$\delta_{\phi,K_\p}(P)=\big(\overline{\sqrt{16n^2-1} - 4n},\overline{\sqrt{16n^2-1} + 4n}\big)= \big(\overline{\p\zeta\alpha - 4n},\overline{ \p\zeta\alpha+ 4n}\big). $ \end{small} Now \begin{small} $ \p\zeta\alpha+ 4n  \equiv  \p\zeta (1+ \p^2 \beta) \pm 4 \pmod{\p^4} $\end{small} as \begin{small}$\alpha = 1 + \p^2 \beta$\end{small} for some \begin{small}$\beta \in \Z_3$.\end{small} Consequently, \begin{small}$$\p\zeta\alpha+ 4n \equiv \begin{cases} \p \zeta  \pm 4 \equiv \sqrt{-3}  \pm 4 \equiv \pm \big( (1+\p^3)\zeta^2 \big)^{\pm 1}  \pmod{\p^4}, & \text{if } \beta \equiv 0 \pmod{\p}, \\ \p \zeta (1+ \p^2)  \pm 4 \equiv -2 \sqrt{-3}  \pm 4 \equiv \pm \big((1+\p^3)^2\zeta^2 \big)^{\pm 1} \pmod{\p^4}, & \text{if } \beta \equiv 1 \pmod{\p}, \\  \p \zeta(1-\p^2) \pm 4 \equiv \pm \frac{1}{2} - \frac{1}{2} \sqrt{-3}  \equiv \pm \big(\zeta^2)^{\pm 1}\pmod{\p^4}, & \text{if } \beta \equiv -1 \pmod{\p}.\end{cases}$$\end{small}

\noindent As \begin{small}$\frac{V_3}{A_\p^{*3}}=\big\langle\big(\overline{(1+\p^3)^2},\overline{(1+\p^3)}\big)\big\rangle \subset \text{Im } \delta_{\phi,K_\p}$\end{small} and \begin{small}$\big|\text{Im } \delta_{\phi,K_\p}\big|=9$\end{small} by Lemma \ref{imgofkummap}, we see that \begin{small}$\text{Im } \delta_{\phi,K_\p}= \big\langle \big(\overline{\zeta^2},\overline{\zeta}\big),\big(\overline{(1+\p^3)^2},\overline{(1+\p^3)}\big) \big\rangle$.\end{small} 
Note that \begin{small}$\zeta \notin U^2_{K_\p}$;\end{small} so \begin{small}$\text{Im } \delta_{\phi,K_\p} \cap f\big(\frac{U^2_{K_\p}}{U^4_{K_\p}}\big) \cong \frac{U^3_{K_\p}}{U^4_{K_\p}}$.\end{small}
\end{proof}

For the isogeny \begin{small}
    $\phi: E_{16n^2} \to E_{16n^2}$
\end{small}, we define the \begin{small}$\phi$-\end{small}Selmer group of \begin{small}$E_{16n^2}/K$\end{small} as a subgroup of \begin{small}
    $H^1(G_K, E_{16n^2}[\phi]) \cong \big(K^*/K^{*3} \times K^*/K^{*3}\big)_{N=1}$
\end{small}, which we denote by \begin{small}${\rm Sel}^\phi (E_{16n^2}/K)$,\end{small} as follows (cf. \cite[Eq.~(6)]{jms} for details): 
\begin{small}\begin{equation}\label{eq:defofsel}
    {\rm Sel}^\phi (E_{16n^2}/K):=\{(\overline{x}_1,\overline{x}_2) \in \big(K^*/K^{*3} \times K^*/K^{*3}\big)_{N=1} \mid (\overline{x}_1,\overline{x}_2) \in \text{Im } \delta_{\phi,K_\q} \text{ for all } \q \in \Sigma_K\}.
\end{equation}\end{small}

Using the following result, one obtains a trivial bound on \begin{small}$\dim_{\F_3} {\rm Sel}^\phi (E_{16n^2}/K)$\end{small} \cite[Theorem~3.15]{jms}:
\begin{lemma}\label{selmerbound}
    Let \begin{small}$E_{16n^2}:y^2=x^3+16n^2$\end{small} be an elliptic curve and \begin{small}$\phi:E_{16n^2} \to E_{16n^2}$\end{small} be the $3$-isogeny over \begin{small}$K$\end{small} in \eqref{eq:defofphi}. Then \begin{small}${\rm Sel}^\phi (E_{16n^2}/K) \subset \big({A_{S_{n}}^*}/{A_{S_{n}}^{*3}}\big)_{N=1}$\end{small}. As a consequence we obtain that \begin{small} $ \dim_{\F_3} {\rm Sel}^\phi (E_{16n^2}/K) \le \dim_{\F_3} \big({A_{S_{n}}^*}/{A_{S_{n}}^{*3}}\big)_{N=1} = \dim_{\F_3} \frac{\OO_{K,S_{n}}^*}{\OO_{K,S_{n}}^{*3}} = |S_{n}|+1$.\end{small} 
\end{lemma}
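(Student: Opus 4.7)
The plan is to proceed in two steps: first show that every Selmer class has both coordinates representable by $S_n$-units, then bound the $\F_3$-dimension of the ambient group via Dirichlet's $S$-unit theorem.

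For the first step, take $(\bar{x}_1, \bar{x}_2) \in {\rm Sel}^\phi(E_{16n^2}/K)$. At any prime $\q \in \Sigma_K \setminus S_n$ with $\q \nmid 3$, Lemma \ref{imgofkummap}(1) gives $\text{Im}\,\delta_{\phi,K_\q} = (A^*_\q/A^{*3}_\q)_{N=1}$, so the local condition forces $\text{ord}_\q(x_i) \equiv 0 \pmod{3}$ for $i=1,2$. Because $n$ is coprime to $3$, the prime $\p$ above $3$ also lies in $\Sigma_K \setminus S_n$, and Lemma \ref{imgofkummap}(2) shows $\text{Im}\,\delta_{\phi,K_\p} \subset (A^*_\p/A^{*3}_\p)_{N=1}$, so the same divisibility holds at $\p$. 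Hence the fractional ideal $(x_i)$ has the form $\prod_{\q \in S_n} \q^{a_{i,\q}} \cdot \mathfrak{b}_i^3$ for some fractional ideal $\mathfrak{b}_i$. Since $K=\Q(\zeta)$ has class number one, each $\mathfrak{b}_i$ is principal, and after multiplying $x_i$ by a suitable cube in $K^*$ we may assume $x_1, x_2 \in \mathcal{O}^*_{K, S_n}$. The Selmer condition $x_1 x_2 \in K^{*3}$ then reads $x_1 x_2 = v^3$ with $v \in K^*$; comparing valuations shows $v \in \mathcal{O}^*_{K, S_n}$, so $(\bar{x}_1, \bar{x}_2)$ indeed lies in $(A^*_{S_n}/A^{*3}_{S_n})_{N=1}$.

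For the dimension bound, Dirichlet's $S$-unit theorem yields $\mathcal{O}^*_{K, S_n} \cong \mu_6 \times \Z^{|S_n|}$, whence $\dim_{\F_3} \mathcal{O}^*_{K, S_n}/\mathcal{O}^{*3}_{K, S_n} = |S_n| + 1$, the extra $1$ coming from the primitive cube root of unity in $\mu_6/\mu_6^3$. The product (norm) map on the two-fold product of this group is clearly surjective via $(\bar{u}, \bar{1}) \mapsto \bar{u}$, so its kernel $(A^*_{S_n}/A^{*3}_{S_n})_{N=1}$ has $\F_3$-dimension $|S_n|+1$ as well, giving the stated bound on ${\rm Sel}^\phi$.

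The only delicate step is converting the local cubic divisibility of valuations into a genuine global $S_n$-unit representative; this leans on the triviality of $\text{Cl}(K)$, and over a general base field would require a correction term from the $3$-torsion of the $S_n$-class group. Everything else is a direct application of the local image computations already established and the $S$-unit theorem.
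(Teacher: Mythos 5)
Your argument is correct. Note that the paper does not actually prove this lemma; it simply quotes it from \cite[Theorem~3.15]{jms}, and your proof is the standard argument behind that result: the local conditions at all $\q \notin S_n$ (including $\p \mid 3$, by Lemma \ref{imgofkummap}) force $\ord_\q(x_i) \equiv 0 \pmod 3$, the class number one of $K=\Q(\zeta)$ upgrades this to genuine $S_n$-unit representatives modulo cubes, and Dirichlet's $S$-unit theorem together with the surjectivity of the norm map gives $\dim_{\F_3}\big({A_{S_{n}}^*}/{A_{S_{n}}^{*3}}\big)_{N=1} = |S_n|+1$. Your closing remark correctly identifies the triviality of the class group as the essential input that would otherwise contribute a $3$-torsion class-group term.
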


Following results relate \begin{small} $ \dim_{\F_3} {\rm Sel}^\phi (E_{16n^2}/K)$\end{small} with \begin{small} $\text{rk } E_{16n^2}(\Q)$\end{small}:
\begin{corollary}\label{corforrk}
Let \begin{small}$n$\end{small} be a cube-free natural number co-prime to $3$ and  \begin{small}$\{ \ell_1, \ldots, \ell_{k_1+k_2} \}$\end{small} be the set of all  distinct primes dividing \begin{small}$n$.\end{small} Further, assume that \begin{small}$\ell_i \equiv 1 \pmod 3$\end{small} for \begin{small}$ i \le k_1$\end{small} and \begin{small}$\ell_j \equiv 2 \pmod 3$\end{small} for \begin{small}$k_1+1 \le j \le k_1+k_2$. \end{small} Then 
\begin{small}$$ r_n := \text{rk } E_{-432n^2}(\Q) \le \begin{cases}
    2k_1+ k_2-1, & \text{ if } n \not\equiv \pm 1 \pmod 9,\\
    2k_1+ k_2, & \text{ if } n \equiv \pm 1 \pmod 9.
\end{cases}$$\end{small}
\end{corollary}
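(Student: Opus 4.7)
The plan is to first control the size of the $\phi$-Selmer group over $K$ via the trivial bound from Lemma~\ref{selmerbound}, sharpen it by one dimension in the non-congruence case using the local constraint at $\p$ from Lemma~\ref{imgofdeltaatp}(1), and then translate to a rank bound by invoking Lemma~\ref{rankbound}.

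I would begin by observing that $|S_n| = 2k_1 + k_2$: in $K = \Q(\zeta)$ each rational prime $\ell_i \equiv 1 \pmod 3$ splits into two primes of $K$, while each $\ell_j \equiv 2 \pmod 3$ remains inert. Lemma~\ref{selmerbound} therefore yields $\dim_{\F_3}{\rm Sel}^\phi(E_{16n^2}/K) \le |S_n|+1 = 2k_1+k_2+1$. For $n \equiv \pm 1 \pmod 9$, feeding this into Lemma~\ref{rankbound} already delivers $r_n \le 2k_1 + k_2$, which is part~(2).

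The case $n \not\equiv \pm 1 \pmod 9$ requires shaving one dimension off the trivial Selmer bound. To do this I would exhibit the class $(\overline{\zeta^2},\overline{\zeta}) \in (A^*_{S_n}/A^{*3}_{S_n})_{N=1}$ as an element of the ambient group that lies outside ${\rm Sel}^\phi(E_{16n^2}/K)$. It is norm-trivial since $\zeta^3 = 1$, and it is non-zero because any $u \in \OO^*_{K,S_n}$ with $u^3 = \zeta$ would, by valuation comparison at the primes of $S_n$, have to lie in $\OO_K^* = \mu_6$, while $\mu_6^3 = \mu_2 \not\ni \zeta$. On the other hand, the last assertion of Lemma~\ref{imgofdeltaatp}(1) states exactly that $(\overline{\zeta^2},\overline{\zeta}) \notin \text{Im }\delta_{\phi,K_\p}$ in this case, so this class is locally obstructed at $\p$. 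Consequently $\dim_{\F_3}{\rm Sel}^\phi(E_{16n^2}/K) \le 2k_1 + k_2$, and Lemma~\ref{rankbound} then gives $r_n \le 2k_1 + k_2 - 1$, establishing part~(1).

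The only substantive point, and thus the main obstacle, is the twin verification used in the previous paragraph: that $(\overline{\zeta^2},\overline{\zeta})$ defines a non-zero class in the $S_n$-unit group modulo cubes (not merely in $\OO_K^*$ alone), and that its localization at $\p$ is precisely the class excluded by Lemma~\ref{imgofdeltaatp}(1). Both reduce to short checks against the explicit descriptions already established in \S\ref{prelims}, after which the corollary is a direct assembly of Lemmas~\ref{selmerbound}, \ref{imgofdeltaatp} and \ref{rankbound}.
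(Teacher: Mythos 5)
Your proposal is correct and takes essentially the same route as the paper: the trivial bound $\dim_{\F_3}{\rm Sel}^\phi(E_{16n^2}/K)\le |S_n|+1=2k_1+k_2+1$ from Lemma \ref{selmerbound}, the exclusion of $(\overline{\zeta^2},\overline{\zeta})$ from the Selmer group via Lemma \ref{imgofdeltaatp}(1) when $n \not\equiv \pm 1 \pmod 9$ to drop one dimension, and the passage from Selmer dimension to rank (the paper quotes \eqref{eq:expforrk} directly, which is exactly the content of the first part of Lemma \ref{rankbound} that you invoke). Your extra check that $(\overline{\zeta^2},\overline{\zeta})$ is nontrivial in $\big(A^*_{S_n}/A^{*3}_{S_n}\big)_{N=1}$ is harmless but not needed, since an element failing the local condition at $\p$ cannot be the trivial class anyway.
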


\begin{proof}
Notice that the the primes \begin{small}$\ell_j$\end{small} for \begin{small}$k_1+1 \le j \le k_1+k_2$\end{small} remain inert in \begin{small}$K$,\end{small} whereas the primes \begin{small}$\ell_i$\end{small} for \begin{small}$1 \le i \le k_1$\end{small} split in \begin{small}$K$,\end{small} say \begin{small}$\ell_i\OO_K = \pi_{\ell_i} \pi'_{\ell_i}$.\end{small} (One can make an explicit choice of \begin{small}$\pi_{\ell_i}$\end{small} (resp. \begin{small}$\pi'_{\ell_i}$\end{small}) as follows: Write \begin{small}$\ell_i = \frac{1}{4}(L_i^2+27M_i^2)$\end{small} and define \begin{small}$\pi_{\ell_i} = \frac{1}{2}(L_i+ \sqrt{-27} M_i)$\end{small} and \begin{small}$\pi'_{\ell_i}= \frac{1}{2}(L_i- \sqrt{-27} M_i)$.\end{small}) Therefore,
\begin{small}$$ S_n = \{ \pi_{\ell_1}, \pi'_{\ell_1}, \ldots, \pi_{\ell_{k_1}}, \pi'_{\ell_{k_1}}, \ell_{k_1+1}, \ldots, \ell_{k_1+k_2}  \}$$\end{small}
and hence \begin{small}$|S_n| = 2k_1+k_2$.\end{small} So, it follows from Lemma \ref{selmerbound} that \begin{small}$\dim_{\F_3} {\rm Sel}^\phi (E_{16n^2}/K) \le |S_n| +1 = 2k_1+k_2+1$.\end{small} 
Now recall from \cite[Corollary~3.26]{jms} that
\begin{small}\begin{equation}\label{eq:expforrk} \text{rk } E_{16n^2}(\Q) = \dim_{\F_3} {\rm Sel}^\phi (E_{16n^2}/K) - \dim_{\F_3} E_{16n^2}(K)[\phi] - \dim_{\F_3} \Sh(E_{16n^2}/K)[\phi]\end{equation}\end{small}
Thus, \begin{small}$\text{rk } E_{16n^2}(\Q) \le \dim_{\F_3} {\rm Sel}^\phi (E_{16n^2}/K) - \dim_{\F_3} E_{16n^2}(K)[\phi]$. \end{small} Note that  \begin{small}$|E_{16n^2}(K)[\phi]|=3$\end{small}, so it follows that
 \begin{small}$r_n=\text{rk } E_{16n^2}(\Q) \le \dim_{\F_3} {\rm Sel}^\phi (E_{16n^2}/K)-1 \le |S_n|= 2k_1+k_2$.\end{small}\\

In particular, if \begin{small}$n \not\equiv \pm 1 \pmod 9$,\end{small} then we claim that \begin{small}$\dim_{\F_3} {\rm Sel}^\phi (E_{16n^2}/K) \le |S_n| = 2k_1+k_2$\end{small} and hence \begin{small}$r_n \le 2k_1+k_2-1$\end{small} in this situation. Observe that the group 
\begin{small}$ \big({A_{S_{n}}^*}/{A_{S_{n}}^{*3}}\big)_{N=1} := \{ (\overline{x}^2, \overline{x}) \mid \overline{x} \in {\OO_{S_{n}}^*}/{\OO_{S_{n}}^{*3}} \}$,\end{small} where \begin{small}$\OO_{S_{n}}^*$\end{small} denotes the set of \begin{small}$S_n$\end{small}-units in \begin{small}$K$,\end{small} has \begin{small}$\F_3$-\end{small}dimension \begin{small}$|S_n|+1$\end{small} by Dirichlet's $S$-units theorem. In particular, \begin{small}$(\overline{\zeta^2},\overline{\zeta}) \in \big({A_{S_{n}}^*}/{A_{S_{n}}^{*3}}\big)_{N=1}$.\end{small} 
On the other hand, note from Lemma \ref{imgofdeltaatp} that \begin{small}$(\overline{\zeta^2},\overline{\zeta}) \notin \text{Im } \delta_{\phi,K_\p}$,\end{small} whenever \begin{small}$n \not\equiv \pm 1 \pmod 9$.\end{small} Consequently, \begin{small}$(\overline{\zeta^2},\overline{\zeta}) \notin {\rm Sel}^\phi (E_{16n^2}/K)$\end{small}, so \begin{small}${\rm Sel}^\phi (E_{16n^2}/K) \subsetneq \big({A_{S_{n}}^*}/{A_{S_{n}}^{*3}}\big)_{N=1}$.\end{small} Thus, one obtains \begin{small}$\dim_{\F_3} {\rm Sel}^\phi (E_{16n^2}/K) \lneq \dim_{\F_3} \big({A_{S_{n}}^*}/{A_{S_{n}}^{*3}}\big)_{N=1} = 2k_1+k_2+1 $.\end{small}
\end{proof}

\begin{lemma}\label{rankbound}
    Let \begin{small}$E_{16n^2}:y^2=x^3+16n^2$\end{small} be an elliptic curve and \begin{small}$\phi:E_{16n^2} \to E_{16n^2}$\end{small} the $3$-isogeny over \begin{small}$K$\end{small} defined in \eqref{eq:defofphi}. If \begin{small} $ \dim_{\F_3} {\rm Sel}^\phi (E_{16n^2}/K)=t$,\end{small} then \begin{small} $ \text{rk } E_{16n^2}(\Q) \le t-1$.\end{small}\\
    Moreover, if \begin{small} $ \dim_{\F_3} \Sh(E_{16n^2}/\Q)[3]$\end{small} is even, then  \begin{small} $ \text{rk } E_{16n^2}(\Q) \equiv t-1 \pmod 2$.\end{small}
\end{lemma}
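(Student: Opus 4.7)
The plan is to derive both parts of the lemma from the rank formula \eqref{eq:expforrk}. For the first assertion, I first observe that $E_{16n^2}[\phi]=\{O,(0,4n),(0,-4n)\}$ consists entirely of $\Q$-rational points, whence $\dim_{\F_3}E_{16n^2}(K)[\phi]=1$. Substituting this into \eqref{eq:expforrk} gives the identity
\[
\text{rk } E_{16n^2}(\Q) \;=\; t-1-\dim_{\F_3}\Sh(E_{16n^2}/K)[\phi],
\]
from which the upper bound $\text{rk } E_{16n^2}(\Q)\le t-1$ follows immediately because $\dim_{\F_3}\Sh(E_{16n^2}/K)[\phi]\ge 0$.

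For the parity assertion, the displayed identity reduces the claim to showing that $\dim_{\F_3}\Sh(E_{16n^2}/K)[\phi]$ is even under the hypothesis. My plan is threefold. First, I would apply the inflation--restriction sequence for the quadratic extension $K/\Q$ (whose degree $2$ is coprime to $3$), together with the observation that $E_{-432n^2}$ is the $K/\Q$-quadratic twist of $E_{16n^2}$ (so that over $K$ these curves are isomorphic and $\phi_n$ pulls back to $\phi$), to decompose
\[
\Sh(E_{16n^2}/K)[\phi] \;\cong\; \Sh(E_{16n^2}/\Q)[\phi_n]\oplus\Sh(E_{-432n^2}/\Q)[\hat\phi_n]
\]
into the $(\pm)$-eigenspaces for $\mathrm{Gal}(K/\Q)$, where $\phi_n:E_{16n^2}\to E_{-432n^2}$ and $\hat\phi_n:E_{-432n^2}\to E_{16n^2}$ are the dual $\Q$-rational $3$-isogenies. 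Second, I would invoke the Cassels--Tate pairing for the dual isogeny pair $(\phi_n,\hat\phi_n)$, which yields a non-degenerate pairing $\Sh(E_{16n^2}/\Q)[\phi_n]/\hat\phi_n\Sh(E_{-432n^2}/\Q)[3]\times \Sh(E_{-432n^2}/\Q)[\hat\phi_n]/\phi_n\Sh(E_{16n^2}/\Q)[3]\to\Q/\Z$, combined with the self-dual Cassels--Tate pairing on $\Sh(E_{16n^2}/\Q)$ (alternating in odd characteristic), to control the parities. Third, I would feed the hypothesis $\dim_{\F_3}\Sh(E_{16n^2}/\Q)[3]$ even (equivalently, $\dim_{\F_3}\Sh(E_{-432n^2}/\Q)[3]$ even by the previous pairing) into the resulting bookkeeping to conclude that the sum of the two $\F_3$-dimensions on the right-hand side of the displayed decomposition is itself even.

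The hardest step is the last one: the parity bookkeeping that extracts the evenness of $\dim_{\F_3}\Sh(E_{16n^2}/\Q)[\phi_n]+\dim_{\F_3}\Sh(E_{-432n^2}/\Q)[\hat\phi_n]$ from the evenness of the full $3$-part of $\Sha$ over $\Q$, since several related $\F_3$-dimensions enter the Cassels--Tate relations and must be tracked simultaneously. The analogous computation in the single-prime ($k_1+k_2=1$) setting is carried out in detail in \cite[Rem.~5.13]{jms}, and the same mechanism adapts to our two-prime situation.
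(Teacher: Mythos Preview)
Your first assertion is handled exactly as in the paper: the identity \eqref{eq:expforrk} together with $\dim_{\F_3}E_{16n^2}(K)[\phi]=1$ gives the bound.

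For the parity statement your route is correct but organized differently from the paper. You stay with $\Sh(E_{16n^2}/K)[\phi]$, decompose it into $\Sh(E_{16n^2}/\Q)[\phi_n]\oplus\Sh(E_{-432n^2}/\Q)[\hat\phi_n]$, and then argue that the sum of dimensions is even; unwinding your Cassels--Tate bookkeeping amounts to the identity
\[
\dim_{\F_3}\Sh(E_{16n^2}/\Q)[\phi_n]+\dim_{\F_3}\Sh(E_{-432n^2}/\Q)[\hat\phi_n]
=\dim_{\F_3}R+\dim_{\F_3}\Sh(E_{16n^2}/\Q)[3],
\]
with $R=\Sh(E_{-432n^2}/\Q)[\hat\phi_n]/\phi_n\Sh(E_{16n^2}/\Q)[3]$ of even $\F_3$-dimension by \cite[Prop.~49]{bes}. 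The paper instead avoids the $\Sh$-decomposition over $K$ entirely: it computes $\dim_{\F_3}{\rm Sel}^3(E_{16n^2}/\Q)$ in two ways, once as $\text{rk }E_{16n^2}(\Q)+1+\dim_{\F_3}\Sh(E_{16n^2}/\Q)[3]$ and once via the exact sequence of \cite[Prop.~49]{bes} together with the splitting ${\rm Sel}^\phi(E_{16n^2}/K)\cong{\rm Sel}^{\phi_n}(E_{16n^2}/\Q)\oplus{\rm Sel}^{\hat\phi_n}(E_{-432n^2}/\Q)$ from \cite[Lemma~3.20]{jms}, arriving directly at
\[
\text{rk }E_{16n^2}(\Q)=t-1-\dim_{\F_3}\Sh(E_{16n^2}/\Q)[3]-\dim_{\F_3}R.
\]
Both arguments hinge on the same input (the evenness of $\dim_{\F_3}R$), but the paper's version is shorter since it works only at the Selmer level over $\Q$ and does not need to justify the eigenspace decomposition of $\Sh(E_{16n^2}/K)[\phi]$; your approach buys a cleaner conceptual picture (everything happens inside $\Sh$ over $K$) at the cost of that extra decomposition step.
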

\begin{proof}
    We get \begin{small}$\text{rk } E_{16n^2}(\Q) \le \dim_{\F_3} {\rm Sel}^\phi (E_{16n^2}/K) - 1$\end{small} from \eqref{eq:expforrk}. So if \begin{small}$ \dim_{\F_3} {\rm Sel}^\phi (E_{16n^2}/K)=t$,\end{small} then  \begin{small}$\text{rk } E_{16n^2}(\Q) \le t-1$.\end{small} 

    Recall that \begin{small}$\dim_{\F_3} {\rm Sel}^3({E}_{16n^2}/\Q) = \text{rk } E_{16n^2}(\Q) + \dim_{\F_3} E_{16n^2}(\Q)[3] + \dim_{\F_3} \Sh(E_{16n^2}/\Q)[3] = \text{rk } E_{16n^2}(\Q) + \dim_{\F_3} \Sh(E_{16n^2}/\Q)[3] +1$, \end{small} since \begin{small} $|E_{16n^2}(\Q)[3]|=3$.\end{small}
    
    On the other hand, consider the rational $3$-isogeny \begin{small}$\phi_n:E_{16n^2} \to E_{-432n^2}$\end{small} defined in the Introduction and its dual isogeny \begin{small}$\widehat{\phi}_n:E_{-432n^2} \to E_{16n^2}$\end{small}. 
    Note that \begin{small}$|E_{-432n^2}(\Q)[\widehat{\phi}_n]|=1$.\end{small} Set \begin{small}$R:=\frac{\Sh(E_{-432n^2}/\Q)[\widehat{\phi}_n]}{\phi_n(\Sh(E_{16n^2}/\Q)[3])}$\end{small}. Then by \cite[Prop.~49]{bes} we have the following exact sequence
    \begin{small}$$0 \rightarrow \frac{E_{-432n^2}(\Q)[\widehat{\phi}_n]}{{\phi}_n({E}_{16n^2}(\Q)[3])} \rightarrow {\rm Sel}^{{\phi}_n}({E}_{16n^2}/\Q) \rightarrow {\rm Sel}^3({E}_{16n^2}/\Q) \rightarrow {\rm Sel}^{\widehat{\phi}_n}({E}_{-432n^2}/\Q) \rightarrow R \rightarrow 0.$$\end{small}
\noindent So, \begin{small}$\dim_{\F_3} {\rm Sel}^3({E}_{16n^2}/\Q) = \dim_{\F_3} {\rm Sel}^{{\phi}_n}({E}_{16n^2}/\Q) + \dim_{\F_3} {\rm Sel}^{\widehat{\phi}_n}({E}_{-432n^2}/\Q) - \dim_{\F_3} \frac{E_{-432n^2}(\Q)[\widehat{\phi}_n]}{{\phi}_n({E}_{16n^2}(\Q)[3])}$ $- \dim_{\F_3} R = \dim_{\F_3} {\rm Sel}^{{\phi}_n}({E}_{16n^2}/\Q) + \dim_{\F_3} {\rm Sel}^{\widehat{\phi}_n}({E}_{-432n^2}/\Q) - \dim_{\F_3} R $,\end{small} since \begin{small}$E_{-432n^2}(\Q)$\end{small} has no non-trivial $3$-torsion points. Now, we have \begin{small}${\rm Sel}^\phi(E_{16n^2}/K) \cong {\rm Sel}^{\phi_n}(E_{16n^2}/\Q) \oplus {\rm Sel}^{\widehat{\phi}_n}(E_{-432n^2}/\Q)$\end{small} by \cite[Lemma~3.20]{jms}. Therefore, we obtain 
\begin{small}$\dim_{\F_3} {\rm Sel}^3({E}_{16n^2}/\Q) =$ $\dim_{\F_3} {\rm Sel}^{{\phi}}({E}_{16n^2}/K)- \dim_{\F_3} R =$ $ t  - \dim_{\F_3} R$\end{small}. 

Then the two expressions for \begin{small}$\dim_{\F_3} {\rm Sel}^3(E_{16n^2}/\Q)$\end{small}  composedly imply \begin{small}$$\text{rk } E_{16n^2}(\Q)=\dim_{\F_3} {\rm Sel}^{{\phi}}({E}_{16n^2}/K) - \dim_{\F_3} \Sh(E_{16n^2}/\Q)[3]   - \dim_{\F_3} R -1.$$\end{small}
We know that \begin{small}$\dim_{\F_3} R$\end{small} is even by \cite[Prop.~49]{bes}. Now if we assume \begin{small}$\dim_{\F_3} \Sh(E_{16n^2}/\Q)[3]$\end{small} is even, then \begin{small}$\text{rk } E_{16n^2}(\Q) \equiv \dim_{\F_3} {\rm Sel}^{{\phi}}({E}_{16n^2}/K) -1 = t - 1 \pmod 2.$\end{small}
\end{proof}

\section{Explicit computation of $3$-rank of ${\rm Sel}^\phi(E_{16n^2}/K)$}\label{sec2}
Henceforth, we restrict our attention to cube-free natural numbers \begin{small}$n$\end{small} that are co-prime to $3$ and have exactly two distinct prime factors, that is \begin{small}$n$\end{small} is of the form \begin{small}$n= \ell_1\ell_2$\end{small} or \begin{small}$n= \ell_1^2\ell_2$\end{small} or \begin{small}$n= \ell_1^2\ell_2^2$,\end{small} where \begin{small}$\ell_1 \neq 3$\end{small} and \begin{small}$\ell_2 \neq 3$\end{small} are two distinct primes. In the language of the Generalised Sylvester's Conjecture, we are studying the case \begin{small}$k_1+k_2=2$.\end{small}
We consider the cases \begin{small}$k_1=0$, $k_1=1$\end{small} and \begin{small}$k_1=2$\end{small} separately in three different subsections. In each of the three subsections, we obtain the \begin{small}$\F_3$\end{small}-dimensions of \begin{small}${\rm Sel}^{{\phi}}(E_{16n^2}/K)$\end{small} for integers \begin{small}$n \in \{\ell_1\ell_2, \ell_1^2\ell_2, \ell_1\ell_2^2, \ell_1^2\ell_2^2\}$,\end{small} 
where \begin{small}$\ell_1$\end{small} and \begin{small}$\ell_2$\end{small} are two distinct primes both different from $3$. However, we furnish the proofs only for \begin{small}$n=\ell_1\ell_2$\end{small} to avoid redundancy of literature, as the proofs for other types of \begin{small}$n$\end{small} are similar in nature.

\subsection{The Case $k_1=0$, i.e., $\ell_1 \equiv \ell_2 \equiv 2 \pmod 3$}
In this subsection, we assume that \begin{small}$\ell_1 \equiv \ell_2 \equiv 2 \pmod 3$.\end{small} We have the following results regarding \begin{small}$\dim_{\F_3}{\rm Sel}^{{\phi}}({E}_{16n^2}/K)$:\end{small}

\begin{proposition}\label{sylvesterpq}
Let \begin{small}$\ell_1 \equiv \ell_2 \equiv 2 \pmod 3$\end{small} be distinct primes and \begin{small}$n$\end{small} is either \begin{small}$\ell_1\ell_2$\end{small} or \begin{small}$\ell_1^2\ell_2^2$\end{small}. Let \begin{small}$\phi: E_{16n^2} \to E_{16n^2}$\end{small} be the $3$-isogeny over \begin{small}$K$\end{small} given in \eqref{eq:defofphi}. Then \begin{small}$$\dim_{\F_3} {\rm Sel}^\phi(E_{16n^2}/K) =\begin{cases} 1, & \text{ if } \ell_1 \equiv 2 \pmod 9 \text{ and } \ell_2 \equiv 5 \pmod 9,\\ 3, & \text{ if }  \ell_1 \equiv \ell_2 \equiv 8 \pmod 9,\\ 2, & \text{ if }  n \equiv 4,7 \pmod 9.\end{cases}$$\end{small}
\end{proposition}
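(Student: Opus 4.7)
The plan is to directly compute $\mathrm{Sel}^\phi(E_{16n^2}/K)$ inside the finite group $\big(A_{S_n}^*/A_{S_n}^{*3}\big)_{N=1}$ that contains it by Lemma \ref{selmerbound}. Since $\ell_1 \equiv \ell_2 \equiv 2 \pmod 3$, both primes remain inert in $K$, so $|S_n|=2$ and the ambient group has $\F_3$-dimension $3$, with a general element of the form $(\overline{s}^2, \overline{s})$ where $s = \zeta^a \ell_1^b \ell_2^c$ for $(a,b,c) \in (\Z/3\Z)^3$. The task is to determine, for each triple, whether $(\overline{s}^2, \overline{s})$ lies in $\mathrm{Im}\,\delta_{\phi, K_\q}$ for every $\q \in \{\p, (\ell_1), (\ell_2)\}$; the remaining primes impose no condition by Lemma \ref{imgofkummap}(1).

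For $\q = (\ell_i)$, Lemma \ref{imgofkummap} gives $\mathrm{Im}\,\delta_{\phi,K_{\ell_i}} = \langle (\overline{n}^2, \overline{n})\rangle$. After matching $\ell_i$-valuations and then unit parts in the second coordinate, membership reduces to requiring that $\zeta^a$ times a certain power of $\ell_{3-i}$ be a cube in $K_{\ell_i}^*$. Because the cube-index of $\F_{\ell_i^2}^*$ comes entirely from the factor $\ell_i+1$ of $\ell_i^2-1$ (using $\ell_i \equiv 2 \pmod 3$), every element of $\F_{\ell_i}^* \subset \F_{\ell_i^2}^*$ is automatically a cube, so the $\ell_{3-i}$-factor is irrelevant. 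The residual condition that $\zeta^a \in (K_{\ell_i}^*)^3$ is automatic precisely when $\mu_9 \subset K_{\ell_i}$, i.e., when $\ell_i \equiv 8 \pmod 9$; otherwise it forces $a \equiv 0 \pmod 3$.

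For $\q = \p$, the content of Lemma \ref{imgofdeltaatp} is translated through the isomorphism $f : U^1_{K_\p}/U^4_{K_\p} \cong \big(A_\p^*/A_\p^{*3}\big)_{N=1}$ of Lemma \ref{V3U1U4}. Using the $\F_3$-basis $\{1+\p,\,1+\p^2,\,1+\p^3\}$ of the source and the identity $3 = -\zeta^2 \p^2$, a direct mod-$\p^4$ expansion yields $\zeta \equiv (1+\p)^{-1}(1+\p^2)^{-1} \pmod{U^4_{K_\p}}$ and
\[
-\ell_i \;\equiv\; (1+\p^2)(1+\p^3),\;\; (1+\p^2)^2(1+\p^3)^2,\;\; 1 \pmod{U^4_{K_\p}}
\]
according to whether $\ell_i \equiv 2,\,5,\,8 \pmod 9$ respectively; since $-1$ is a cube in $K_\p^*$, the classes of $\ell_i$ and $-\ell_i$ coincide in $K_\p^*/(K_\p^*)^3$. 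Then Lemma \ref{imgofdeltaatp} identifies $\mathrm{Im}\,\delta_{\phi,K_\p}$ with the hyperplane defined by vanishing of the first basis coordinate (i.e.\ with $U^2_{K_\p}/U^4_{K_\p}$) when $n \not\equiv \pm 1 \pmod 9$, and with the hyperplane defined by equality of the first two basis coordinates (i.e.\ with $\langle \zeta,\, 1+\p^3\rangle \pmod{U^4_{K_\p}}$) when $n \equiv \pm 1 \pmod 9$.

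Substituting the coordinates of $s$ into these linear conditions reduces each subcase to $\F_3$-linear algebra. For $\ell_1 \equiv 2,\ \ell_2 \equiv 5 \pmod 9$ (so $n \equiv 1 \pmod 9$), the $\p$-condition becomes $b = c$ while each $(\ell_i)$-condition forces $a=0$, leaving $3$ solutions. For $\ell_1 \equiv \ell_2 \equiv 8 \pmod 9$ (so $n \equiv 1 \pmod 9$), every local condition is vacuous, leaving all $27$ elements. For $n \equiv 4,7 \pmod 9$ we fall under case $(1)$ of Lemma \ref{imgofdeltaatp}, the $\p$-condition forces $a=0$, which either coincides with or subsumes the $(\ell_i)$-conditions, leaving $9$ solutions. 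The case $n = \ell_1^2\ell_2^2$ is handled identically: $\langle \overline{n}\rangle = \langle \overline{n^2}\rangle$ leaves the $(\ell_i)$-conditions unchanged, the $\p$-case depends only on $n \bmod 9$ (which is preserved under squaring), and replacing $\ell_i$ by $\ell_i^2$ in $s$ merely negates the exponents $b,c$, preserving the solution count. The principal technical hurdle is the $\p$-adic coordinate table for $\zeta$ and the $\ell_i$; once it is in place, the remaining bookkeeping is elementary.
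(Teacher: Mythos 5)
Your proposal is correct and takes essentially the same route as the paper: both determine ${\rm Sel}^\phi(E_{16n^2}/K)$ inside $\big(A_{S_n}^*/A_{S_n}^{*3}\big)_{N=1}$ (Lemma \ref{selmerbound}) by imposing only the local conditions at $\p$, $\ell_1$, $\ell_2$ coming from Lemmas \ref{imgofkummap} and \ref{imgofdeltaatp}, all other primes being vacuous. The difference is purely one of bookkeeping: you convert the conditions into $\F_3$-linear equations in the exponents of $\zeta,\ell_1,\ell_2$ (your coordinates for $\zeta$ and $\pm\ell_i$ in the basis $1+\p,\,1+\p^2,\,1+\p^3$ of $U^1_{K_\p}/U^4_{K_\p}$ are correct), whereas the paper tests the generators and the residual order-$3$ subgroups case by case via cubic residue symbols; both yield the counts $3$, $27$, $9$, i.e.\ dimensions $1$, $3$, $2$.
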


\begin{proof}
  We prove the result for \begin{small}$n = \ell_1\ell_2$.\end{small} Observe that \begin{small}$S_{n}=\{\ell_1\OO_K, \ell_2\OO_K\}$.\end{small} By Lemma \ref{selmerbound}, we have \begin{small} ${\rm Sel}^\phi(E_{16n^2}/K) \subset \Big(\frac{A_{S_{n}}^*}{A_{S_{n}}^{*3}}\Big)_{N=1}$\end{small} and \begin{small}$\dim_{\F_3} {\rm Sel}^\phi(E_{16n^2}/K) \le |S_n|+1= 3$.\end{small}
The group of \begin{small}$S_{n}$\end{small}-units in \begin{small}$K$\end{small} is \begin{small}$\OO^*_{K,S_{n}}=\langle \pm \zeta, \ell_1, \ell_2 \rangle=\langle \pm \zeta, \ell_1, n \rangle$.\end{small} Hence, \begin{small} $\Big(\frac{A_{S_{n}}^*}{A_{S_{n}}^{*3}}\Big)_{N=1} = \big\langle\big(\overline{\zeta^2},\overline{\zeta}\big), \big(\overline{\ell_1^2},\overline{\ell_1}\big), \big(\overline{n}^2,\overline{n}\big)\big\rangle$.\end{small} 
Note that \begin{small}$(0,4n) \in E_{16n^2}(\Q)$\end{small} and  \begin{small}$\delta_{\phi,K_\q}(0,4n)= \big(\overline{n}^2,\overline{n}\big)$\end{small} for all \begin{small}$\q$.\end{small} Hence, \begin{small}$\big(\overline{n}^2,\overline{n}\big) \in {\rm Sel}^\phi(E_{16n^2}/K)$.\end{small} Thus, \begin{small}$1 \le \dim_{\F_3} {\rm Sel}^\phi(E_{16n^2}/K) \le 3$.\end{small} We now examine the remaining generators of \begin{small}$\Big(\frac{A_{S_{n}}^*}{A_{S_{n}}^{*3}}\Big)_{N=1}$\end{small} for their containment in \begin{small} ${\rm Sel}^\phi(E_{16n^2}/K)$.\end{small}

First, we discuss the generator \begin{small}$(\overline{\zeta}^2,\overline{\zeta})$\end{small}. For \begin{small}$\ell_1 \equiv \ell_2 \equiv 8 \pmod 9,$\end{small} we know that \begin{small}$\zeta_9 \in K_{\ell_1}$\end{small} and \begin{small}$\zeta_9 \in K_{\ell_1}$\end{small}. Then it follows that \begin{small}$(\overline{\zeta}^2,\overline{\zeta})=(\overline{1},\overline{1}) \in \big(A_{\ell_i}^*/A_{\ell_i}^{*3}\big)_{N=1}$\end{small} for \begin{small}$i=1,2$.\end{small} Hence, \begin{small}$(\overline{\zeta}^2,\overline{\zeta})$\end{small} belongs to both \begin{small}$\text{Im } \delta_{\phi,K_{\ell_1}}$\end{small} and \begin{small}$\text{Im } \delta_{\phi,K_{\ell_2}}$\end{small} by Lemma \ref{imgofkummap}. Also  \begin{small}$(\overline{\zeta}^2,\overline{\zeta}) \in \text{Im } \delta_{\phi,K_{\p}}$\end{small} by Lemma \ref{imgofdeltaatp}(2) in this situation. For primes \begin{small}$\q \notin S_{n} \cup \{\p\}$, $(\overline{\zeta}^2,\overline{\zeta}) \in \big({A_\q^*}/{A_\q^{*3}}\big)_{N=1}=\text{Im } \delta_{\phi,K_\q}$\end{small} by Lemma \ref{imgofkummap}. So \begin{small}$(\overline{\zeta}^2,\overline{\zeta}) \in {\rm Sel}^\phi(E_{16n^2}/K)$\end{small}, when \begin{small}$\ell_1 \equiv \ell_2 \equiv 8 \pmod 9$.\end{small} On the other hand, if \begin{small}$\ell_1 \equiv 2 \text{ or } 5 \pmod 9$,\end{small} then we know that \begin{small}$\zeta_9 \notin K_{\ell_1}$\end{small} and \begin{small}$\text{Im } \delta_{\phi,K_{\ell_1}} \cap \big(A_{\ell_1}^*/A_{\ell_1}^{*3}\big)_{N=1}=\{(\overline{1},\overline{1})\}$\end{small} by Lemma \ref{imgofkummap}. Hence, \begin{small}$(\overline{\zeta}^2,\overline{\zeta}) \notin \text{Im } \delta_{\phi,K_{\ell_1}}$\end{small} and so \begin{small}$(\overline{\zeta}^2,\overline{\zeta}) \notin {\rm Sel}^\phi(E_{16n^2}/K)$\end{small} when \begin{small}$\ell_1 \equiv 2 \text{ or } 5 \pmod 9$.\end{small}\\

We now investigate the remaining generator \begin{small}$(\overline{\ell_1^2},\overline{\ell_1})$\end{small} of \begin{small} $\Big(\frac{A_{S_{n}}^*}{A_{S_{n}}^{*3}}\Big)_{N=1}$.\end{small} Note that for primes \begin{small}$\q \notin S_{n} \cup \{\p\}$,\end{small} one has  \begin{small}$(\overline{\ell_1^2},\overline{\ell_1}) \in \text{Im } \delta_{\phi,K_\q}$\end{small} by Lemma \ref{imgofkummap}. Thus, we only need to check whether  \begin{small}$(\overline{\ell_1^2},\overline{\ell_1}) \in \text{Im } \delta_{\phi,K_\q}$\end{small} for \begin{small}$\q \in S_{n} \cup \{\p\}$.\end{small} Note that the cubic residue \begin{small}$\big(\frac{\ell_1}{\ell_2}\big)_3=1$,\end{small} as both \begin{small}$\ell_1$\end{small} and \begin{small}$ \ell_2$\end{small} are primes congruent to \begin{small}$2 \pmod 3.$\end{small} Hence, \begin{small}$(\overline{\ell_1^2},\overline{\ell_1})=(\overline{1},\overline{1}) \in \big({A_{\ell_2}^*}/{A_{\ell_2}^{*3}}\big)_{N=1}$\end{small} and so  \begin{small}$(\overline{\ell_1^2},\overline{\ell_1}) \in \text{Im } \delta_{\phi,K_{\ell_2}}$.\end{small} Similarly using \begin{small}$\big(\frac{\ell_2}{\ell_1}\big)_3=1$,\end{small} we get \begin{small}$(\overline{\ell_1^2},\overline{\ell_1})=(\overline{n}^2,\overline{n}) \in \text{Im } \delta_{\phi,K_{\ell_1}}$.\end{small} Thus, it reduces to check whether \begin{small}$(\overline{\ell_1^2},\overline{\ell_1}) \in \text{Im } \delta_{\phi,K_\p}$.\end{small}

Let us first take \begin{small}$\ell_1 \equiv \ell_2 \equiv 8 \pmod 9$.\end{small} In this case, \begin{small}$\ell_1^2 \equiv 1 \pmod 9$\end{small} i.e. \begin{small}$\ell_1^2 \in U^4_{K_\p}$.\end{small} Since \begin{small}$\frac{U^3_{K_\p}}{U^4_{K_\p}} \cong \frac{V_3}{A_\p^{*3}} \subset \text{Im } \delta_{\phi,K_\p}$\end{small} by Lemmas \ref{V3U1U4} and \ref{imgofkummap}, it follows that \begin{small}$(\overline{\ell_1^2},\overline{\ell_1})=(\overline{1},\overline{1}) \in \text{Im } \delta_{\phi,K_\p}$.\end{small} Hence, \begin{small}$(\overline{\ell_1^2},\overline{\ell_1}) \in {\rm Sel}^\phi(E_{16n^2}/K)$\end{small} and \begin{small}$\dim_{\F_3} {\rm Sel}^\phi(E_{16n^2}/K) =3$\end{small} when \begin{small}$\ell_1 \equiv \ell_2 \equiv 8 \pmod 9$.\end{small}  

 Next, let \begin{small}$n=\ell_1\ell_2 \not\equiv 1 \pmod 9$\end{small} and 
notice that \begin{small}$\ell_1^2 \equiv 1 \pmod 3$\end{small} i.e.
\begin{small}$\ell_1^2 \in U^2_{K_\p}$.\end{small} 
Hence by Lemma \ref{imgofdeltaatp}(1), \begin{small}$(\overline{\ell_1^2},\overline{\ell_1}) \in \text{Im } \delta_{\phi,K_\p}$\end{small} and it follows that  \begin{small}$(\overline{\ell_1^2},\overline{\ell_1}) \in {\rm Sel}^\phi(E_{16n^2}/K)$.\end{small} So, \begin{small}$\dim_{\F_3} {\rm Sel}^\phi(E_{16n^2}/K) =2$\end{small} whenever \begin{small}$n=\ell_1\ell_2 \not\equiv 1 \pmod 9$.\end{small}
 
Finally for \begin{small}$\ell_1 \equiv 2 \pmod 9$\end{small} and \begin{small}$\ell_2 \equiv 5 \pmod 9$,\end{small} 
 observe that \begin{small}$\ell_1^2 \in U^2_{K_\p} \setminus U^3_{K_\p}$.\end{small} This implies by Lemma \ref{imgofdeltaatp}(2) that \begin{small}$(\overline{\ell_1^2},\overline{\ell_1}) \notin \text{Im } \delta_{\phi,K_\p}$\end{small} and hence \begin{small}$(\overline{\ell_1^2},\overline{\ell_1}) \notin {\rm Sel}^\phi(E_{16n^2}/K)$.\end{small} There are four subgroups of \begin{small} $\Big(\frac{A_{S_{n}}^*}{A_{S_{n}}^{*3}}\Big)_{N=1} \Big/ \big\langle(\overline{n}^2,\overline{n})\big\rangle$\end{small} of order $3$  corresponding to the elements \begin{small}$\ell_1$, $\zeta$, $\ell_1\zeta$\end{small}  and \begin{small}$\ell_1\zeta^2$.\end{small} To conclude that \begin{small}$\dim_{\F_3} {\rm Sel}^\phi(E_{16n^2}/K) = 1$,\end{small} we need to show that neither
\begin{small}$\big\langle (\overline{\ell_1^2\zeta^2},\overline{\ell_1\zeta}) \big\rangle$\end{small} nor \begin{small}$\big\langle (\overline{\ell_1\zeta^2},\overline{\ell_1^2\zeta}) \big\rangle$\end{small} are contained in \begin{small}${\rm Sel}^\phi(E_{16n^2}/K)$.\end{small} Recall that \begin{small}$\text{Im } \delta_{\phi,K_{\ell_2}} \cap \big({A_{\ell_2}^*}/{A_{\ell_2}^{*3}}\big)_{N=1} = \{(\overline{1},\overline{1})\}$\end{small} by Lemma \ref{imgofkummap}. Using  \begin{small}$\big(\frac{\ell_1}{\ell_2}\big)_3=1$,\end{small} we see that  \begin{small}$(\overline{\ell_1^2\zeta^2},\overline{\ell_1\zeta})=(\overline{\zeta^2},\overline{\zeta}) \in \big({A_{\ell_2}^*}/{A_{\ell_2}^{*3}}\big)_{N=1}$.\end{small} 
Since \begin{small}$\zeta_9 \notin \OO_{K_{\ell_2}}^*$,\end{small} we have \begin{small}$(\overline{\ell_1^2\zeta^2},\overline{\ell_1\zeta})= (\overline{\zeta^2},\overline{\zeta}) \notin \text{Im } \delta_{\phi,K_{\ell_2}}$\end{small} and so it is not in \begin{small}${\rm Sel}^\phi(E_{16n^2}/K)$.\end{small} A similar argument works for \begin{small}$(\overline{\ell_1^2\zeta},\overline{\ell_1\zeta^2})$.\end{small} This shows that \begin{small}$\dim_{\F_3} {\rm Sel}^\phi(E_{16n^2}/K) =1$,\end{small} when \begin{small}$\ell_1 \equiv 2 \pmod 9$\end{small} and \begin{small}$\ell_2 \equiv 5 \pmod 9$.\end{small} 
This completes the proof for \begin{small}$n=\ell_1\ell_2$.\end{small}
\end{proof}

\begin{proposition}\label{sylvesterp2q}
Let \begin{small}$\ell_1 \equiv \ell_2 \equiv 2 \pmod 3$\end{small} be distinct primes and \begin{small}$n= \ell_1^2\ell_2$\end{small}. For the $3$-isogeny \begin{small}$\phi: E_{16n^2} \to E_{16n^2}$\end{small} over \begin{small}$K$\end{small} given in \eqref{eq:defofphi}, we have \begin{small}$$\dim_{\F_3} {\rm Sel}^\phi(E_{16n^2}/K) =\begin{cases} 1, & \text{ if } \ell_1 \equiv \ell_2 \equiv 2 \text{ or } 5 \pmod 9 ,\\ 
3, & \text{ if }  \ell_1 \equiv \ell_2 \equiv 8 \pmod 9,\\
2, & \text{ if }  n \equiv 2,5 \pmod 9. \hspace{2cm} \qed\end{cases}$$\end{small}
\end{proposition}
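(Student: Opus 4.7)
The plan is to mirror the proof of Proposition \ref{sylvesterpq}, adapting it for the exponent pattern $n = \ell_1^2\ell_2$. Since $\ell_1 \equiv \ell_2 \equiv 2 \pmod 3$, both primes remain inert in $K$, so $S_n = \{\ell_1\OO_K, \ell_2\OO_K\}$ and $\OO^*_{K,S_n} = \langle \pm\zeta, \ell_1, \ell_2\rangle$. Using the relation $n\ell_1 = \ell_1^3\ell_2$ to swap $\ell_2$ for $n$ modulo cubes, the group $\big(A^*_{S_n}/A^{*3}_{S_n}\big)_{N=1}$ is generated by $(\overline{\zeta^2},\overline{\zeta})$, $(\overline{\ell_1^2},\overline{\ell_1})$, and $(\overline{n^2},\overline{n})$, whence $\dim_{\F_3} {\rm Sel}^\phi(E_{16n^2}/K) \le 3$ by Lemma \ref{selmerbound}. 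Since $(\overline{n^2},\overline{n}) = \delta_\phi(0,4n)$ always lies in ${\rm Sel}^\phi$, we have $\dim_{\F_3} \ge 1$.

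Next I would check admissibility of the remaining generators place by place via Lemmas \ref{imgofkummap} and \ref{imgofdeltaatp}. For $(\overline{\zeta^2},\overline{\zeta})$: the condition at $\ell_i$ reduces to $\zeta_9 \in K_{\ell_i}$, i.e.\ $\ell_i \equiv 8 \pmod 9$, while at $\p$ it reduces to $n \equiv \pm 1 \pmod 9$. For $(\overline{\ell_1^2},\overline{\ell_1})$: outside $S_n \cup \{\p\}$ admissibility is automatic; at $\ell_2$ it follows from $\big(\frac{\ell_1}{\ell_2}\big)_3 = 1$ (which holds because $\ell_1 \in \F_{\ell_2}^* \subset (\F_{\ell_2^2}^*)^3$, using $3 \mid \ell_2+1$); at $\ell_1$ one multiplies by $(\overline{n^2},\overline{n})$ to reduce the class to $(\overline{\ell_2^2},\overline{\ell_2})$, which is trivial by the analogous cubic residue; and at $\p$, Lemma \ref{imgofdeltaatp} gives admissibility iff $\ell_1 \equiv \pm 1 \pmod 9$, i.e.\ iff $\ell_1 \equiv 8 \pmod 9$ (since $\ell_1 \equiv 2 \pmod 3$). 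Noting that $n = \ell_1^2\ell_2 \equiv -1 \pmod 9$ whenever $\ell_1 \equiv \ell_2 \pmod 9$, these checks dispatch two of the three subcases: $\dim = 3$ when $\ell_1 \equiv \ell_2 \equiv 8 \pmod 9$ (both extra generators admissible), and $\dim = 2$ when $n \equiv 2, 5 \pmod 9$ (only $(\overline{\ell_1^2},\overline{\ell_1})$ admissible, as $-\ell_1 \in U^2_{K_\p}$ lies in $\text{Im }\delta_{\phi,K_\p} \cong U^2_{K_\p}/U^4_{K_\p}$ by Lemma \ref{imgofdeltaatp}(1)).

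The main obstacle is the remaining subcase $\ell_1 \equiv \ell_2 \equiv 2 \pmod 9$ or $\ell_1 \equiv \ell_2 \equiv 5 \pmod 9$, where $n \equiv -1 \pmod 9$ but $\ell_i \not\equiv 8 \pmod 9$, so each of $(\overline{\zeta^2},\overline{\zeta})$ and $(\overline{\ell_1^2},\overline{\ell_1})$ fails at some place; I must also rule out all non-trivial combinations $(\overline{\zeta^{2a}\ell_1^{2b}}, \overline{\zeta^a\ell_1^b})$. The crucial computation is at $\p$: by Lemma \ref{V3U1U4} the ambient group $\big(A^*_\p/A^{*3}_\p\big)_{N=1}$ is identified with $U^1_{K_\p}/U^4_{K_\p}$, and by Lemma \ref{imgofdeltaatp}(2) the image $\text{Im }\delta_{\phi,K_\p}$ corresponds to $\langle \overline{\zeta},\overline{1+\p^3}\rangle$ therein. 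Using the filtration $U^1 \supset U^2 \supset U^3 \supset U^4$ with successive quotients $\cong \F_3$, one has $\zeta = 1-\p \in U^1 \setminus U^2$, $-\ell_1 \in U^2 \setminus U^3$ (since $v_3(-\ell_1 - 1) = 1$ when $\ell_1 \equiv 2, 5 \pmod 9$), and $n \in K_\p^{*3}$ (since $-n \in 1+9\Z_3 \subset U^4_{K_\p}$). As $\langle \overline{\zeta},\overline{1+\p^3}\rangle$ has zero image in the $U^2/U^3$ stratum, reading off the $U^2/U^3$ coordinate forces $b \equiv 0 \pmod 3$; the residual condition at $\ell_1$ then forces $\zeta^a \in K_{\ell_1}^{*3}$, hence $a \equiv 0 \pmod 3$ (as $\ell_1 \not\equiv 8 \pmod 9$). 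This collapses ${\rm Sel}^\phi(E_{16n^2}/K)$ to $\langle (\overline{n^2},\overline{n})\rangle$, giving $\dim = 1$ and completing the proof.
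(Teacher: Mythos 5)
Your proposal is correct and follows essentially the template the paper intends for this proposition (which it leaves unproved, citing the proof of Proposition \ref{sylvesterpq} as the model): bound the Selmer group inside $\big(A^*_{S_n}/A^{*3}_{S_n}\big)_{N=1}=\langle(\overline{\zeta^2},\overline{\zeta}),(\overline{\ell_1^2},\overline{\ell_1}),(\overline{n}^2,\overline{n})\rangle$ and test the generators place by place via Lemmas \ref{imgofkummap} and \ref{imgofdeltaatp}, with your filtration argument at $\p$ plus the condition at $\ell_1$ in the $\ell_1\equiv\ell_2\equiv 2,5\pmod 9$ subcase being just a tidier repackaging of the paper's enumeration of the order-$3$ subgroups. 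The only blemish is the sentence asserting admissibility of $(\overline{\ell_1^2},\overline{\ell_1})$ at $\p$ ``iff $\ell_1\equiv\pm1\pmod 9$,'' which is only the correct criterion when $n\equiv\pm1\pmod 9$; you implicitly fix this yourself in the $n\equiv 2,5\pmod 9$ case by invoking Lemma \ref{imgofdeltaatp}(1) and $-\ell_1\in U^2_{K_\p}$, so no actual gap results.
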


\begin{rem}
    The results in this subsection easily generalize to any cube-free integer \begin{small}$n$,\end{small} co-prime to $3$, all of whose prime divisors \begin{small}$\ell_1, \dots, \ell_k$\end{small} are congruent to $2$ modulo $3$. In this situation we have,
    \begin{small}$$\dim_{\F_3} {\rm Sel}^\phi(E_{16n^2}/K) =\begin{cases} k, & \text{ if } n \not \equiv \pm 1 \pmod 9 ,\\ 
k+1, & \text{ if }  \ell_1 \equiv \ell_2 \equiv \cdots \equiv \ell_k \equiv 8 \pmod 9,\\
k-1, & \text{ otherwise. }\end{cases}$$\end{small}
To see this, observe that if \begin{small}$n \not \equiv \pm 1 \pmod 9$,\end{small} then \begin{small}$(\overline{\zeta^2},\overline{\zeta}) \notin {\rm Sel}^\phi(E_{16n^2}/K)$,\end{small} whereas  \begin{small}$(\overline{\ell_i^2},\overline{\ell_i}) \in {\rm Sel}^\phi(E_{16n^2}/K)$\end{small} for each \begin{small}$1 \le i \le k$.\end{small} If \begin{small}$\ell_1 \equiv \ell_2 \equiv \cdots \equiv \ell_k \equiv 8 \pmod 9$,\end{small} then \begin{small}$(\overline{\zeta^2},\overline{\zeta}) \in {\rm Sel}^\phi(E_{16n^2}/K)$\end{small} and for every \begin{small}$1 \le i \le k$, $(\overline{\ell_i^2},\overline{\ell_i}) \in {\rm Sel}^\phi(E_{16n^2}/K)$.\end{small} Finally, if \begin{small}$n \equiv \pm 1 \pmod 9$\end{small} and if some of the \begin{small}$\ell_i \not\equiv 8 \pmod 9$,\end{small} writing \begin{small}$m = \zeta^{a_0} \ell_1^{a_1} \cdots \ell_k^{a_k}$,\end{small} one checks that \begin{small}$(\overline{m}^2,\overline{m}) \in {\rm Sel}^\phi(E_{16n^2}/K)$\end{small} if and only if \begin{small}$a_0=0$\end{small} and \begin{small}$m \equiv \pm 1 \pmod 9$.\end{small} An easy combinatorial argument shows that there are \begin{small}$3^{k-1}$\end{small} such elements.
\end{rem}

\subsection{The Case $k_1=1$, that is, $\ell_1 \equiv 1 \pmod 3$ and  $\ell_2 \equiv 2 \pmod 3$}
In this subsection, we study the situation when one of the prime divisors  of \begin{small}$n$\end{small} is congruent to \begin{small}$1 \pmod 3$,\end{small} while its other prime divisor is congruent to \begin{small}$2 \pmod 3$.\end{small} For simplicity of notation, we assume that \begin{small}$\ell_1 \equiv 1 \pmod 3$\end{small} and  \begin{small}$\ell_2 \equiv 2 \pmod 3$\end{small}.  
Note that \begin{small}$\ell_1$\end{small} splits as \begin{small}$\ell_1=\pi_{\ell_1}\pi'_{\ell_1}$\end{small} in \begin{small}$K$\end{small}, while \begin{small}$\ell_2$\end{small} remains inert in \begin{small}$K$.\end{small} We denote by \begin{small}$\big(\frac{\cdot}{\cdot}\big)_3$\end{small} the cubic residue symbol. Note that \begin{small}$\big(\frac{\ell_2}{\pi_{\ell_1}}\big)_3 = 1$\end{small} if and only if \begin{small}$\ell_2$\end{small} is a cube in \begin{small}$\F_{\ell_1}$.\end{small}

\begin{proposition}\label{sylvesterpq2}
Let \begin{small}$n$\end{small} be an integer of the form either \begin{small}$\ell_1\ell_2$\end{small} or \begin{small}$\ell_1^2\ell_2$\end{small} or \begin{small}$\ell_1\ell_2^2$\end{small} or \begin{small}$\ell_1^2\ell_2^2$\end{small}, where \begin{small}$\ell_1 \equiv 1 \pmod 3$\end{small} and \begin{small}$\ell_2 \equiv 2 \pmod 3$\end{small} are primes such that \begin{small}$n \not\equiv \pm 1 \pmod 9$.\end{small} For the $3$-isogeny \begin{small}$\phi: E_{16n^2} \to E_{16n^2}$\end{small} defined in \eqref{eq:defofphi}, we have \begin{small}$$\dim_{\F_3} {\rm Sel}^\phi(E_{16n^2}/K) =\begin{cases} 3,& \text{ if } \big(\frac{\ell_2}{\pi_{\ell_1}}\big)_3 = 1, \\ 1, & \text{ otherwise.}\end{cases}$$\end{small}
\end{proposition}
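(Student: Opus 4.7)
The plan is to mirror the strategy of Propositions~\ref{sylvesterpq} and~\ref{sylvesterp2q}, with additional bookkeeping because $\ell_1 \equiv 1 \pmod 3$ splits in $K$ as $\ell_1\OO_K = \pi_{\ell_1}\pi'_{\ell_1}$. As in those propositions, I would present the argument only for $n = \ell_1\ell_2$; the other three cases follow by analogous computations. Here $S_n = \{\pi_{\ell_1}, \pi'_{\ell_1}, \ell_2\OO_K\}$, so by Lemma~\ref{selmerbound} the group $\big(A^*_{S_n}/A^{*3}_{S_n}\big)_{N=1}$ is a $4$-dimensional $\F_3$-vector space with generators
\[
g_1 := (\overline{\zeta^2},\overline{\zeta}),\quad g_2 := (\overline{\pi_{\ell_1}^2},\overline{\pi_{\ell_1}}),\quad g_3 := (\overline{\pi_{\ell_1}'^2},\overline{\pi'_{\ell_1}}),\quad g_4 := (\overline{\ell_2^2},\overline{\ell_2}),
\]
with $g_2 g_3 g_4 = (\overline{n^2},\overline{n}) \in {\rm Sel}^\phi(E_{16n^2}/K)$. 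I would normalize $\pi_{\ell_1}$ to be primary in $\Z[\zeta]$ (so $\pi_{\ell_1}\equiv -1\pmod 3$); then $\pi'_{\ell_1}$ and $\ell_2 \equiv -1 \pmod 3$ are also primary, which will allow cubic reciprocity to apply cleanly.

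The first reduction is to show ${\rm Sel}^\phi \subseteq \langle g_2, g_3, g_4\rangle$. Since $n \not\equiv \pm 1 \pmod 9$, Lemma~\ref{imgofdeltaatp}(1) combined with the isomorphism of Lemma~\ref{V3U1U4} identifies $\text{Im}\,\delta_{\phi,K_\p}$ with $U^2_{K_\p}/U^4_{K_\p}$, and since $\zeta = 1 - \p$ represents a nontrivial class in $U^1_{K_\p}/U^2_{K_\p}$, $g_1$ fails the $\p$-condition. Conversely, $\ell_2 \equiv -1 \pmod 3$ and $-1 \in K_\p^{*3}$ together give $\ell_2 \in U^2_{K_\p}\cdot K_\p^{*3}$, and a short computation starting from $\pi_{\ell_1} \equiv L_1/2 \pmod{\p^2}$ with $L_1/2 \equiv \pm 1 \pmod 3$ handles $\pi_{\ell_1}$ and its conjugate. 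The $\p$-condition being a subgroup condition, $g_1 \notin {\rm Sel}^\phi$ while $g_2, g_3, g_4$ all pass, so ${\rm Sel}^\phi \subseteq \langle g_2, g_3, g_4\rangle$.

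The core step is to analyse the local conditions at each $\q \in S_n$ for a generic element $y = \pi_{\ell_1}^b \pi_{\ell_1}'^c \ell_2^d$ with $b,c,d \in \Z/3$. By Lemma~\ref{imgofkummap}, $\text{Im}\,\delta_{\phi,K_\q} = \langle(\overline{n^2},\overline{n})\rangle$, so matching $\q$-adic valuations of $y$ and $n^k$ pins down $k$ uniquely modulo $3$, and the residual unit-level condition becomes a cubic residue identity in the residue field. Setting $\chi := (\pi'_{\ell_1}/\pi_{\ell_1})_3$, $\psi := (\ell_2/\pi_{\ell_1})_3$, $\mu := (\pi_{\ell_1}/\ell_2)_3$, and using the conjugation relations $(\pi_{\ell_1}/\pi'_{\ell_1})_3 = \chi^{-1}$, $(\ell_2/\pi'_{\ell_1})_3 = \psi^{-1}$ and $(\pi'_{\ell_1}/\ell_2)_3 = \mu^{-1}$ (the last following from $(\ell_1/\ell_2)_3 = 1$, which holds because $\gcd(3,\ell_2-1) = 1$ forces every element of $\F_{\ell_2}^*$ to be a cube), the three local conditions reduce to
\[
\chi^{c-b}\psi^{d-b} = 1 \text{ at } \pi_{\ell_1}, \qquad \chi^{c-b}\psi^{c-d} = 1 \text{ at } \pi'_{\ell_1}, \qquad \mu^{b-c} = 1 \text{ at } \ell_2.
\]

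The expected main obstacle is distilling these via cubic reciprocity on primary primes of $\Z[\zeta]$. Applied to $(\pi_{\ell_1},\pi'_{\ell_1})$, reciprocity gives $(\pi_{\ell_1}/\pi'_{\ell_1})_3 = (\pi'_{\ell_1}/\pi_{\ell_1})_3 = \chi$, which combined with the conjugation identity $(\pi_{\ell_1}/\pi'_{\ell_1})_3 = \chi^{-1}$ forces $\chi^2 = 1$, hence $\chi = 1$ in $\mu_3$; applied to $(\pi_{\ell_1},\ell_2)$ it gives $\mu = \psi$. The three conditions then collapse to $\psi^{d-b} = \psi^{c-d} = \psi^{b-c} = 1$. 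If $\psi = 1$, i.e.\ $(\ell_2/\pi_{\ell_1})_3 = 1$, every $(b,c,d) \in (\Z/3)^3$ is admissible and $\dim_{\F_3}{\rm Sel}^\phi(E_{16n^2}/K) = 3$; otherwise the conditions force $b = c = d$, so ${\rm Sel}^\phi(E_{16n^2}/K) = \langle(\overline{n^2},\overline{n})\rangle$ has dimension $1$.
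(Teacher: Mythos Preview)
Your argument is correct and uses the same ingredients as the paper's proof (the containment ${\rm Sel}^\phi\subset\big(A_{S_n}^*/A_{S_n}^{*3}\big)_{N=1}$, the description of $\text{Im }\delta_{\phi,K_\p}$ from Lemma~\ref{imgofdeltaatp}(1), and the cubic-residue identities $\big(\frac{\pi_{\ell_1}}{\pi'_{\ell_1}}\big)_3=1$ and $\big(\frac{\pi_{\ell_1}}{\ell_2}\big)_3=\big(\frac{\ell_2}{\pi_{\ell_1}}\big)_3$), but the organisation differs in two respects. First, you exclude the $\zeta$-direction purely via the $\p$-condition, whereas the paper excludes it at $\ell_2$ or at $\pi_{\ell_1}$ depending on the residues modulo~$9$; your route is shorter and has the bonus that it simultaneously certifies that every element of $\langle g_2,g_3,g_4\rangle$ passes the $\p$-test, so you never revisit~$\p$. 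Second, in the case $\big(\frac{\ell_2}{\pi_{\ell_1}}\big)_3\neq 1$ the paper enumerates the thirteen order-$3$ subgroups of $\big(A_{S_n}^*/A_{S_n}^{*3}\big)_{N=1}/\langle(\overline{n}^2,\overline{n})\rangle$ and for each names a prime where the local condition fails, while you parametrise by exponents $(b,c,d)\in(\Z/3)^3$, reduce the three local conditions at $S_n$ to the linear system $\psi^{d-b}=\psi^{c-d}=\psi^{b-c}=1$ after invoking cubic reciprocity, and read off the solution space. The paper's enumeration is more concrete and makes it visible exactly which prime obstructs each candidate element; your linear-algebraic packaging is more uniform, scales better, and makes the dichotomy between $\psi=1$ and $\psi\neq 1$ transparent.
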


\proof
     We prove the result for \begin{small}$n= \ell_1\ell_2$\end{small} and mention that the proof in other cases for \begin{small}$n$\end{small} given above can be obtained via a similar computation. We have \begin{small}$S_{n}=\{\pi_{\ell_1}, \pi'_{\ell_1}, \ell_2\OO_K\}$\end{small} and by Lemma \ref{selmerbound}, \begin{small} ${\rm Sel}^\phi(E_{16n^2}/K) \subset \Big(\frac{A_{S_{n}}^*}{A_{S_{n}}^{*3}}\Big)_{N=1}$\end{small} and \begin{small}$\dim_{\F_3} {\rm Sel}^\phi(E_{16n^2}/K) \le |S_n|+1= 4$.\end{small}
 The group of \begin{small}$S_{n}$\end{small}-units in \begin{small}$K$\end{small} is \begin{small}$\OO^*_{K,S_{n}}=\langle \pm \zeta, \pi_{\ell_1}, \pi'_{\ell_1}, \ell_2 \rangle=\langle \pm \zeta, \pi_{\ell_1}, \ell_2, n \rangle$.\end{small} Hence, \begin{small} $\Big(\frac{A_{S_{n}}^*}{A_{S_{n}}^{*3}}\Big)_{N=1} = \big\langle(\overline{\zeta^2},\overline{\zeta}), (\overline{\ell_2^2},\overline{\ell_2}),  (\overline{\pi_{\ell_1}}^2,\overline{\pi_{\ell_1}}), (\overline{n}^2,\overline{n})\big\rangle$.\end{small} 
 Note that \begin{small}$(0,4n) \in E_{16n^2}(\Q)$\end{small} and  \begin{small}$\delta_{\phi,K_\q}(0,4n)=(\overline{n}^2,\overline{n}),$\end{small} \begin{small}$\forall \q$\end{small} by the Definition \eqref{eq:kummap} of the local Kummer maps. Hence, \begin{small}$(\overline{n}^2,\overline{n}) \in {\rm Sel}^\phi(E_{16n^2}/K)$.\end{small} 

  If \begin{small}$\ell_2 \equiv 2 \text{ or } 5 \pmod 9$,\end{small} then we know that \begin{small}$\zeta_9 \notin K_{\ell_2}$\end{small} and  \begin{small}$\text{Im } \delta_{\phi,K_{\ell_2}} \cap \big(A_{\ell_2}^*/A_{\ell_2}^{*3}\big)_{N=1}=\{(\overline{1},\overline{1})\}$\end{small} by Lemma \ref{imgofkummap}. Hence, \begin{small}$(\overline{\zeta^2},\overline{\zeta}) \notin \text{Im } \delta_{\phi,K_{\ell_2}}$.\end{small}  On the other hand, if  \begin{small}$\ell_2 \equiv 8 \pmod 9,$\end{small} then \begin{small}$\ell_1 \equiv 4 \text{ or } 7 \pmod 9$.\end{small} So by a similar argument \begin{small}$(\overline{\zeta^2},\overline{\zeta}) \notin \text{Im } \delta_{\phi,K_{\pi_{\ell_1}}}$.\end{small}  Thus in either cases, we get that \begin{small}$(\overline{\zeta^2},\overline{\zeta}) \notin {\rm Sel}^\phi(E_{16n^2}/K)$\end{small} and so \begin{small}$1 \le \dim_{\F_3} {\rm Sel}^\phi(E_{16n^2}/K) \le 3$.\end{small} 

  We now investigate the remaining generators \begin{small}$(\overline{\ell_2^2},\overline{\ell_2}) $\end{small} and \begin{small}$(\overline{\pi_{\ell_1}}^2,\overline{\pi_{\ell_1}})$\end{small} case by case. Recall that one can write \begin{small}$\ell_1=\frac{1}{4}(L^2+27M^2)$\end{small} and  \begin{small}$\pi_{\ell_1}=\frac{1}{2}(L+3\sqrt{-3}M)$\end{small}.
  
\begin{enumerate}
 \item Let us first consider the case \begin{small}$\big(\frac{\ell_2}{\pi_{\ell_1}}\big)_3= 1$\end{small} and 
  investigate the generator \begin{small}$(\overline{\ell_2^2},\overline{\ell_2})$\end{small} of \begin{small} $\Big(\frac{A_{S_{n}}^*}{A_{S_{n}}^{*3}}\Big)_{N=1}$.\end{small} Note that for primes \begin{small}$\q \notin S_{n} \cup \{\p\}$, $(\overline{\ell_2^2},\overline{\ell_2}) \in \text{Im } \delta_{\phi,K_\q}$\end{small} due to Lemma \ref{imgofkummap}. Thus, we only need to check whether or not \begin{small}$(\overline{\ell_2^2},\overline{\ell_2}) \in \text{Im } \delta_{\phi,K_\q}$\end{small} for \begin{small}$\q \in S_{n} \cup \{\p\}$.\end{small} Notice that the cubic residue \begin{small}$\big(\frac{\ell_2}{\pi_{\ell_1}}\big)_3=1$\end{small} implies \begin{small}$(\overline{\ell_2^2},\overline{\ell_2})=(\overline{1},\overline{1}) \in \big({A_{\pi_{\ell_1}}^*}/{A_{\pi_{\ell_1}}^{*3}}\big)_{N=1}$.\end{small} So, it is in \begin{small}$\text{Im } \delta_{\phi,K_{\pi_{\ell_1}}}$.\end{small} Similarly, \begin{small}$\big(\frac{\ell_2}{\pi'_{\ell_1}}\big)_3=1$\end{small} gives \begin{small}$(\overline{\ell_2^2},\overline{\ell_2}) \in \text{Im } \delta_{\phi,K_{\pi'_{\ell_1}}}$.\end{small} Now using \begin{small}$\big(\frac{\ell_1}{{\ell_2}}\big)_3=1$,\end{small} one gets \begin{small}$(\overline{\ell_2^2},\overline{\ell_2})=(\overline{n}^2,\overline{n}) \in \text{Im } \delta_{\phi,K_{\ell_2}}$.\end{small} Finally, \begin{small}$\ell_2 \equiv 2 \pmod 3$\end{small} implies \begin{small}$\ell_2^2 \in U^2_{K_\p}$\end{small}. So by Lemma \ref{imgofdeltaatp}(1), \begin{small}$(\overline{\ell_2^2},\overline{\ell_2}) \in \text{Im } \delta_{\phi,K_\p}$\end{small} and hence \begin{small}$(\overline{\ell_2^2},\overline{\ell_2}) \in {\rm Sel}^\phi(E_{16n^2}/K).$\end{small}

  Next, we draw our attention to the generator \begin{small}$(\overline{\pi_{\ell_1}}^2,\overline{\pi_{\ell_1}})$\end{small} of \begin{small} $\Big(\frac{A_{S_{n}}^*}{A_{S_{n}}^{*3}}\Big)_{N=1}$\end{small} and note that the cubic residue \begin{small}$\big(\frac{\pi_{\ell_1}}{\pi'_{\ell_1}}\big)_3=1$\end{small} implies \begin{small}$(\overline{\pi_{\ell_1}}^2,\overline{\pi_{\ell_1}})=(\overline{1},\overline{1}) \in \big({A_{\pi'_{\ell_1}}^*}/{A_{\pi'_{\ell_1}}^{*3}}\big)_{N=1}$.\end{small} This gives us   \begin{small}$(\overline{\pi_{\ell_1}}^2,\overline{\pi_{\ell_1}}) \in \text{Im } \delta_{\phi,K_{\pi'_{\ell_1}}}$.\end{small} Similarly, \begin{small}$\big(\frac{\pi_{\ell_1}}{\ell_2}\big)_3=1$\end{small} gives  \begin{small}$(\overline{\pi_{\ell_1}}^2,\overline{\pi_{\ell_1}}) \in \text{Im } \delta_{\phi,K_{{\ell_2}}}$.\end{small} Additionally,  one has \begin{small}$(\overline{\pi_{\ell_1}}^2,\overline{\pi_{\ell_1}})=(\overline{n}^2,\overline{n}) \in \text{Im } \delta_{\phi,K_{\pi_{\ell_1}}}$\end{small} due to the assumption \begin{small}$\big(\frac{\ell_2}{\pi_{\ell_1}}\big)_3= 1$.\end{small} Finally, writing \begin{small}$\pi_{\ell_1}=\frac{1}{2}(L+3\sqrt{-3}M)$\end{small}, we have \begin{small}$\pi_{\ell_1}^2 \in U^2_{K_\p}$\end{small} which implies \begin{small}$(\overline{\pi_{\ell_1}}^2,\overline{\pi_{\ell_1}}) \in \text{Im } \delta_{\phi,K_\p}$\end{small} by Lemma \ref{imgofdeltaatp}(1).  Hence, \begin{small}$(\overline{\pi_{\ell_1}}^2,\overline{\pi_{\ell_1}}) \in {\rm Sel}^\phi(E_{16n^2}/K)$\end{small} and we get \begin{small}$\dim_{\F_3}{\rm Sel}^\phi(E_{16n^2}/K)=3$\end{small}  when \begin{small}$n=\ell_1\ell_2 \not\equiv 8 \pmod 9$\end{small} and \begin{small}$\big(\frac{\ell_2}{\pi_{\ell_1}}\big)_3= 1$.\end{small}\\
  
 \item We now consider the case \begin{small}$\big(\frac{\ell_2}{\pi_{\ell_1}}\big)_3 \neq 1$.\end{small} There are $13$ subgroups of order $3$ of \begin{small} $\Big(\frac{A_{S_{n_{ }}}^*}{A_{S_{n}}^{*3^{ }}}\Big)_{N=1} \Big/ \big\langle(\overline{n}^2,\overline{n})\big\rangle$\end{small}  corresponding to the elements: \begin{small}$\zeta$, $\ell_2$, $ \pi_{\ell_1},$\end{small} \begin{small}$ \zeta\ell_2 $, $ \zeta\ell_2^2$, $ \zeta\pi_{\ell_1}$, $ \zeta\pi_{\ell_1}^2$, $ \ell_2\pi_{\ell_1}$, $ \ell_2^2\pi_{\ell_1}$, $ \zeta\ell_2\pi_{\ell_1},$\end{small}  \begin{small}$ \zeta^2\ell_2\pi_{\ell_1}$, $ \zeta\ell_2^2\pi_{\ell_1}$\end{small} and \begin{small}$\zeta\ell_2\pi_{\ell_1}^2$.\end{small} Thus, to arrive at the conclusion that \begin{small}$\dim_{\F_3}{\rm Sel}^\phi(E_{16n^2}/K)=1$,\end{small} one requires to show that none of these $13$ subgroups is contained in \begin{small}${\rm Sel}^\phi(E_{16n^2}/K)$.\end{small} The fact that \begin{small}$(\overline{\zeta^2},\overline{\zeta}) \notin {\rm Sel}^\phi(E_{16n^2}/K)$ \end{small} is already discussed above. One can easily see that \begin{small}$(\overline{\pi_{\ell_1}}^2,\overline{\pi_{\ell_1}}) \neq (\overline{1},\overline{1}) \in \big(A_{\ell_2}^*/A_{\ell_2}^{*3}\big)_{N=1}$,\end{small} since \begin{small}$\big(\frac{\ell_2}{\pi_{\ell_1}}\big)_3=\big(\frac{\pi_{\ell_1}}{{\ell_2}}\big)_3 \neq 1$\end{small} by assumption. Thus, \begin{small}$(\overline{\pi_{\ell_1}}^2,\overline{\pi_{\ell_1}}) \notin \text{Im } \delta_{\phi,K_{\ell_2}}$.\end{small} Similar arguments using \begin{small}$\big(\frac{\ell_2}{\pi'_{\ell_1}}\big)_3 \neq 1$\end{small} and \begin{small}$\big(\frac{\pi_{\ell_1}}{\pi'_{\ell_1}}\big)_3=1$\end{small} prove that  \begin{small}$(\overline{\ell_2^2},\overline{\ell_2})$, $(\overline{\ell_2^2\pi_{\ell_1}^2},\overline{\ell_2\pi_{\ell_1}})$\end{small} and \begin{small}$(\overline{\ell_2\pi_{\ell_1}^2},\overline{\ell_2^2\pi_{\ell_1}})$\end{small} are all not in \begin{small}$\text{Im } \delta_{\phi,K_{\pi'_{\ell_1}}}$.\end{small} For all the remaining elements, one can see that they are not in \begin{small}${U^2_{K_\p}}/{U^4_{K_\p}}.$\end{small} Hence, they do not belong to \begin{small}$\text{Im } \delta_{\phi,K_{\p}}$\end{small} by Lemma \ref{imgofdeltaatp}(1). Therefore, we deduce that \begin{small}$\dim_{\F_3}{\rm Sel}^\phi(E_{16n^2}/K)=1$. \end{small}
\qed
\end{enumerate}

\begin{proposition}\label{sylvesterpq3}
Let \begin{small}$n$\end{small} be an integer of the form either  \begin{small}$\ell_1\ell_2$\end{small} or \begin{small}$\ell_1^2\ell_2$\end{small} or \begin{small}$\ell_1\ell_2^2$\end{small} or \begin{small}$\ell_1^2\ell_2^2$\end{small}, where \begin{small}$\ell_1 \equiv 1 \pmod 3$\end{small} and \begin{small}$\ell_2 \equiv 2 \pmod 3$\end{small} are primes such that \begin{small}$n \equiv \pm 1 \pmod 9$.\end{small} For the $3$-isogeny \begin{small}$\phi: E_{16n^2} \to E_{16n^2}$\end{small} defined in \eqref{eq:defofphi}, we have \begin{small}$$\dim_{\F_3} {\rm Sel}^\phi(E_{16n^2}/K) =\begin{cases} 4,& \text{ if } \ell_1 \equiv 1 \pmod 9, \enspace \ell_2 \equiv 8 \pmod 9  \text{ and } \big(\frac{\ell_2}{\pi_{\ell_1}}\big)_3 = 1, \\ 2, & \text{ otherwise.}\end{cases}$$\end{small}
\end{proposition}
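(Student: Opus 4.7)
\medskip

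\noindent\textbf{Proof plan.} The plan is to follow the strategy of Proposition~\ref{sylvesterpq2}, the essential new ingredient being that under the hypothesis $n \equiv \pm 1 \pmod 9$ the local image at $\p$ is governed by Lemma~\ref{imgofdeltaatp}(2) rather than~(1). I would treat $n = \ell_1\ell_2$ in detail; the other three shapes of $n$ follow by analogous computations with the exponent bookkeeping adjusted. Since $S_n = \{\pi_{\ell_1}, \pi'_{\ell_1}, \ell_2\OO_K\}$ has $|S_n| = 3$, Lemma~\ref{selmerbound} gives $\dim_{\F_3}{\rm Sel}^\phi(E_{16n^2}/K) \le 4$, and I would take as an $\F_3$-basis of the ambient group $\big(A_{S_n}^*/A_{S_n}^{*3}\big)_{N=1}$ the elements $(\overline{\zeta^2},\overline{\zeta})$, $(\overline{\pi_{\ell_1}^2},\overline{\pi_{\ell_1}})$, $(\overline{\ell_2^2},\overline{\ell_2})$, $(\overline{n^2},\overline{n})$. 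The last is automatically in ${\rm Sel}^\phi$ via $\delta_{\phi,K_\q}(0,4n)$, so $\dim_{\F_3}{\rm Sel}^\phi \ge 1$.

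For each of the three remaining basis elements, I would then test membership in $\text{Im}\,\delta_{\phi,K_\q}$ at $\q \in \{\pi_{\ell_1}, \pi'_{\ell_1}, \ell_2, \p\}$ (the other primes are automatic by Lemma~\ref{imgofkummap}(1)). For $(\overline{\zeta^2},\overline{\zeta})$: at $\p$ it lies in the image by Lemma~\ref{imgofdeltaatp}(2); at $\pi_{\ell_1}$ (resp.\ $\ell_2$) it lies in the image iff $\zeta$ is a cube in $K_{\pi_{\ell_1}}^* \cong \Q_{\ell_1}^*$ (resp.\ $K_{\ell_2}^*$), equivalently iff $\ell_1 \equiv 1 \pmod 9$ (resp.\ $\ell_2 \equiv 8 \pmod 9$). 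The constraint $n \equiv \pm 1 \pmod 9$ with $\ell_1 \in \{1,4,7\}$ and $\ell_2 \in \{2,5,8\} \pmod 9$ moreover forces $(\ell_1,\ell_2) \pmod 9 \in \{(1,8),(4,2),(7,5)\}$, so in fact $\ell_1 \equiv 1 \pmod 9 \iff \ell_2 \equiv 8 \pmod 9$. For $(\overline{\ell_2^2},\overline{\ell_2})$ and $(\overline{\pi_{\ell_1}^2},\overline{\pi_{\ell_1}})$: at the finite primes in $S_n$, cubic reciprocity applied to primary representatives (together with the fact that every element of $\F_{\ell_2}^*$ is a cube in $\F_{\ell_2^2}^*$ because $\ell_2 \equiv 2 \pmod 3$) collapses the conditions to the single residue equation $(\ell_2/\pi_{\ell_1})_3 = 1$; at $\p$, writing $\pi_{\ell_1}$ primary and using $\pi_{\ell_1}\pi'_{\ell_1} = \ell_1$ together with the identity $\p^2 + \overline{\p}^2 = 3$, a short expansion gives $\pi_{\ell_1} \equiv -1 + a\p^2 \pmod{\p^3}$ with $a \in \F_3$ determined by $\ell_1 \pmod 9$, so $(\overline{\pi_{\ell_1}^2},\overline{\pi_{\ell_1}}) \in \text{Im}\,\delta_{\phi,K_\p}$ iff $\ell_1 \equiv 1 \pmod 9$, and similarly $(\overline{\ell_2^2},\overline{\ell_2}) \in \text{Im}\,\delta_{\phi,K_\p}$ iff $\ell_2 \equiv 8 \pmod 9$.

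Combining these local conditions, the three nontrivial generators all lie in ${\rm Sel}^\phi$ precisely when $\ell_1 \equiv 1 \pmod 9$ (equivalently $\ell_2 \equiv 8 \pmod 9$) and $(\ell_2/\pi_{\ell_1})_3 = 1$, giving $\dim_{\F_3}{\rm Sel}^\phi = 4$. The hardest part will be showing that in every other scenario the dimension drops all the way to $2$ and not merely to $3$. For this I would enumerate the $13$ subgroups of order $3$ in the quotient $\big(A_{S_n}^*/A_{S_n}^{*3}\big)_{N=1}/\langle(\overline{n^2},\overline{n})\rangle$ and verify, case by case on which of the two conditions fails, that exactly one of them satisfies all local conditions---namely the subgroup generated by the unique product whose local obstructions cancel, typically of the form $(\overline{\zeta^2\pi_{\ell_1}^2},\overline{\zeta\pi_{\ell_1}})$, $(\overline{\zeta^2\ell_2^2},\overline{\zeta\ell_2})$ or $(\overline{\pi_{\ell_1}^2\ell_2^2},\overline{\pi_{\ell_1}\ell_2})$ up to inverted exponents. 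The parity $\dim_{\F_3}{\rm Sel}^\phi \equiv k_2+1 \equiv 0 \pmod 2$ predicted by the heuristic in the Introduction will serve as a useful consistency check. The main technical subtlety throughout is the consistent use of cubic reciprocity with primary normalizations and the careful extraction of the $U^2_{K_\p}/U^3_{K_\p}$-component of units at $\p$.

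\medskip
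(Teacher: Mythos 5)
Your proposal follows essentially the same route as the paper's proof: the same ambient group $\big(\tfrac{A_{S_n}^*}{A_{S_n}^{*3}}\big)_{N=1}$ with generators $(\overline{\zeta^2},\overline{\zeta})$, $(\overline{\pi_{\ell_1}^2},\overline{\pi_{\ell_1}})$, $(\overline{\ell_2^2},\overline{\ell_2})$, $(\overline{n}^2,\overline{n})$, the same (correct) local membership criteria at $\pi_{\ell_1},\pi'_{\ell_1},\ell_2\OO_K$ and $\p$ via Lemmas \ref{imgofkummap} and \ref{imgofdeltaatp}(2) together with cubic reciprocity, the same conclusion that the dimension is $4$ exactly when $\ell_1\equiv 1$, $\ell_2\equiv 8 \pmod 9$ and $\big(\frac{\ell_2}{\pi_{\ell_1}}\big)_3=1$, and the same final step of testing the $13$ order-$3$ subgroups of the quotient by $\langle(\overline{n}^2,\overline{n})\rangle$ to pin the remaining cases at dimension $2$. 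One caution on the part you defer: the unique surviving subgroup is not always of the two-factor shapes you predict --- in the paper, when $(\ell_1,\ell_2)\equiv(4,2)$ or $(7,5)\pmod 9$ and $\big(\frac{\ell_2}{\pi_{\ell_1}}\big)_3=\zeta$ or $\zeta^2$ the survivor is an element such as $(\overline{\zeta^2\ell_2^2\pi_{\ell_1}},\overline{\zeta\ell_2\pi_{\ell_1}^2})$ involving $\zeta$ and both primes, and when $\ell_1\equiv 1$, $\ell_2\equiv 8\pmod 9$ with $\big(\frac{\ell_2}{\pi_{\ell_1}}\big)_3\neq 1$ it is $(\overline{\zeta^2},\overline{\zeta})$ itself --- but since your plan checks all $13$ subgroups against the correct local conditions, this mis-prediction affects only the guess, not the method.
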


\begin{proof}
    We prove the result for $n= \ell_1\ell_2$. We know that \begin{small}$S_{n}=\{\pi_{\ell_1}, \pi'_{\ell_1}, \ell_2\OO_K\}$\end{small} and by Lemma \ref{selmerbound}, \begin{small} ${\rm Sel}^\phi(E_{16n^2}/K) \subset \Big(\frac{A_{S_{n}}^*}{A_{S_{n}}^{*3}}\Big)_{N=1}=\big\langle(\overline{\zeta^2},\overline{\zeta}), (\overline{\ell_2^2},\overline{\ell_2}),  (\overline{\pi_{\ell_1}^2},\overline{\pi_{\ell_1}}), (\overline{n}^2,\overline{n})\big\rangle$\end{small} and \begin{small}$\dim_{\F_3} {\rm Sel}^\phi(E_{16n^2}/K) \le |S_n|+1= 4$.\end{small} 
 As in Prop. \ref{sylvesterpq2}, we have  \begin{small}$(0,4n) \in E_{16n^2}(\Q)$\end{small} and  \begin{small}$\delta_{\phi,K_\q}(0,4n)=(\overline{n}^2,\overline{n})$\end{small} for all \begin{small}$\q$.\end{small} Hence, \begin{small}$(\overline{n}^2,\overline{n}) \in {\rm Sel}^\phi(E_{16n^2}/K)$\end{small} and  \begin{small}$1 \le \dim_{\F_3} {\rm Sel}^\phi(E_{16n^2}/K) \le 4$.\end{small}
\begin{enumerate}
    \item Let us first study the case \begin{small}$\ell_1 \equiv 1 \pmod 9$\end{small} and \begin{small}$\ell_2 \equiv 8 \pmod 9$.\end{small} Note that \begin{small}$\zeta_9$\end{small} belongs to each of  \begin{small}$\OO_{K_{\ell_2}}^*$, $\OO_{K_{\pi_{\ell_1}}}^*$\end{small} and  \begin{small}$\OO_{K_{\pi'_{\ell_1}}}^*$.\end{small} Hence, \begin{small}$(\overline{\zeta^2},\overline{\zeta})$\end{small} belongs to each of \begin{small}$\text{Im } \delta_{\phi,K_{\ell_2}}$, $\text{Im } \delta_{\phi,K_{\pi_{\ell_1}}}$\end{small} and \begin{small}$\text{Im } \delta_{\phi,K_{\pi'_{\ell_1}}}$.\end{small} Also by Lemma \ref{imgofdeltaatp}(2), we know that \begin{small}$(\overline{\zeta^2},\overline{\zeta}) \in \text{Im } \delta_{\phi,K_{\p}}$.\end{small} Thus, \begin{small}$(\overline{\zeta^2},\overline{\zeta}) \in {\rm Sel}^\phi(E_{16n^2}/K)$\end{small} in this situation.

    Now if \begin{small}$\big(\frac{\ell_2}{\pi_{\ell_1}}\big)_3=1$\end{small}, then giving a similar argument as in the proof of (1) in Prop. \ref{sylvesterpq2}, we obtain that both  \begin{small}$(\overline{\ell_2^2},\overline{\ell_2})$\end{small} and \begin{small}$(\overline{\pi_{\ell_1}}^2,\overline{\pi_{\ell_1}})$\end{small} belong to \begin{small}${\rm Sel}^\phi(E_{16n^2}/K)$.\end{small} Hence, \begin{small}$\dim_{\F_3} {\rm Sel}^\phi(E_{16n^2}/K)=4$\end{small} when \begin{small}$\ell_1 \equiv 1 \pmod 9$, $\ell_2 \equiv 8 \pmod 9$\end{small} and \begin{small}$\big(\frac{\ell_2}{\pi_{\ell_1}}\big)_3=1.$\end{small}

    Next, let \begin{small}$\big(\frac{\ell_2}{\pi_{\ell_1}}\big)_3 \neq 1 $.\end{small} Then there are four subgroups of \begin{small} $\Big(\frac{A_{S_{n}}^*}{A_{S_{n}}^{*3}}\Big)_{N=1} \Big/ \big\langle(\overline{n}^2,\overline{n}), (\overline{\zeta^2},\overline{\zeta})\big\rangle$\end{small} of order $3$ corresponding to the elements \begin{small}$\ell_2,$ $ \pi_{\ell_1}, $ $\ell_2\pi_{\ell_1}$\end{small} and \begin{small}$\ell_2\pi_{\ell_1}^2$\end{small}. 
    To conclude that \begin{small}$\dim_{\F_3} {\rm Sel}^\phi(E_{16n^2}/K) = 2$,\end{small} we need to show that \begin{small}${\rm Sel}^\phi(E_{16n^2}/K)$\end{small} does not contain any of these subgroups. 
     As \begin{small}$\big(\frac{\ell_2}{\pi_{\ell_1}}\big)_3 \neq 1$\end{small} by assumption, we get that \begin{small}$(\overline{\ell_2^2},\overline{\ell_2}) \notin \text{Im } \delta_{\phi,K_{\pi_{\ell_1}}}$.\end{small} Similarly, \begin{small}$\big(\frac{\ell_2}{\pi_{\ell_1}}\big)_3=\big(\frac{\pi_{\ell_1}}{\ell_2}\big)_3$\end{small} implies \begin{small}$(\overline{\pi_{\ell_1}}^2,\overline{\pi_{\ell_1}}) \notin \text{Im } \delta_{\phi,K_{{\ell_2}}}$.\end{small} Next, using \begin{small}$\big(\frac{\pi_{\ell_1}}{\pi'_{\ell_1}}\big)_3 = 1 $\end{small}  we  identify \begin{small}$(\overline{\ell_2^2\pi_{\ell_1}^2},\overline{\ell_2\pi_{\ell_1}})$\end{small} with \begin{small}$(\overline{{\ell_2}^2},\overline{{\ell_2}})$\end{small} in \begin{small}$\big({A_{\pi'_{\ell_1}}^*}/{A_{\pi'_{\ell_1}}^{*3}}\big)_{N=1}.$\end{small} Then \begin{small}$(\overline{{\ell_2}^2},\overline{{\ell_2}}) \neq (\overline{1},\overline{1})$\end{small} in \begin{small}$\big({A_{\pi'_{\ell_1}}^*}/{A_{\pi'_{\ell_1}}^{*3}}\big)_{N=1}$\end{small} gives  \begin{small}$(\overline{{\ell_2}^2\pi_{\ell_1}^2},\overline{{\ell_2}\pi_{\ell_1}}) \notin \text{Im } \delta_{\phi,K_{\pi'_{\ell_1}}}$.\end{small} A similar argument shows that  \begin{small}$ (\overline{\ell_2^2\pi_{\ell_1}},\overline{\ell_2\pi_{\ell_1}^2}) \notin \text{Im } \delta_{\phi,K_{\pi'_{\ell_1}}}.$\end{small} Thus one can deduce that \begin{small}$\dim_{\F_3} {\rm Sel}^\phi(E_{16n^2}/K) =2$,\end{small} when \begin{small}$\ell_1 \equiv 1 \pmod 9$,\end{small} \begin{small}$\ell_2 \equiv 8 \pmod 9$\end{small} and \begin{small}$\big(\frac{\ell_2}{\pi_{\ell_1}}\big)_3 \neq 1 $.\end{small}\\
     
     \item Let \begin{small}$\ell_1 \equiv 4 \pmod 9$\end{small} and \begin{small}$\ell_2 \equiv 2 \pmod 9$.\end{small} Proving that \begin{small}$\dim_{\F_3} {\rm Sel}^\phi(E_{16n^2}/K) =2$\end{small} involves checking of the $13$ subgroups mentioned in the proof of (2) in Prop. \ref{sylvesterpq2} for their containment in \begin{small}${\rm Sel}^\phi(E_{16n^2}/K)$.\end{small} We observe the following:
     \begin{enumerate}
         \item If \begin{small}$\big(\frac{\ell_2}{\pi_{\ell_1}}\big)_3 = 1, $\end{small} then \begin{small}$ (\overline{\ell_2^2\pi_{\ell_1}},\overline{\ell_2\pi_{\ell_1}^2}) \in {\rm Sel}^\phi(E_{16n^2}/K)$.\end{small} The elements  \begin{small}$(\overline{\zeta^2\ell_2^2\pi_{\ell_1}^2},\overline{\zeta\ell_2\pi_{\ell_1}}),$\end{small}  \begin{small}$ (\overline{\zeta\ell_2^2\pi_{\ell_1}^2},\overline{\zeta^2\ell_2\pi_{\ell_1}})$, $ (\overline{\zeta^2\ell_2\pi_{\ell_1}^2},\overline{\zeta\ell_2^2\pi_{\ell_1}})$\end{small}    \begin{small}$(\overline{\zeta^2\ell_2^2\pi_{\ell_1}},\overline{\zeta\ell_2\pi_{\ell_1}^2})$\end{small} and \begin{small}$(\overline{\zeta^2},\overline{\zeta})$\end{small} are not in \begin{small}$\text{Im } \delta_{\phi,K_{\pi'_{\ell_1}}}$,\end{small} whereas all the other elements are not in \begin{small}$\text{Im } \delta_{\phi,K_{\p}}$.\end{small} So, \begin{small}$\dim_{\F_3} {\rm Sel}^\phi(E_{16n^2}/K)=2$.\end{small} 
         
         \item If \begin{small}$\big(\frac{\ell_2}{\pi_{\ell_1}}\big)_3 =\zeta, $\end{small} then \begin{small}$ (\overline{\zeta^2\ell_2^2\pi_{\ell_1}},\overline{\zeta\ell_2\pi_{\ell_1}^2}) \in {\rm Sel}^\phi(E_{16n^2}/K)$.\end{small} Note that \begin{small}$(\overline{\pi_{\ell_1}}^2,\overline{\pi_{\ell_1}}) \notin \text{Im } \delta_{\phi,K_{{\ell_2}}}$\end{small} and \begin{small}$(\overline{\zeta^2\ell_2^2\pi_{\ell_1}^2},\overline{\zeta\ell_2\pi_{\ell_1}})$, $(\overline{\zeta^2\ell_2^2},\overline{\zeta\ell_2}) \notin \text{Im } \delta_{\phi,K_{\pi_{\ell_1}}}$,\end{small} while all the other elements are not in \begin{small}$\text{Im } \delta_{\phi,K_{\pi'_{\ell_1}}}$.\end{small} Thus, \begin{small}$ \dim_{\F_3} {\rm Sel}^\phi(E_{16n^2}/K)=2$.\end{small} 
         
         \item If \begin{small}$\big(\frac{\ell_2}{\pi_{\ell_1}}\big)_3 =\zeta^2, $\end{small} then \begin{small}$ (\overline{\zeta^2\ell_2\pi_{\ell_1}^2},\overline{\zeta\ell_2^2\pi_{\ell_1}}) \in {\rm Sel}^\phi(E_{16n^2}/K)$.\end{small} Notice that \begin{small}$(\overline{\pi_{\ell_1}}^2,\overline{\pi_{\ell_1}}) \notin \text{Im } \delta_{\phi,K_{{\ell_2}}}$\end{small} and \begin{small}$(\overline{\zeta\ell_2^2\pi_{\ell_1}^2},\overline{\zeta^2\ell_2\pi_{\ell_1}})$, $(\overline{\zeta\ell_2^2},\overline{\zeta^2\ell_2}) \notin \text{Im } \delta_{\phi,K_{\pi_{\ell_1}}}$,\end{small} whereas all the remaining elements are not in \begin{small}$\text{Im } \delta_{\phi,K_{\pi'_{\ell_1}}}$.\end{small} Thus, \begin{small}$ \dim_{\F_3} {\rm Sel}^\phi(E_{16n^2}/K)=2$.\end{small} \\
     \end{enumerate}
     
     \item Let \begin{small}$\ell_1 \equiv 7 \pmod 9$\end{small} and \begin{small}$\ell_2 \equiv 5 \pmod 9$.\end{small} Checking the $13$ subgroups mentioned in the proof of (2) in Prop. \ref{sylvesterpq2} for their containment in \begin{small}${\rm Sel}^\phi(E_{16n^2}/K)$\end{small} yields: 
     \begin{enumerate}
         \item If \begin{small}$\big(\frac{\ell_2}{\pi_{\ell_1}}\big)_3 = 1, $\end{small} then \begin{small}$ (\overline{\ell_2^2\pi_{\ell_1}},\overline{\ell_2\pi_{\ell_1}^2}) \in {\rm Sel}^\phi(E_{16n^2}/K)$.\end{small} Note that \begin{small}$(\overline{\zeta^2\ell_2^2},\overline{\zeta\ell_2})$,$(\overline{\zeta\ell_2^2},\overline{\zeta^2\ell_2}) \notin \text{Im } \delta_{\phi,K_{\pi_{\ell_1}}}$\end{small} and \begin{small}$(\overline{\ell_2^2},\overline{\ell_2})$, $(\overline{\pi_{\ell_1}^2},\overline{\pi_{\ell_1}})$, $(\overline{\ell_2^2\pi_{\ell_1}^2},\overline{\ell_2\pi_{\ell_1}}) \notin \text{Im } \delta_{\phi,K_\p}$,\end{small} whereas all the remaining elements are not in \begin{small}$\text{Im } \delta_{\phi,K_{\pi'_{\ell_1}}}$.\end{small} Thus, \begin{small}$ \dim_{\F_3} {\rm Sel}^\phi(E_{16n^2}/K)=2$.\end{small} 
         
         \item If \begin{small}$\big(\frac{\ell_2}{\pi_{\ell_1}}\big)_3 =\zeta, $\end{small} then \begin{small}$ (\overline{\zeta^2\ell_2\pi_{\ell_1}^2},\overline{\zeta\ell_2^2\pi_{\ell_1}}) \in {\rm Sel}^\phi(E_{16n^2}/K)$.\end{small} Observe that \begin{small}$(\overline{\pi_{\ell_1}^2},\overline{\pi_{\ell_1}}) \notin \text{Im } \delta_{\phi,K_{{\ell_2}}}$,\end{small}  \begin{small}$(\overline{\zeta\ell_2^2\pi_{\ell_1}^2},\overline{\zeta^2\ell_2\pi_{\ell_1}}) \notin \text{Im } \delta_{\phi,K_\p}$\end{small} and \begin{small}$
        (\overline{\zeta\ell_2^2},\overline{\zeta^2\ell_2}) \notin \text{Im } \delta_{\phi,K_{\pi_{\ell_1}}}$,\end{small} whereas all the other elements are not in \begin{small}$\text{Im } \delta_{\phi,K_{\pi'_{\ell_1}}}$.\end{small} Thus, \begin{small}$ \dim_{\F_3} {\rm Sel}^\phi(E_{16n^2}/K)=2$.\end{small} 
         
         \item If \begin{small}$\big(\frac{\ell_2}{\pi_{\ell_1}}\big)_3 =\zeta^2, $\end{small} then \begin{small}$ (\overline{\zeta^2\ell_2^2\pi_{\ell_1}},\overline{\zeta\ell_2\pi_{\ell_1}^2}) \in {\rm Sel}^\phi(E_{16n^2}/K)$.\end{small} Notice that \begin{small}$(\overline{\pi_{\ell_1}^2},\overline{\pi_{\ell_1}}) \notin \text{Im } \delta_{\phi,K_{{\ell_2}}}$,\end{small}  \begin{small}$(\overline{\zeta^2\ell_2^2\pi_{\ell_1}^2},\overline{\zeta\ell_2\pi_{\ell_1}}) \notin \text{Im } \delta_{\phi,K_\p}$\end{small} and \begin{small}$
        (\overline{\zeta^2\ell_2^2},\overline{\zeta\ell_2}) \notin \text{Im } \delta_{\phi,K_{\pi_{\ell_1}}}$,\end{small} while all the remaining elements are not in \begin{small}$\text{Im } \delta_{\phi,K_{\pi'_{\ell_1}}}$.\end{small} Thus, \begin{small}$ \dim_{\F_3} {\rm Sel}^\phi(E_{16n^2}/K)=2$.\end{small} 
     \end{enumerate}
\end{enumerate}
This shows that \begin{small}$ \dim_{\F_3} {\rm Sel}^\phi(E_{16n^2}/K)=2$\end{small} in all the cases, except when \begin{small}$\ell_1 \equiv 1 \pmod 9$, $\ell_2 \equiv 8 \pmod 9 \text{ and } \big(\frac{\ell_2}{\pi_{\ell_1}}\big)_3 = 1$;\end{small} in which case it is $4$. This completes the proof for \begin{small}$n=\ell_1\ell_2$\end{small}. 
\end{proof}

\subsection{The Case $k_1=2$, that is, $\ell_1 \equiv \ell_2 \equiv 1 \pmod 3$}
We are now only left with the case when both \begin{small}$\ell_1$\end{small} and \begin{small}$\ell_2$\end{small} are primes congruent to \begin{small}$1 \pmod 3$.\end{small} Let \begin{small}$\ell_i$\end{small} split as \begin{small}$\ell_i=\pi_{\ell_i}\pi'_{\ell_i}$\end{small} for \begin{small}$i=1, 2$\end{small} in \begin{small}$K$\end{small} and denote by \begin{small}$\big(\frac{\cdot}{\cdot}\big)_3$\end{small} the cubic residue symbol.

\begin{proposition}\label{sylvesterpq4}
 Let \begin{small}$n$\end{small} be an integer of the form  \begin{small}$\ell_1\ell_2$\end{small} or \begin{small}$\ell_1^2\ell_2$\end{small} or \begin{small}$\ell_1^2\ell_2^2$,\end{small} where \begin{small}$\ell_1 \equiv \ell_2 \equiv 1 \pmod 3$\end{small} are distinct primes and \begin{small}$\phi: E_{16n^2} \to E_{16n^2}$\end{small} be the $3$-isogeny  defined in \eqref{eq:defofphi}. Then
 \begin{enumerate}
     \item If \begin{small}$n \equiv  1 \pmod 9$,\end{small} then 
     \begin{small}$$\dim_{\F_3} {\rm Sel}^\phi(E_{16n^2}/K) =\begin{cases}
     5, & \text{if } \ell_1 \equiv \ell_2 \equiv 1 \pmod 9,\text{  }  \big(\frac{\pi_{\ell_1}}{\pi_{\ell_2}}\big)_3 = 1 \text{ and } \big(\frac{\pi'_{\ell_1}}{\pi_{\ell_2}}\big)_3 = 1, \\
     3, & \text{otherwise.}
     \end{cases}$$ \end{small}
     \item If \begin{small}$n \equiv 4 \text{ or } 7 \pmod 9$,\end{small} then 
     \begin{small}$$\dim_{\F_3} {\rm Sel}^\phi(E_{16n^2}/K) =\begin{cases}
     4, & \text{if }  \big(\frac{\pi_{\ell_1}}{\pi_{\ell_2}}\big)_3 = 1 \text{ and } \big(\frac{\pi'_{\ell_1}}{\pi_{\ell_2}}\big)_3 = 1, \\
     2, & \text{otherwise.}
     \end{cases}$$ \end{small}
 \end{enumerate}
\end{proposition}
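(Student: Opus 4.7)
The plan is to adapt the strategy used in Propositions \ref{sylvesterpq2} and \ref{sylvesterpq3}: exhibit a generating set of $\big(A^*_{S_n}/A^{*3}_{S_n}\big)_{N=1}$ and, for each candidate element, test membership in $\text{Im}\,\delta_{\phi,K_\q}$ at every $\q\in S_n\cup\{\p\}$ via Lemmas \ref{imgofkummap} and \ref{imgofdeltaatp}. I would carry out the details only for $n=\ell_1\ell_2$, the other two forms of $n$ being handled by the same argument. Both primes $\ell_i\equiv 1\pmod 3$ split in $K$ as $\ell_i=\pi_{\ell_i}\pi'_{\ell_i}$, so $S_n=\{\pi_{\ell_1},\pi'_{\ell_1},\pi_{\ell_2},\pi'_{\ell_2}\}$, $|S_n|=4$, and Lemma \ref{selmerbound} gives the a priori bound $\dim_{\F_3}{\rm Sel}^\phi(E_{16n^2}/K)\le 5$. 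A convenient basis of the ambient group is $\{(\overline{\zeta^2},\overline\zeta),\,(\overline{\pi_{\ell_1}^2},\overline{\pi_{\ell_1}}),\,(\overline{(\pi'_{\ell_1})^2},\overline{\pi'_{\ell_1}}),\,(\overline{\pi_{\ell_2}^2},\overline{\pi_{\ell_2}}),\,(\overline{(\pi'_{\ell_2})^2},\overline{\pi'_{\ell_2}})\}$, and $(\overline{n^2},\overline n)$ is always a Selmer class coming from $(0,4n)\in E_{16n^2}(\Q)$.

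I would then analyze the local conditions on each basis element separately. At primes above $\ell_i$, the identity $\big(\frac{\pi_{\ell_i}}{\pi'_{\ell_i}}\big)_3=1$ (already used in the proof of Proposition \ref{sylvesterpq2}) makes the intra-$\ell_i$ conditions automatic, while the cross conditions between $\ell_1$ and $\ell_2$ reduce, via the norm identity $\pi_{\ell_1}\pi'_{\ell_1}\pi_{\ell_2}\pi'_{\ell_2}=n$ and complex conjugation, to the two symmetric conditions $\big(\frac{\pi_{\ell_1}}{\pi_{\ell_2}}\big)_3=1$ and $\big(\frac{\pi'_{\ell_1}}{\pi_{\ell_2}}\big)_3=1$ appearing in the statement. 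The local condition at $\p$ is where cases (1) and (2) diverge: in case (2) one has $\text{Im}\,\delta_{\phi,K_\p}\cong U^2_{K_\p}/U^4_{K_\p}$ by Lemma \ref{imgofdeltaatp}(1), and a direct computation using $\pi_{\ell_i}=(L_i+3M_i\sqrt{-3})/2$ together with $L_i^2\equiv 1\pmod 3$ shows $\pi_{\ell_i}^2\in U^2_{K_\p}$, so the $\p$-condition is automatic for every $\pi$-generator and only excludes $(\overline{\zeta^2},\overline\zeta)$. In case (1), Lemma \ref{imgofdeltaatp}(2) gives $\text{Im}\,\delta_{\phi,K_\p}=\langle(\overline{\zeta^2},\overline\zeta),(\overline{(1+\p^3)^2},\overline{1+\p^3})\rangle$; the class $(\overline{\zeta^2},\overline\zeta)$ lies in ${\rm Sel}^\phi$ iff additionally $\ell_1\equiv\ell_2\equiv 1\pmod 9$ (the conditions at the primes above the $\ell_i$), and for the $\pi$-generators the $\p$-condition becomes a non-trivial linear constraint on the exponents, automatic when $\pi_{\ell_i}\in\pm U^3_{K_\p}$ (equivalently $\ell_i\equiv 1\pmod 9$) but producing a genuine obstruction otherwise.

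Putting these ingredients together, when every listed hypothesis is satisfied the basis classes I have exhibited to be Selmer are independent and account for the claimed dimensions $5$ and $4$, with the missing generator $(\overline{(\pi'_{\ell_2})^2},\overline{\pi'_{\ell_2}})$ recovered via the norm relation. In the complementary cases the substantive task is to verify the upper bounds $3$ (resp.\ $2$). Following the template of Proposition \ref{sylvesterpq3}(2)--(3), I would enumerate the lines in the quotient $\big(A^*_{S_n}/A^{*3}_{S_n}\big)_{N=1}/\langle(\overline{n^2},\overline n)\rangle$ not already accounted for and, for each such line, exhibit a local obstruction: a prime above $\ell_1$ or $\ell_2$ at which a failing cubic residue detects non-triviality, or the prime $\p$ at which Lemma \ref{imgofdeltaatp} rules out the element owing to the mismatch of $\ell_i,n\pmod 9$ with the $\zeta$- and $\pi$-exponents.

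The main obstacle will be the bookkeeping in this final enumeration: the quotient has $(3^4-1)/2=40$ lines to check, and the case analysis ramifies on the pair $(\ell_1\bmod 9,\ell_2\bmod 9)$ (subject to the congruence constraint on $n=\ell_1\ell_2\pmod 9$) and on the two cubic residue symbols $\big(\frac{\pi_{\ell_1}}{\pi_{\ell_2}}\big)_3,\big(\frac{\pi'_{\ell_1}}{\pi_{\ell_2}}\big)_3\in\{1,\zeta,\zeta^2\}$. No tool beyond Lemmas \ref{imgofkummap} and \ref{imgofdeltaatp} and the identity $\big(\frac{\pi_{\ell_i}}{\pi'_{\ell_i}}\big)_3=1$ should be required; the challenge is purely organizational, to arrange the case analysis so that each surviving line is eliminated by a single clearly identified local obstruction, exactly as was done in Proposition \ref{sylvesterpq3}.
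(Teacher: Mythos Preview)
Your proposal is correct and follows essentially the same approach as the paper: set up the ambient group $\big(A_{S_n}^*/A_{S_n}^{*3}\big)_{N=1}$ with the same generators (the paper uses $(\overline{n}^2,\overline{n})$ in place of your $(\overline{(\pi'_{\ell_2})^2},\overline{\pi'_{\ell_2}})$, an immaterial change of basis), identify the easy Selmer classes when all cubic residue conditions hold, and then in the remaining cases enumerate the $40$ lines in the quotient by $\langle(\overline{n}^2,\overline{n})\rangle$ and eliminate each via a local obstruction at some $\q\in S_n\cup\{\p\}$. The paper likewise flags this enumeration as ``tedious'' and, exactly as you anticipate, organizes it by the pair $(\ell_1\bmod 9,\ell_2\bmod 9)$ and the values of $\big(\frac{\pi_{\ell_1}}{\pi_{\ell_2}}\big)_3$, $\big(\frac{\pi'_{\ell_1}}{\pi_{\ell_2}}\big)_3$, exhibiting in each subcase the explicit extra Selmer generators.
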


\proof We prove the result for \begin{small}$n=\ell_1\ell_2$.\end{small}
    Here \begin{small}$S_{n}=\{ \pi_{\ell_1}, \pi'_{\ell_1}, \pi_{\ell_2}, \pi'_{\ell_2} \}$\end{small} and the group  \begin{small}$\OO_{K,S_n}^* = \langle \pm\zeta, \pi_{\ell_1}, \pi'_{\ell_1}, \pi_{\ell_2}, \pi'_{\ell_2} \rangle = \langle \pm\zeta, \pi_{\ell_1}, \pi'_{\ell_1}, \pi_{\ell_2}, n \rangle$.\end{small} Hence, we have \begin{small} ${\rm Sel}^\phi(E_{16n^2}/K) \subset \Big(\frac{A_{S_{n}}^*}{A_{S_{n}}^{*3}}\Big)_{N=1} = \big\langle (\overline{\zeta^2},\overline{\zeta}), (\overline{\pi_{\ell_1}}^2,\overline{\pi_{\ell_1}}), (\overline{\pi'^{2}_{\ell_1}},\overline{\pi'_{\ell_1}}), (\overline{\pi_{\ell_2}}^2,\overline{\pi_{\ell_2}}),(\overline{n}^2, \overline{n}) \big\rangle $ \end{small} which implies by Lemma \ref{selmerbound} that \begin{small}$\dim_{\F_3} {\rm Sel}^\phi(E_{16n^2}/K) \le |S_n|+1= 5$.\end{small} 

    As before, we observe that the element \begin{small}$(\overline{n}^2, \overline{n}) \in {\rm Sel}^\phi(E_{16n^2}/K)$\end{small} in every case. In the cases where we claim \begin{small}$\dim_{\F_3}{\rm Sel}^\phi(E_{16n^2}/K)$\end{small} is $2$ or $3$, in order to determine  \begin{small}$\dim_{\F_3}{\rm Sel}^\phi(E_{16n^2}/K)$\end{small} precisely, one has to investigate the $40$ subgroups of order $3$ of  \begin{small}$\Big(\frac{A_{S_{n}}^*}{A_{S_{n}}^{*3}}\Big)_{N=1} \Big/ \big\langle (\overline{n}^2, \overline{n}) \big\rangle$\end{small} corresponding to the following elements: \begin{small}$\zeta$, $\pi_{\ell_1}$, $\pi'_{\ell_1}$, $\pi_{\ell_2}$, $\ell_1$, $\pi_{\ell_1}\pi_{\ell_2}$, $\pi'_{\ell_1}\pi_{\ell_2}$, $\ell_1\pi_{\ell_1}$, $\pi_{\ell_1}^2\pi_{\ell_2}$, $\pi'^{2}_{\ell_1}\pi_{\ell_2}$, $\ell_1\pi_{\ell_2}$, $\ell_1^2\pi_{\ell_2}$, $\ell_1\pi_{\ell_1}\pi_{\ell_2}$, $\ell_1\pi'_{\ell_1}\pi_{\ell_2}$, $\zeta\pi_{\ell_1}$, $\zeta\pi'_{\ell_1}$, $\zeta\pi_{\ell_2}$, $\zeta\ell_1$, $\zeta\pi_{\ell_1}\pi_{\ell_2}$, $\zeta\pi'_{\ell_1}\pi_{\ell_2}$, $\zeta\ell_1\pi_{\ell_1}$, $\zeta\pi_{\ell_1}^2\pi_{\ell_2}$, $\zeta\pi'^{2}_{\ell_1}\pi_{\ell_2}$, $\zeta\ell_1\pi_{\ell_2}$, $\zeta\ell_1^2\pi_{\ell_2}$, $\zeta\ell_1\pi_{\ell_1}\pi_{\ell_2}$, $\zeta\ell_1\pi'_{\ell_1}\pi_{\ell_2}$, $\zeta^2\pi_{\ell_1}$, $\zeta^2\pi'_{\ell_1}$, $\zeta^2\pi_{\ell_2}$, $\zeta^2\ell_1$, $\zeta^2\pi_{\ell_1}\pi_{\ell_2}$, $\zeta^2\pi'_{\ell_1}\pi_{\ell_2}$, $\zeta^2\ell_1\pi_{\ell_1}$, $\zeta^2\pi_{\ell_1}^2\pi_{\ell_2}$, $\zeta^2\pi'^{2}_{\ell_1}\pi_{\ell_2}$, $\zeta^2\ell_1\pi_{\ell_2}$, $\zeta^2\ell_1^2\pi_{\ell_2}$, $\zeta^2\ell_1\pi_{\ell_1}\pi_{\ell_2}$, $\zeta^2\ell_1\pi'_{\ell_1}\pi_{\ell_2}$. \end{small} \\
    
    A similar but much tedious computation as in Props. \ref{sylvesterpq}-\ref{sylvesterpq3} shows that
    \begin{enumerate}
        \item All of \begin{small}$(\overline{\zeta^2},\overline{\zeta})$, $ (\overline{\pi_{\ell_1}}^2,\overline{\pi_{\ell_1}})$, $ (\overline{\pi'^{2}_{\ell_1}},\overline{\pi'_{\ell_1}})$, $ (\overline{\pi_{\ell_2}}^2,\overline{\pi_{\ell_2}})$\end{small} belong to \begin{small}${\rm Sel}^\phi(E_{16n^2}/K)$,\end{small} when  \begin{small}$\ell_1 \equiv \ell_2 \equiv 1 \pmod 9$\end{small} and both \begin{small}$ \big(\frac{\pi_{\ell_1}}{\pi_{\ell_2}}\big)_3$ \end{small} and \begin{small}$\big(\frac{\pi'_{\ell_1}}{\pi_{\ell_2}}\big)_3 $\end{small} are $1$. Hence, \begin{small}$\dim_{\F_3} {\rm Sel}^\phi(E_{16n^2}/K) = 5$\end{small} here.\\
        
       \item Let \begin{small}$\ell_1 \equiv \ell_2 \equiv 1 \pmod 9$\end{small} such that at least one of \begin{small}$ \big(\frac{\pi_{\ell_1}}{\pi_{\ell_2}}\big)_3$\end{small} and \begin{small}$\big(\frac{\pi'_{\ell_1}}{\pi_{\ell_2}}\big)_3$\end{small} is not $1$. We see that apart from \begin{small}$(\overline{n}^2, \overline{n})$\end{small}, out of the $40$ subgroups stated above, only the subgroups generated by elements mentioned below are contained in the \begin{small}$\phi$-\end{small}Selmer group:
        \begin{enumerate}
            \item If \begin{small}$ \big(\frac{\pi_{\ell_1}}{\pi_{\ell_2}}\big)_3 = 1$ \end{small} and \begin{small}$\big(\frac{\pi'_{\ell_1}}{\pi_{\ell_2}}\big)_3 \neq 1$,\end{small} then  \begin{small}$(\overline{\zeta^2},\overline{\zeta})$\end{small},  \begin{small}$(\overline{\pi'^{2}_{\ell_1}\pi_{\ell_2}^2},\overline{\pi'_{\ell_1}\pi_{\ell_2}}) \in {\rm Sel}^\phi(E_{16n^2}/K)$.\end{small} 
            
            \item If \begin{small}$ \big(\frac{\pi_{\ell_1}}{\pi_{\ell_2}}\big)_3 = \big(\frac{\pi'_{\ell_1}}{\pi_{\ell_2}}\big)_3 = \zeta \text{ or } \zeta^2$,\end{small} then  \begin{small}$(\overline{\zeta^2},\overline{\zeta})$\end{small},  \begin{small}$(\overline{{\ell_1}\pi'_{\ell_1}},\overline{{\ell_1}\pi_{\ell_1}}) \in {\rm Sel}^\phi(E_{16n^2}/K)$.\end{small} 

            \item If \begin{small}$ \big(\frac{\pi_{\ell_1}}{\pi_{\ell_2}}\big)_3 = \zeta$\end{small} and  \begin{small}$\big(\frac{\pi'_{\ell_1}}{\pi_{\ell_2}}\big)_3 = \zeta^2$,\end{small} then  \begin{small}$(\overline{\zeta^2},\overline{\zeta})$\end{small},  \begin{small}$(\overline{{\ell_1}^2\pi_{\ell_2}},\overline{{\ell_1}\pi_{\ell_2}^2}) \in {\rm Sel}^\phi(E_{16n^2}/K)$.\end{small}
        \end{enumerate}
        \noindent Thus, \begin{small}$\dim_{\F_3} {\rm Sel}^\phi(E_{16n^2}/K) = 3$\end{small} in these cases.\\
        
       \item Next, let \begin{small}$\ell_1 \equiv 4 \pmod 9$\end{small} and  \begin{small}$\ell_2 \equiv 7 \pmod 9$.\end{small} Then the elements given below, along with \begin{small}$(\overline{n}^2, \overline{n})$\end{small}, generate \begin{small}${\rm Sel}^\phi(E_{16n^2}/K)$:\end{small}
        \begin{enumerate}
            \item If both \begin{small}$ \big(\frac{\pi_{\ell_1}}{\pi_{\ell_2}}\big)_3 = \big(\frac{\pi'_{\ell_1}}{\pi_{\ell_2}}\big)_3 = 1$,\end{small} then  \begin{small}$(\overline{\ell_1 \pi'_{\ell_1}},\overline{\ell_1\pi_{\ell_1}})$\end{small},  \begin{small}$(\overline{\pi'^{2}_{\ell_1}\pi_{\ell_2}^2},\overline{\pi'_{\ell_1}\pi_{\ell_2}}) \in {\rm Sel}^\phi(E_{16n^2}/K)$.\end{small} 
            
            \item If \begin{small}$\big(\frac{\pi_{\ell_1}}{\pi_{\ell_2}}\big)_3 = 1$\end{small} and \begin{small}$\big(\frac{\pi'_{\ell_1}}{\pi_{\ell_2}}\big)_3 = \zeta$,\end{small} then  \begin{small}$(\overline{\pi'^{2}_{\ell_1}\pi_{\ell_2}^2},\overline{\pi'_{\ell_1}\pi_{\ell_2}})$\end{small},  \begin{small}$(\overline{\zeta^2{\ell_1}\pi'_{\ell_1}},\overline{\zeta{\ell_1}\pi_{\ell_1}}) \in {\rm Sel}^\phi(E_{16n^2}/K)$.\end{small} 

            \item If \begin{small}$ \big(\frac{\pi_{\ell_1}}{\pi_{\ell_2}}\big)_3 = 1$\end{small} and \begin{small}$\big(\frac{\pi'_{\ell_1}}{\pi_{\ell_2}}\big)_3 = \zeta^2$,\end{small} then  \begin{small}$(\overline{\pi'^{2}_{\ell_1}\pi_{\ell_2}^2},\overline{\pi'_{\ell_1}\pi_{\ell_2}})$\end{small},  \begin{small}$(\overline{\zeta^2{\ell_1}\pi_{\ell_1}},\overline{\zeta{\ell_1}\pi'_{\ell_1}}) \in {\rm Sel}^\phi(E_{16n^2}/K)$.\end{small} 

            \item If both \begin{small}$ \big(\frac{\pi_{\ell_1}}{\pi_{\ell_2}}\big)_3 = \big(\frac{\pi'_{\ell_1}}{\pi_{\ell_2}}\big)_3 = \zeta$,\end{small} then    \begin{small}$(\overline{{\ell_1}\pi'_{\ell_1}},\overline{{\ell_1}\pi_{\ell_1}})$,\end{small} \begin{small}$(\overline{\zeta^2\pi'_{\ell_1}\pi_{\ell_2}},\overline{\zeta\pi'^{2}_{\ell_1}\pi_{\ell_2}^2}) \in {\rm Sel}^\phi(E_{16n^2}/K)$.\end{small}

            \item If \begin{small}$ \big(\frac{\pi_{\ell_1}}{\pi_{\ell_2}}\big)_3 = \zeta$\end{small} and  \begin{small}$\big(\frac{\pi'_{\ell_1}}{\pi_{\ell_2}}\big)_3 = \zeta^2$,\end{small} then    \begin{small}$(\overline{{\ell_1}^2\pi_{\ell_2}},\overline{{\ell_1}\pi_{\ell_2}^2}),$\end{small} \begin{small}$(\overline{\zeta^2{\ell_1}\pi'_{\ell_1}},\overline{\zeta{\ell_1}\pi_{\ell_1}}) \in {\rm Sel}^\phi(E_{16n^2}/K)$.\end{small}

            \item If both \begin{small}$ \big(\frac{\pi_{\ell_1}}{\pi_{\ell_2}}\big)_3 = \big(\frac{\pi'_{\ell_1}}{\pi_{\ell_2}}\big)_3 = \zeta^2$,\end{small} then  \begin{small}$(\overline{{\ell_1}\pi'_{\ell_1}},\overline{{\ell_1}\pi_{\ell_1}})$,\end{small} \begin{small}$(\overline{\zeta^2\pi'^{2}_{\ell_1}\pi_{\ell_2}^2},\overline{\zeta\pi'_{\ell_1}\pi_{\ell_2}}) \in {\rm Sel}^\phi(E_{16n^2}/K)$.\end{small} 
        \end{enumerate}
        \noindent Hence, \begin{small}$\dim_{\F_3} {\rm Sel}^\phi(E_{16n^2}/K) = 3$\end{small} in all of these cases.\\

         \item When \begin{small}$n=\ell_1\ell_2 \not\equiv 1 \pmod 9,$\end{small} then at least one of \begin{small}$\ell_1$\end{small} and \begin{small}$\ell_2$\end{small} is not \begin{small}$1 \pmod 9.$\end{small} We know that \begin{small}$\zeta_9 \notin K_{\pi_{\ell_i}}^*$\end{small} whenever \begin{small}$\ell_i \not\equiv \pm 1 \pmod 9.$\end{small} Hence, we get that \begin{small}$(\overline{\zeta^2},\overline{\zeta}) \notin \text{Im } \delta_{\phi,K_{\pi_{\ell_i}}}$\end{small} and so \begin{small}$(\overline{\zeta^2},\overline{\zeta}) \notin {\rm Sel}^\phi(E_{16n^2}/K)$.\end{small} We also obtain that all of \begin{small}$(\overline{\pi_{\ell_1}}^2,\overline{\pi_{\ell_1}})$, $ (\overline{\pi'^{2}_{\ell_1}},\overline{\pi'_{\ell_1}})$, $ (\overline{\pi_{\ell_2}}^2,\overline{\pi_{\ell_2}})$\end{small} belong to \begin{small}${\rm Sel}^\phi(E_{16n^2}/K)$,\end{small} if \begin{small}$n \not\equiv 1 \pmod 9$\end{small} and each of \begin{small}$ \big(\frac{\pi_{\ell_1}}{\pi_{\ell_2}}\big)_3$ \end{small} and \begin{small}$\big(\frac{\pi'_{\ell_1}}{\pi_{\ell_2}}\big)_3 $\end{small} is $1$. Hence, \begin{small}$\dim_{\F_3} {\rm Sel}^\phi(E_{16n^2}/K) = 4$\end{small} in this case.\\

        \item Finally, when \begin{small}$n \not\equiv 1 \pmod 9$\end{small} and at least one of \begin{small}$ \big(\frac{\pi_{\ell_1}}{\pi_{\ell_2}}\big)_3$\end{small} and \begin{small}$\big(\frac{\pi'_{\ell_1}}{\pi_{\ell_2}}\big)_3$\end{small} is not $1$, we obtain that \begin{small}${\rm Sel}^\phi(E_{16n^2}/K)$\end{small} is generated by \begin{small}$(\overline{n}^2, \overline{n})$\end{small} and the elements given below:
        \begin{enumerate}
            \item If \begin{small}$ \big(\frac{\pi_{\ell_1}}{\pi_{\ell_2}}\big)_3 = 1$ \end{small} and \begin{small}$\big(\frac{\pi'_{\ell_1}}{\pi_{\ell_2}}\big)_3 \neq 1$,\end{small} then  \begin{small}$(\overline{\pi'^{2}_{\ell_1}\pi_{\ell_2}^2},\overline{\pi'_{\ell_1}\pi_{\ell_2}}) \in {\rm Sel}^\phi(E_{16n^2}/K)$.\end{small} 
            
            \item If \begin{small}$ \big(\frac{\pi_{\ell_1}}{\pi_{\ell_2}}\big)_3 = \big(\frac{\pi'_{\ell_1}}{\pi_{\ell_2}}\big)_3 = \zeta \text{ or } \zeta^2$,\end{small} then \begin{small}$(\overline{{\ell_1}\pi'_{\ell_1}},\overline{{\ell_1}\pi_{\ell_1}}) \in {\rm Sel}^\phi(E_{16n^2}/K)$.\end{small} 

            \item If \begin{small}$ \big(\frac{\pi_{\ell_1}}{\pi_{\ell_2}}\big)_3 = \zeta$\end{small} and  \begin{small}$\big(\frac{\pi'_{\ell_1}}{\pi_{\ell_2}}\big)_3 = \zeta^2$,\end{small} then  \begin{small}$(\overline{{\ell_1}^2\pi_{\ell_2}},\overline{{\ell_1}\pi_{\ell_2}^2}) \in {\rm Sel}^\phi(E_{16n^2}/K)$.\end{small}
        \end{enumerate}
        \noindent Thus, \begin{small}$\dim_{\F_3} {\rm Sel}^\phi(E_{16n^2}/K) = 2$\end{small} in all of these cases.
        \qed
    \end{enumerate}

\section{Computation of the rank of the elliptic curve $E_{16n^2}/\Q$}\label{rankcomp}
We are now ready to obtain results on the Mordell-Weil rank of the curves \begin{small}$E_{-432n^2}$\end{small} over \begin{small}$\Q$.\end{small} 
We use Props. \ref{sylvesterpq}-\ref{sylvesterpq4} and Lemma \ref{rankbound} to obtain the results stated in Conjecture \ref{conjforrank2}. Recall that \begin{small}$\text{rk } E_{16n^2}(\Q)=\text{rk } E_{-432n^2}(\Q)$.\end{small}

\begin{theorem}\label{thmtosylvester} 
    Let \begin{small}$n$\end{small} be a cube-free integer co-prime to $3$ having  exactly two distinct prime factors \begin{small}$\ell_1$\end{small} and \begin{small}$\ell_2$.\end{small} We have \begin{small}$\text{rk } E_{-432n^2}(\Q)=0$\end{small} i.e. \begin{small}$n$\end{small} is not a rational cube sum in the following cases: 
    \begin{enumerate}
        \item \begin{small}$n=\ell_1\ell_2$\end{small} or \begin{small}$n=\ell_1^2\ell_2^2$\end{small} with \begin{small}$\ell_1 \equiv 2 \pmod 9$\end{small} and \begin{small}$\ell_2 \equiv 5 \pmod 9$,\end{small}

        \item \begin{small}$n=\ell_1^2\ell_2$\end{small} with \begin{small}$\ell_1 \equiv \ell_2 \equiv 2,5 \pmod 9$\end{small},
        
        \item \begin{small}$n=\ell_1\ell_2$\end{small} or \begin{small}$n=\ell_1^2\ell_2$\end{small} with  \begin{small}$n \equiv 2,5 \pmod 9$\end{small} where \begin{small}$\ell_1 \equiv 1 \pmod 3,$ $ \ell_2 \equiv 2 \pmod 3$\end{small} and \begin{small}$\ell_2$\end{small} is not a cube in \begin{small}$\F_{\ell_1}$\end{small},

        \item \begin{small}$n=\ell_1^2\ell_2^2$\end{small} or \begin{small}$n=\ell_1\ell_2^2$\end{small} with  \begin{small}$n \equiv 4,7 \pmod 9$\end{small} where \begin{small}$\ell_1 \equiv 1 \pmod 3,$ $ \ell_2 \equiv 2 \pmod 3$\end{small} and \begin{small}$\ell_2$\end{small} is not a cube in \begin{small}$\F_{\ell_1}$\end{small}. 
       \end{enumerate} 
\end{theorem}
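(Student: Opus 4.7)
The plan is to package the explicit Selmer-group computations of Propositions \ref{sylvesterpq}, \ref{sylvesterp2q}, and \ref{sylvesterpq2} with the unconditional half of Lemma \ref{rankbound}. Since the rational $3$-isogeny $\phi_n$ gives $\text{rk } E_{-432n^2}(\Q) = \text{rk } E_{16n^2}(\Q)$, and since Lemma \ref{rankbound} yields $\text{rk } E_{16n^2}(\Q) \le \dim_{\F_3} {\rm Sel}^\phi(E_{16n^2}/K) - 1$ unconditionally, it suffices to verify that $\dim_{\F_3} {\rm Sel}^\phi(E_{16n^2}/K) = 1$ in each of the four listed cases. The Mordell--Weil rank is non-negative, so the bound forces it to be $0$, and non-vanishing of the rank is equivalent to $n$ being a sum of two rational cubes.

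For case (1), the hypothesis $\ell_1 \equiv 2,\ \ell_2 \equiv 5 \pmod 9$ places us in the first branch of Proposition \ref{sylvesterpq} (the $k_1 = 0$ regime, with $\ell_1 \equiv \ell_2 \equiv 2 \pmod 3$), giving $\dim_{\F_3} {\rm Sel}^\phi = 1$ for $n = \ell_1\ell_2$ and $n = \ell_1^2\ell_2^2$. Case (2) is treated identically using the first branch of Proposition \ref{sylvesterp2q} for $n = \ell_1^2\ell_2$ with $\ell_1 \equiv \ell_2 \equiv 2$ or $5 \pmod 9$.

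For cases (3) and (4), I would first note that ``$\ell_2$ is not a cube in $\F_{\ell_1}$'' is exactly $\bigl(\tfrac{\ell_2}{\pi_{\ell_1}}\bigr)_3 \neq 1$: since $\ell_1$ splits as $\pi_{\ell_1}\pi'_{\ell_1}$ and $\OO_K/\pi_{\ell_1} \cong \F_{\ell_1}$, cubic residuacity of $\ell_2$ in $\F_{\ell_1}$ is detected by the cubic residue symbol at $\pi_{\ell_1}$. The congruences $n \equiv 2, 5 \pmod 9$ (case (3)) and $n \equiv 4, 7 \pmod 9$ (case (4)) both ensure $n \not\equiv \pm 1 \pmod 9$, so the hypotheses of Proposition \ref{sylvesterpq2} are met for all four exponent patterns $n \in \{\ell_1\ell_2,\ \ell_1^2\ell_2,\ \ell_1\ell_2^2,\ \ell_1^2\ell_2^2\}$. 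The second branch of that proposition then delivers $\dim_{\F_3} {\rm Sel}^\phi(E_{16n^2}/K) = 1$.

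Because all the substantive work has already been done in the Selmer-group computations of \S\ref{sec2}, there is no genuine obstacle here; the theorem is a uniform corollary. The only care needed is bookkeeping: matching each case of the theorem to the precise branch of the correct proposition, and verifying the cubic-residue translation. Notably, the result is unconditional because Lemma \ref{rankbound}'s inequality $\text{rk } E_{16n^2}(\Q) \le t - 1$ does \emph{not} require the evenness hypothesis on $\dim_{\F_3} \Sh(E_{16n^2}/\Q)[3]$; that hypothesis is only needed for the parity statement, which plays no role when $t = 1$ since then the only non-negative integer bounded above by $0$ is $0$ itself.
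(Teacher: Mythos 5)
Your proposal is correct and follows essentially the same route as the paper: the paper's proof likewise reads off $\dim_{\F_3}{\rm Sel}^\phi(E_{16n^2}/K)=1$ in each of the four cases from Propositions \ref{sylvesterpq}, \ref{sylvesterp2q} and \ref{sylvesterpq2}, and then applies the unconditional inequality of Lemma \ref{rankbound} together with $\text{rk } E_{-432n^2}(\Q)=\text{rk } E_{16n^2}(\Q)$. Your extra bookkeeping (the translation of ``$\ell_2$ not a cube in $\F_{\ell_1}$'' into $\big(\tfrac{\ell_2}{\pi_{\ell_1}}\big)_3\neq 1$ and the observation that no \Sh-parity hypothesis is needed) is exactly what the paper intends.
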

\begin{proof}
From Props. \ref{sylvesterpq}-\ref{sylvesterpq3}, we have \begin{small}$\dim_{\F_3} {\rm Sel}^\phi(E_{16n^2}/K) =1$\end{small} if \begin{small}$n$\end{small} satisfies any of the conditions (1) to (4) above. Then Lemma \ref{rankbound} implies \begin{small}$\text{rk } E_{16n^2}(\Q)=\text{rk } E_{-432n^2}(\Q)=0$.\end{small}
\end{proof}

Here on-wards, we assume that \begin{small}$\dim_{\F_3} \Sh(E_{16n^2}/\Q)[3]$\end{small} is even. The following results are then immediate consequences of Props. \ref{sylvesterpq}-\ref{sylvesterpq4} and Lemma \ref{rankbound}:
\begin{theorem}\label{thmtosylvesterrk1}
   Let \begin{small}$n$\end{small} be a cube-free integer co-prime to $3$ having  exactly two distinct prime factors \begin{small}$\ell_1$\end{small} and \begin{small}$\ell_2$.\end{small} 
   Then we have \begin{small}$\text{rk } E_{-432n^2}(\Q) \le 1$\end{small} in  the following cases. 
  \begin{enumerate}
        \item \begin{small}$n \not\equiv \pm 1 \pmod 9$, $\ell_1 \equiv \ell_2 \equiv 2 \pmod 3$,\end{small}
        \item \begin{small}$n \equiv \pm 1 \pmod 9$, $\ell_1 \equiv 1 \pmod 9$, $\ell_2 \equiv 8 \pmod 9$\end{small} but \begin{small}$\ell_2$\end{small} is not a cube in \begin{small}$\F_{\ell_1}$,\end{small}
        \item \begin{small}$n \equiv \pm 1 \pmod 9$, $\ell_2 \equiv 2 \pmod 3$ , $\ell_1 \equiv 4 \text{ or } 7 \pmod 9$,\end{small}
        \item \begin{small}$n \not\equiv \pm 1 \pmod 9$, $\ell_1 \equiv \ell_2 \equiv 1 \pmod 3$\end{small} and at least one among \begin{small}$\pi_{\ell_1}, \pi_{\ell_1}'$\end{small} is not a cube in \begin{small}$\F_{\ell_2}$.\end{small}
\end{enumerate}
Further, if we assume that \begin{small}$\dim_{\F_3} \Sh(E_{16n^2}/\Q)[3]$\end{small} is even in the cases above, then \begin{small}$\text{rk } E_{-432n^2}(\Q)=1$\end{small} i.e. \begin{small}$n$\end{small} is a sum of two rational cubes.  
\end{theorem}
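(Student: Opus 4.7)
The strategy is a direct application of Lemma \ref{rankbound}, which says that if $t := \dim_{\F_3} {\rm Sel}^\phi(E_{16n^2}/K)$, then $\text{rk } E_{16n^2}(\Q) \le t - 1$ unconditionally, and under the parity hypothesis on $\dim_{\F_3} \Sh(E_{16n^2}/\Q)[3]$ we have $\text{rk } E_{16n^2}(\Q) \equiv t - 1 \pmod 2$. Since the rational $3$-isogeny $\phi_n$ gives $\text{rk } E_{-432n^2}(\Q) = \text{rk } E_{16n^2}(\Q)$, it therefore suffices to prove that $t = 2$ in each of the four cases; this yields $\text{rk } E_{-432n^2}(\Q) \le 1$ unconditionally, and combining the parity congruence with non-negativity of the rank forces equality with $1$, which is equivalent to $n$ being a sum of two rational cubes.

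To verify $t = 2$, I would match each case of the theorem to the corresponding statement in Section \ref{sec2}. In case (1), where both $\ell_1, \ell_2 \equiv 2 \pmod 3$, the three shapes $n = \ell_1\ell_2, \ell_1^2\ell_2, \ell_1^2\ell_2^2$ are covered by Propositions \ref{sylvesterpq} and \ref{sylvesterp2q}; a short enumeration of the residues $\ell_i \in \{2,5,8\} \pmod 9$ shows that every atypical sub-case there (dim $= 1$ or $3$) forces $n \equiv \pm 1 \pmod 9$ and is therefore excluded by our hypothesis, leaving dim $= 2$. In cases (2) and (3), where $\ell_1 \equiv 1 \pmod 3$, $\ell_2 \equiv 2 \pmod 3$ and $n \equiv \pm 1 \pmod 9$, Proposition \ref{sylvesterpq3} applies: the single dim $= 4$ exception requires simultaneously $\ell_1 \equiv 1 \pmod 9$, $\ell_2 \equiv 8 \pmod 9$, and $\ell_2$ a cube in $\F_{\ell_1}$, and the hypotheses of (2) and (3) are precisely arranged to avoid this configuration --- case (2) by the non-cubicity of $\ell_2$ modulo $\ell_1$, case (3) by $\ell_1 \equiv 4$ or $7 \pmod 9$. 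In case (4), Proposition \ref{sylvesterpq4}(2) applies directly: with both primes $\equiv 1 \pmod 3$, the residues of $n \pmod 9$ all lie in $\{1,4,7\}$, so $n \not\equiv \pm 1 \pmod 9$ forces $n \equiv 4$ or $7 \pmod 9$, and the hypothesis that at least one of $\pi_{\ell_1}, \pi'_{\ell_1}$ is not a cube in $\F_{\ell_2}$ translates exactly to $\big(\frac{\pi_{\ell_1}}{\pi_{\ell_2}}\big)_3 \ne 1$ or $\big(\frac{\pi'_{\ell_1}}{\pi_{\ell_2}}\big)_3 \ne 1$, placing us in the dim $= 2$ alternative.

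The main obstacle is not analytic difficulty but careful bookkeeping of congruences: in each of the four cases one must enumerate the possible residues of $\ell_1, \ell_2 \pmod 9$ compatible with the hypothesis, deduce the associated residue $n \pmod 9$, and verify that every resulting configuration lands in the dim $= 2$ sub-case of the relevant proposition rather than in a sporadic dim $= 1, 3$, or $4$ sub-case. In particular, in case (1) one has to track the three different shapes of $n$ separately through Propositions \ref{sylvesterpq} and \ref{sylvesterp2q}, and in cases (2)--(4) one has to check that the cubic-residue hypotheses are precisely the complement of the one that would have triggered the dim-boost. Once this routine enumeration is complete, the conclusion of the theorem follows mechanically from Lemma \ref{rankbound} together with the assumption that $\dim_{\F_3} \Sh(E_{16n^2}/\Q)[3]$ is even.
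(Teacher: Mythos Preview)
Your proposal is correct and follows exactly the paper's approach: the paper's proof simply cites that $\dim_{\F_3}{\rm Sel}^\phi(E_{16n^2}/K)=2$ in each case from Propositions~\ref{sylvesterpq}--\ref{sylvesterpq4} and then invokes Lemma~\ref{rankbound}. You have merely spelled out in greater detail the congruence bookkeeping that the paper leaves implicit, and your enumeration is accurate.
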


\begin{proof}
We know that \begin{small}$\dim_{\F_3} {\rm Sel}^\phi(E_{16n^2}/K) =2$\end{small} if \begin{small}$n$\end{small} satisfies any one of the conditions (1) to (4) stated above, from Props. \ref{sylvesterpq}-\ref{sylvesterpq4}.  The statement follows from Lemma \ref{rankbound}. 
\end{proof}

Theorems \ref{thmtosylvester} and \ref{thmtosylvesterrk1} can prove to be more efficient than SageMath \cite{sg}, as in many of the situations mentioned above \cite{sg} can only provide bounds on the rank of the elliptic curve \begin{small}$E_{-432n^2}$,\end{small} whereas these theorems allow us to find \begin{small}$\text{rk } E_{-432n^2}(\Q)$\end{small} precisely. We establish this claim in the examples below: 

\noindent {\bf Example (1)} Let \begin{small}$\ell_1=2$,\end{small} \begin{small}$\ell_2=131$\end{small} and  \begin{small}$n=\ell_1\ell_2$\end{small} or \begin{small}$n=\ell_1^2\ell_2^2$\end{small}. Then by Theorem \ref{thmtosylvester}(1), we get that \begin{small}$\text{rk } E_{-432n^2}(\Q)=0$.\end{small} Whereas using \cite{sg}, one can only obtain \begin{small}$0 \le \text{rk } E_{-432n^2}(\Q) \le 2$\end{small}. 

\noindent {\bf Example (2)} Let \begin{small}$\ell_1=11$\end{small} and \begin{small}$\ell_2=29$.\end{small} Then for \begin{small}$n=\ell_1^2\ell_2$\end{small}, by Theorem \ref{thmtosylvester}(2), we get that \begin{small}$\text{rk } E_{-432n^2}(\Q)=0$.\end{small} Whereas, \cite{sg} only predicts that \begin{small}$0 \le \text{rk } E_{-432n^2}(\Q) \le 2$\end{small}. 

\noindent {\bf Example (3)} Let \begin{small}$\ell_1=5$\end{small} and \begin{small}$\ell_2=41$.\end{small} Then for \begin{small}$n=\ell_1^2\ell_2$\end{small}, by Theorem \ref{thmtosylvester}(2), we get that \begin{small}$\text{rk } E_{-432n^2}(\Q)=0$.\end{small} Whereas, \cite{sg} only predicts that \begin{small}$0 \le \text{rk } E_{-432n^2}(\Q) \le 2$\end{small}.

\noindent {\bf Example (4)} Let \begin{small}$\ell_1=19$\end{small} and \begin{small}$\ell_2=317$.\end{small} Then it is easy to see that \begin{small}$\ell_2$\end{small} is not a cube in \begin{small}$\F_{\ell_1}$.\end{small} So for \begin{small}$n=\ell_1\ell_2$\end{small} or \begin{small}$n=\ell_1^2\ell_2$\end{small}, by Theorem \ref{thmtosylvester}(3), we get that \begin{small}$\text{rk } E_{-432n^2}(\Q)=0$.\end{small} On the other hand, \cite{sg} only provides the bounds \begin{small}$0 \le \text{rk } E_{-432n^2}(\Q) \le 2$\end{small}. 

\noindent {\bf Example (5)} Let \begin{small}$\ell_1=37$\end{small} and \begin{small}$\ell_2=131$.\end{small} Then one can see that \begin{small}$\ell_2$\end{small} is not a cube in \begin{small}$\F_{\ell_1}$.\end{small} Hence for \begin{small}$n=\ell_1\ell_2$\end{small} or \begin{small}$n=\ell_1^2\ell_2$\end{small}, by Theorem \ref{thmtosylvester}(3), we get that \begin{small}$\text{rk } E_{-432n^2}(\Q)=0$.\end{small} On the other hand, \cite{sg} only provides the bounds \begin{small}$0 \le \text{rk } E_{-432n^2}(\Q) \le 2$\end{small}.

\noindent {\bf Example (6)} Let \begin{small}$\ell_1=97$\end{small} and \begin{small}$\ell_2=17$.\end{small} Then it can be easily seen that \begin{small}$\ell_2$\end{small} is not a cube in \begin{small}$\F_{\ell_1}$.\end{small} Hence for \begin{small}$n=\ell_1\ell_2^2$\end{small} or \begin{small}$n=\ell_1^2\ell_2^2$\end{small}, by Theorem \ref{thmtosylvester}(4), we get that \begin{small}$\text{rk } E_{-432n^2}(\Q)=0$.\end{small} On the other hand, \cite{sg} only provides the bounds \begin{small}$0 \le \text{rk } E_{-432n^2}(\Q) \le 2$\end{small}.

\noindent {\bf Example (7)} Let \begin{small}$\ell_1=281$\end{small} and \begin{small}$\ell_2=89$.\end{small} Then for \begin{small}$n \in \{\ell_1\ell_2, \ell_1^2\ell_2\}$,\end{small} we get \begin{small}$\text{rk } E_{-432n^2}(\Q) \le 1$\end{small} by Theorem \ref{thmtosylvesterrk1}(1). 
On the other hand, we obtain \begin{small}$1 \le \text{rk } E_{-432n^2}(\Q) \le 3$\end{small} using \cite{sg}, when \begin{small}$n \in \{\ell_1\ell_2, \ell_1^2\ell_2\}$.\end{small} Hence, we deduce that \begin{small}$\text{rk } E_{-432n^2}(\Q) = 1$.\end{small} 

\noindent {\bf Example (8)} Let \begin{small}$\ell_1=73$\end{small} and \begin{small}$\ell_2=269$.\end{small} We obtain \begin{small}$1 \le \text{rk } E_{-432\ell_1^4\ell_2^2}(\Q) \le 3$\end{small} using \cite{sg}. One can see that \begin{small}$\ell_2$\end{small} is not a cube in \begin{small}$\F_{\ell_1}$\end{small}, so by Theorem \ref{thmtosylvesterrk1}(2) we have \begin{small}$\text{rk } E_{-432\ell_1^4\ell_2^2}(\Q) \le 1$\end{small}  and hence we conclude that \begin{small}$\text{rk } E_{-432\ell_1^4\ell_2^2}(\Q) = 1$\end{small}.

\noindent {\bf Example (9)} Let \begin{small}$\ell_1=139$\end{small} and \begin{small}$\ell_2=389$.\end{small} Then for \begin{small}$n \in \{\ell_1\ell_2, \ell_1^2\ell_2^2\}$,\end{small} we get \begin{small}$\text{rk } E_{-432n^2}(\Q) \le 1$\end{small} by Theorem \ref{thmtosylvesterrk1}(3). 
On the other hand, we get \begin{small}$1 \le \text{rk } E_{-432n^2}(\Q) \le 3$\end{small} using \cite{sg}, when \begin{small}$n \in \{\ell_1\ell_2, \ell_1^2\ell_2^2\}$.\end{small} Thus, \begin{small}$\text{rk } E_{-432n^2}(\Q) = 1$.\end{small}

\noindent {\bf Example (10)} Let \begin{small}$\ell_1=157$\end{small} and \begin{small}$\ell_2=19$.\end{small} Let \begin{small}$n \in \{\ell_1^2\ell_2, \ell_1\ell_2^2\}$.\end{small}  One can see that \begin{small}$\ell_1$\end{small} is not a cube in \begin{small}$\F_{\ell_2}$\end{small}, which implies at least one of \begin{small}$\pi_{\ell_1}, \pi'_{\ell_1}$\end{small} is not a cube in \begin{small}$\F_{\ell_2}$\end{small}. Thus by Theorem \ref{thmtosylvesterrk1}(4), we have \begin{small}$\text{rk } E_{-432n^2}(\Q) \le 1$\end{small}. We obtain \begin{small}$1 \le \text{rk } E_{-432n^2}(\Q) \le 3$\end{small} using \cite{sg} and hence, we conclude that \begin{small}$\text{rk } E_{-432n^2}(\Q) = 1$\end{small}.

\begin{theorem}\label{thmtosylvesterrk3}
   Let \begin{small}$n$\end{small} be a cube-free integer co-prime to $3$ having  exactly two distinct prime factors \begin{small}$\ell_1$\end{small} and \begin{small}$\ell_2$.\end{small} In the following cases, we have \begin{small}$\text{rk } E_{-432n^2}(\Q) \le 3$.\end{small}
    \begin{enumerate}
        \item \begin{small}$n \equiv \pm 1 \pmod 9$, $\ell_1 \equiv 1 \pmod 9$, $\ell_2 \equiv 8 \pmod 3$\end{small} and \begin{small}$\ell_2$\end{small} is a cube in \begin{small}$\F_{\ell_1}$,\end{small}
        \item \begin{small}$n \not\equiv \pm 1 \pmod 9$, $\ell_1 \equiv \ell_2 \equiv 1 \pmod 3$\end{small} and both \begin{small}$\pi_{\ell_1}, \pi_{\ell_1}'$\end{small} are cubes in \begin{small}$\F_{\ell_2}$.\end{small}
\end{enumerate}
Further, if we assume that \begin{small}$\dim_{\F_3} \Sh(E_{16n^2}/\Q)[3]$\end{small} is even in the above cases, then we have \begin{small}$\text{rk } E_{-432n^2}(\Q)=1 \text{ or } 3$\end{small} i.e. \begin{small}$n$\end{small} is a rational cube sum.
\end{theorem}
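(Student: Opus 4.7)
The plan is to mirror exactly the proofs of Theorems \ref{thmtosylvester} and \ref{thmtosylvesterrk1}: assemble the explicit Selmer-dimension computations of \S\ref{sec2} with Lemma \ref{rankbound}. All the heavy lifting has already been done in Propositions \ref{sylvesterpq3} and \ref{sylvesterpq4}; the only task is to identify the right sub-case in each of the two hypotheses and then read off the rank bound.

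First I would check that in case (1) the hypothesis places us in the first branch of Proposition \ref{sylvesterpq3}. Indeed, the cubic residue symbol $\bigl(\tfrac{\ell_2}{\pi_{\ell_1}}\bigr)_3=1$ is exactly the condition that $\ell_2$ is a cube in $\F_{\ell_1}=\OO_K/\pi_{\ell_1}$, so Proposition \ref{sylvesterpq3} gives $\dim_{\F_3}\mathrm{Sel}^\phi(E_{16n^2}/K)=4$ for every admissible shape of $n$. Similarly, in case (2) the condition that both $\pi_{\ell_1}$ and $\pi'_{\ell_1}$ are cubes in $\F_{\ell_2}$ translates into $\bigl(\tfrac{\pi_{\ell_1}}{\pi_{\ell_2}}\bigr)_3=\bigl(\tfrac{\pi'_{\ell_1}}{\pi_{\ell_2}}\bigr)_3=1$, and since $n\not\equiv\pm1\pmod 9$ we land in part~(2), top branch, of Proposition \ref{sylvesterpq4}, again yielding $\dim_{\F_3}\mathrm{Sel}^\phi(E_{16n^2}/K)=4$. (The case $n=\ell_1\ell_2^2$ of \S\ref{sec2} not explicitly stated in Proposition \ref{sylvesterpq4} is covered by interchanging the roles of $\ell_1$ and $\ell_2$, which is harmless here since both are $\equiv 1\pmod 3$.)

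Once $t:=\dim_{\F_3}\mathrm{Sel}^\phi(E_{16n^2}/K)=4$ has been established in every case of the theorem, I apply Lemma \ref{rankbound}. Unconditionally it gives
$\text{rk }E_{16n^2}(\Q)\le t-1=3$, which, using the isogeny $\phi_n$ and the equality $\text{rk }E_{16n^2}(\Q)=\text{rk }E_{-432n^2}(\Q)$ recalled in the introduction, yields the first assertion. Under the extra hypothesis that $\dim_{\F_3}\Sh(E_{16n^2}/\Q)[3]$ is even, the same Lemma provides the parity $\text{rk }E_{16n^2}(\Q)\equiv t-1\equiv 1\pmod 2$, so the rank is odd and at most $3$, forcing it to lie in $\{1,3\}$. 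In particular it is positive, so $n$ is a sum of two rational cubes, as claimed.

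There is really no substantive obstacle: the hard part, namely the case-by-case computation of $\dim_{\F_3}\mathrm{Sel}^\phi(E_{16n^2}/K)$, is already packaged in \S\ref{sec2}, and Lemma \ref{rankbound} converts Selmer dimensions to rank bounds with the correct parity. The only thing to be careful about is the dictionary between "$\ell_2$ is a cube in $\F_{\ell_1}$" and the cubic residue symbols appearing in Propositions \ref{sylvesterpq3}--\ref{sylvesterpq4}, but this identification was already set up at the start of \S2.2.
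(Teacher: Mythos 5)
Your proposal is correct and follows essentially the same route as the paper: the paper's proof likewise reads off $\dim_{\F_3}{\rm Sel}^\phi(E_{16n^2}/K)=4$ from Propositions \ref{sylvesterpq3} and \ref{sylvesterpq4} in the two stated cases and then invokes Lemma \ref{rankbound} for the bound $\le 3$ and, under the evenness of $\dim_{\F_3}\Sh(E_{16n^2}/\Q)[3]$, the odd parity forcing the rank into $\{1,3\}$. Your extra remarks (the dictionary between cubic residue symbols and cubes in $\F_{\ell_1}$, $\F_{\ell_2}$, and the $\ell_1\ell_2^2$ case by symmetry) are consistent with the conventions already set up in \S\ref{sec2}.
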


\begin{proof}
If \begin{small}$n$\end{small} satisfies any one of the conditions (1) or (2) written above, then we know that \begin{small}$\dim_{\F_3} {\rm Sel}^\phi(E_{16n^2}/K) =4$\end{small} from Props. \ref{sylvesterpq3}-\ref{sylvesterpq4}.  The statement follows from Lemma \ref{rankbound}. 
\end{proof}

\noindent {\bf Example (1)} Let \begin{small}$\ell_1=19$\end{small} and \begin{small}$\ell_2=467$,\end{small} then we see that \begin{small}$\ell_2$\end{small} is a cube in \begin{small}$\F_{\ell_1}$\end{small}. For \begin{small}$n \in \{\ell_1\ell_2, \ell_1^2\ell_2, \ell_1\ell_2^2, \ell_1^2\ell_2^2\}$,\end{small} we have \begin{small}$n \equiv \pm 1 \pmod 9$.\end{small}
So, by Theorem \ref{thmtosylvesterrk3}(1), it then follows that  \begin{small}$\text{rk } E_{-432n^2}(\Q) \le 3$\end{small}. Further, if we assume that \begin{small}$\dim_{\F_3} \Sh(E_{16n^2}/\Q)[3]$\end{small} is even, then we have \begin{small}$\text{rk } E_{-432n^2}(\Q)=1 \text{ or } 3$\end{small}.

 Using \cite{sg}, we obtain  \begin{small}$\text{rk } E_{-432\ell_1^4\ell_2^2}(\Q)=3$ \end{small} and \begin{small}$\text{rk } E_{-432n^2}(\Q)=1$\end{small} for \begin{small}$n \in \{\ell_1\ell_2, \ell_1\ell_2^2, \ell_1^2\ell_2^2\}$.\end{small}

\noindent {\bf Example (2)} Let \begin{small}$\ell_1=103$\end{small} and \begin{small}$\ell_2=13$,\end{small} then  \begin{small}$\ell_1\ell_2 \equiv 7 \pmod 9$\end{small} and \begin{small}$\ell_1^2\ell_2^2 \equiv 4 \pmod 9$\end{small}. Notice that \begin{small}$\ell_1=103$\end{small} is a cube in \begin{small}$\F_{\ell_2}$\end{small}. Next observe that there exists an \begin{small}$\alpha \in \Z_{\ell_2}$\end{small} such that \begin{small}$\alpha \equiv 6 \pmod{13}$\end{small}. Writing \begin{small}$\pi_{\ell_1} = \frac{1}{2}(13+ 9 \alpha)$\end{small}, we see that \begin{small}$\pi_{\ell_1} \equiv 1 \pmod{13}$\end{small}. As a consequence, we see that both \begin{small}$\pi_{\ell_1}, \pi_{\ell_1}'$\end{small} are cubes in \begin{small}$\F_{\ell_2}$\end{small}. From Theorem \ref{thmtosylvesterrk3}(2), it then follows that for \begin{small}$n= \ell_1\ell_2,$\end{small} or \begin{small}$n= \ell_1^2\ell_2^2$\end{small},  we have \begin{small}$\text{rk } E_{-432n^2}(\Q) \le 3$\end{small}. Further, if we assume that \begin{small}$\dim_{\F_3} \Sh(E_{16n^2}/\Q)[3]$\end{small} is even, then we have \begin{small}$\text{rk } E_{-432n^2}(\Q)=1 \text{ or } 3$\end{small}.

 Using \cite{sg}, we obtain  \begin{small}$\text{rk } E_{-432\ell_1^2\ell_2^2}(\Q)=3$ \end{small} and \begin{small}$\text{rk } E_{-432\ell_1^4\ell_2^4}(\Q)=1$.\end{small}

\begin{theorem}\label{thmtosylvesterrk2}
   Let \begin{small}$n$\end{small} be a cube-free integer co-prime to $3$ having  exactly two distinct prime factors \begin{small}$\ell_1$\end{small} and \begin{small}$\ell_2$.\end{small} In the following cases we have \begin{small}$\text{rk } E_{-432n^2}(\Q) \le 2$.\end{small} 
    \begin{enumerate}
        \item \begin{small}$n \equiv \pm 1 \pmod 9$, $\ell_1 \equiv \ell_2 \equiv 8 \pmod 9$,\end{small}
        \item \begin{small}$n \not\equiv \pm 1 \pmod 9$, $\ell_1 \equiv 1 \pmod 3$, $\ell_2 \equiv 2 \pmod 3$\end{small} and \begin{small}$\ell_2$\end{small} is a cube in \begin{small}$\F_{\ell_1}$,\end{small}
        \item \begin{small}$n \equiv  1 \pmod 9$, $\ell_1 \equiv \ell_2  \equiv 1 \pmod 3$\end{small} and \begin{small}$\ell_1 \equiv  4 \text{ or } 7 \pmod 9$,\end{small}
        \item \begin{small}$n \equiv  1 \pmod 9$, $\ell_1 \equiv \ell_2  \equiv 1 \pmod 9$\end{small} and at least one of \begin{small}$\pi_{\ell_1}, \pi_{\ell_1}'$\end{small} is not a cube in \begin{small}$\F_{\ell_2}$.\end{small}
\end{enumerate}
   Moreover, under the assumption that \begin{small}$\dim_{\F_3} \Sh(E_{16n^2}/\Q)[3]$\end{small} is even in the cases above, we have \begin{small}$\text{rk } E_{-432n^2}(\Q)$\end{small} is either $0$ or $2$.
\end{theorem}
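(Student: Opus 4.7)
The plan is to reproduce, almost verbatim, the template of the proofs of Theorems \ref{thmtosylvester}, \ref{thmtosylvesterrk1}, and \ref{thmtosylvesterrk3}: locate each of the four listed cases within the explicit Selmer group computations of Propositions \ref{sylvesterpq}--\ref{sylvesterpq4}, observe that in each case the computation yields $\dim_{\F_3} {\rm Sel}^\phi(E_{16n^2}/K) = 3$, and then apply Lemma \ref{rankbound} together with the identity $\text{rk } E_{-432n^2}(\Q) = \text{rk } E_{16n^2}(\Q)$ coming from the rational $3$-isogeny $\phi_n$.

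More specifically, case (1) is the $k_1 = 0$, $\ell_1 \equiv \ell_2 \equiv 8 \pmod 9$ branch of Propositions \ref{sylvesterpq} and \ref{sylvesterp2q}; note this congruence automatically forces $n \equiv \pm 1 \pmod 9$ for each of the three admissible shapes of $n$, so the hypothesis ``$n \equiv \pm 1 \pmod 9$'' stated in the theorem is consistent with (in fact implied by) the prime congruence condition. Case (2) is the $k_1 = 1$, $n \not\equiv \pm 1 \pmod 9$, $\bigl(\tfrac{\ell_2}{\pi_{\ell_1}}\bigr)_3 = 1$ branch of Proposition \ref{sylvesterpq2}; the hypothesis ``$\ell_2$ is a cube in $\F_{\ell_1}$'' is precisely this cubic residue condition. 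Cases (3) and (4) fall inside the ``otherwise'' branch of Proposition \ref{sylvesterpq4}(1): case (3) escapes the $\dim = 5$ scenario because $\ell_1 \equiv 4$ or $7 \pmod 9$ fails $\ell_1 \equiv 1 \pmod 9$, while case (4) escapes it because one of $\bigl(\tfrac{\pi_{\ell_1}}{\pi_{\ell_2}}\bigr)_3$ or $\bigl(\tfrac{\pi'_{\ell_1}}{\pi_{\ell_2}}\bigr)_3$ is nontrivial by the cubic residue hypothesis. In all four situations one reads off $\dim_{\F_3} {\rm Sel}^\phi(E_{16n^2}/K) = 3$.

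With $t = 3$ in hand, the first assertion of Lemma \ref{rankbound} gives $\text{rk } E_{16n^2}(\Q) \le t - 1 = 2$ unconditionally, which proves the bound $\text{rk } E_{-432n^2}(\Q) \le 2$. Assuming further that $\dim_{\F_3} \Sh(E_{16n^2}/\Q)[3]$ is even, the second assertion of Lemma \ref{rankbound} yields $\text{rk } E_{16n^2}(\Q) \equiv t - 1 \equiv 0 \pmod 2$, hence the rank is either $0$ or $2$. There is no substantive mathematical obstacle beyond what was already handled in \S\ref{sec2}; the only care required is the bookkeeping to match each of the four cases of the theorem with the correct branch of the Selmer group computations, in particular verifying that cases (3) and (4) genuinely avoid the $\dim = 5$ sub-case of Proposition \ref{sylvesterpq4}(1). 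The proofs for $n = \ell_1^2\ell_2$, $n = \ell_1\ell_2^2$, and $n = \ell_1^2\ell_2^2$ in case (1), and the corresponding variants in cases (2)--(4), follow the same pattern via the parallel statements in Propositions \ref{sylvesterp2q}--\ref{sylvesterpq4}.
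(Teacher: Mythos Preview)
Your proposal is correct and follows essentially the same approach as the paper's own proof, which simply states that Propositions \ref{sylvesterpq}--\ref{sylvesterpq4} give $\dim_{\F_3} {\rm Sel}^\phi(E_{16n^2}/K)=3$ in all the listed cases and then invokes Lemma \ref{rankbound}. Your version is more explicit in matching each case to its source proposition, but the underlying argument is identical.
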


\begin{proof}
Props. \ref{sylvesterpq}-\ref{sylvesterpq4} give \begin{small}$\dim_{\F_3} {\rm Sel}^\phi(E_{16n^2}/K) =3$\end{small} in all the above-mentioned cases.  The statement follows from Lemma \ref{rankbound}. 
\end{proof}

\noindent {\bf Example (1)} Let \begin{small}$\ell_1=53$, $\ell_2=71$\end{small} and \begin{small}$n \in \{\ell_1\ell_2, \ell_1^2\ell_2, \ell_1^2\ell_2^2\}$,\end{small} we have  \begin{small}$n \equiv \pm 1 \pmod 9$\end{small} and \begin{small}$\text{rk } E_{-432n^2}(\Q) \le 2$\end{small} by Theorem \ref{thmtosylvesterrk2}(1). Further, assuming \begin{small}$\dim_{\F_3} \Sh(E_{16n^2}/\Q)[3]$\end{small} is even,  we get \begin{small}$\text{rk } E_{-432n^2}(\Q)=0 \text{ or } 2$\end{small}. 

\cite{sg} gives \begin{small}$\text{rk } E_{-432\ell_1^4\ell_2^2}(\Q)=2$\end{small} and \begin{small}$\text{rk } E_{-432n^2}(\Q)=0$\end{small} for \begin{small}$n \in \{\ell_1\ell_2, \ell_1^2\ell_2^2\}$\end{small}. 

\noindent {\bf Example (2)} Take \begin{small}$\ell_1=37$, $\ell_2=29$\end{small}. Then \begin{small}$n \not\equiv \pm 1 \pmod 9$\end{small} and \begin{small}$\ell_2$\end{small} is a cube in \begin{small}$\F_{\ell_1}$.\end{small} For \begin{small}$n \in \{\ell_1\ell_2, \ell_1^2\ell_2, \ell_1\ell_2^2\}$,\end{small} we have \begin{small}$\text{rk } E_{-432n^2}(\Q) \le 2$\end{small} by Theorem \ref{thmtosylvesterrk2}(2). Further, assuming \begin{small}$\dim_{\F_3} \Sh(E_{16n^2}/\Q)[3]$\end{small} is even,  we get \begin{small}$\text{rk } E_{-432n^2}(\Q)=0 \text{ or } 2$\end{small}. 

Using \cite{sg}, \begin{small}$\text{rk } E_{-432\ell_1^4\ell_2^2}(\Q)=0$\end{small} and \begin{small}$\text{rk } E_{-432n^2}(\Q)=2$\end{small} for \begin{small}$n \in \{\ell_1\ell_2, \ell_1^2\ell_2^2\}$\end{small}. 

\noindent {\bf Example (3)} Take \begin{small}$\ell_1=157$, $\ell_2=193$\end{small} and \begin{small}$n \in \{\ell_1^2\ell_2,  \ell_1\ell_2^2\}$.\end{small} Then we have \begin{small}$\text{rk } E_{-432n^2}(\Q) \le 2$\end{small} by Theorem \ref{thmtosylvesterrk2}(3). Further, the assumption that \begin{small}$\dim_{\F_3} \Sh(E_{16n^2}/\Q)[3]$\end{small} is even implies  \begin{small}$\text{rk } E_{-432n^2}(\Q)=0 \text{ or } 2$\end{small}. 

Using \cite{sg}, \begin{small}$\text{rk } E_{-432\ell_1^4\ell_2^2}(\Q)=2$\end{small} and \begin{small}$\text{rk } E_{-432\ell_1^2\ell_2^4}(\Q)=0$\end{small}.

\noindent {\bf Example (4)} Consider \begin{small}$\ell_1=73$, $\ell_2=19$\end{small}. Since \begin{small}$\ell_1$\end{small} is not a cube modulo \begin{small}$\ell_2$\end{small}, it follows that at least one of \begin{small}$\pi_{\ell_1}, \pi_{\ell_1}'$\end{small} is not a cube in \begin{small}$\F_{\ell_2}$.\end{small} Thus by \ref{thmtosylvesterrk2}(4), for \begin{small}$n= \ell_1\ell_2$\end{small} and \begin{small}$n= \ell_1^2\ell_2^2$\end{small}, we have \begin{small}$\text{rk } E_{-432n^2}(\Q) \le 2$.\end{small} Further, if we assume that \begin{small}$\dim_{\F_3} \Sh(E_{16n^2}/\Q)[3]$\end{small} is even, then we have \begin{small}$\text{rk } E_{-432n^2}(\Q)=0 \text{ or } 2$\end{small}.

From \cite{sg}, we see that \begin{small}$\text{rk } E_{-432\ell_1^2\ell_2^2}(\Q) = 2$ and $\text{rk } E_{-432\ell_1^4\ell_2^4}(\Q)=0$.\end{small}

\begin{theorem}\label{thmtosylvesterrk4}
   Let \begin{small}$n$\end{small} be a cube-free integer co-prime to $3$ having  exactly two distinct prime factors \begin{small}$\ell_1$\end{small} and \begin{small}$\ell_2$.\end{small} If  
     \begin{small}$\ell_1 \equiv \ell_2  \equiv 1 \pmod 9$\end{small} and both \begin{small}$\pi_{\ell_1}, \pi_{\ell_1}'$\end{small} is are cubes in \begin{small}$\F_{\ell_2}$, \end{small} then  \begin{small}$\text{rk } E_{-432n^2}(\Q) \le 4$.\end{small}
     
     Further, assume that \begin{small}$\dim_{\F_3} \Sh(E_{16n^2}/\Q)[3]$\end{small} is even. Then we have \begin{small}$\text{rk } E_{-432n^2}(\Q) \in \{0,2,4\}$.\end{small}   
\end{theorem}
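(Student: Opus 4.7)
The strategy is essentially a direct assembly of the two main technical inputs already in place: Proposition \ref{sylvesterpq4} (computation of $\dim_{\F_3}{\rm Sel}^\phi(E_{16n^2}/K)$ for $k_1 = 2$) and Lemma \ref{rankbound} (passage from Selmer dimension to the rank bound and the mod $2$ parity statement). The plan is to first identify the hypotheses of the theorem with the ``largest'' case of Proposition \ref{sylvesterpq4}, then to feed the resulting Selmer dimension $t=5$ into Lemma \ref{rankbound}.

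The first step is the hypothesis verification. Since $\ell_1 \equiv \ell_2 \equiv 1 \pmod 9$, for any of the admissible forms $n \in \{\ell_1\ell_2,\ \ell_1^2\ell_2,\ \ell_1\ell_2^2,\ \ell_1^2\ell_2^2\}$ we automatically have $n \equiv 1 \pmod 9$. Combined with the assumption that both $\pi_{\ell_1}$ and $\pi'_{\ell_1}$ are cubes modulo $\pi_{\ell_2}$, i.e.\ $\bigl(\tfrac{\pi_{\ell_1}}{\pi_{\ell_2}}\bigr)_3 = \bigl(\tfrac{\pi'_{\ell_1}}{\pi_{\ell_2}}\bigr)_3 = 1$, we are exactly in the ``maximal'' branch of Proposition \ref{sylvesterpq4}(1). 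Therefore $\dim_{\F_3}{\rm Sel}^\phi(E_{16n^2}/K) = 5$. (A small thing to double-check is that Proposition \ref{sylvesterpq4} was stated for $n = \ell_1\ell_2,\ \ell_1^2\ell_2,\ \ell_1^2\ell_2^2$; the case $n = \ell_1\ell_2^2$ is covered by symmetry in $\ell_1,\ell_2$, and the resulting Selmer dimensions are the same because the hypothesis on cubic residues is symmetric after appropriate reindexing.)

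The second step is to invoke Lemma \ref{rankbound} with $t = 5$. This immediately gives
\[
\text{rk } E_{16n^2}(\Q) \le t - 1 = 4,
\]
and since $\phi_n$ is a rational $3$-isogeny we have $\text{rk } E_{-432n^2}(\Q) = \text{rk } E_{16n^2}(\Q) \le 4$, which is the first assertion of the theorem.

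For the refinement, assume $\dim_{\F_3}\Sh(E_{16n^2}/\Q)[3]$ is even. The second part of Lemma \ref{rankbound} then yields
\[
\text{rk } E_{16n^2}(\Q) \equiv t - 1 \equiv 4 \equiv 0 \pmod 2,
\]
so $\text{rk } E_{-432n^2}(\Q)$ is a non-negative even integer at most $4$, hence lies in $\{0,2,4\}$. No genuine obstacle arises here: the only substantive work is hidden inside Proposition \ref{sylvesterpq4}(1) (the explicit exhibition of five independent Selmer classes generated by $(\overline{n}^2,\overline{n}),\ (\overline{\zeta^2},\overline{\zeta}),\ (\overline{\pi_{\ell_1}}^2,\overline{\pi_{\ell_1}}),\ (\overline{\pi'^{\,2}_{\ell_1}},\overline{\pi'_{\ell_1}}),\ (\overline{\pi_{\ell_2}}^2,\overline{\pi_{\ell_2}})$), which has already been verified locally at every prime using Lemmas \ref{imgofkummap} and \ref{imgofdeltaatp}, and the cohomological bookkeeping in Lemma \ref{rankbound} (which depends on \cite[Prop.~49]{bes} to ensure $\dim_{\F_3} R$ is even). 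Consequently the proof is essentially a one-line citation chain, and the result should be stated and proved exactly as its siblings Theorems \ref{thmtosylvesterrk1}, \ref{thmtosylvesterrk3}, \ref{thmtosylvesterrk2}.
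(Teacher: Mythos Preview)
Your proposal is correct and follows exactly the same approach as the paper: invoke Proposition~\ref{sylvesterpq4}(1) to obtain $\dim_{\F_3}{\rm Sel}^\phi(E_{16n^2}/K)=5$, then apply Lemma~\ref{rankbound} with $t=5$ to deduce both the rank bound and the parity statement. Your additional remarks (that $\ell_1\equiv\ell_2\equiv 1\pmod 9$ forces $n\equiv 1\pmod 9$ for all four forms, and that the case $n=\ell_1\ell_2^2$ is covered by symmetry) are helpful clarifications not spelled out in the paper's terse two-line proof.
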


\begin{proof}
From Prop. \ref{sylvesterpq4}, we see that that \begin{small}$\dim_{\F_3} {\rm Sel}^\phi(E_{16n^2}/K) =5$\end{small} in the above mentioned case. The statement follows from Lemma \ref{rankbound}. 
\end{proof}

\noindent {\bf Example} Let \begin{small}$\ell_1=199$, $\ell_2=109$\end{small}. Notice that \begin{small}$\ell_1$\end{small} is a cube in \begin{small}$\F_{\ell_2}$\end{small}. Next observe that there exists an \begin{small}$\alpha \in \Z_{\ell_2}$\end{small} such that \begin{small}$\alpha \equiv 18 \pmod{109}$\end{small}. Writing \begin{small}$\pi_{\ell_1} = \frac{1}{2}(11+ 15 \alpha)$\end{small}, we see that \begin{small}$\pi_{\ell_1} \equiv -23 \pmod{109} \equiv (-11)^3 \pmod{109}$\end{small}. As a consequence, both \begin{small}$\pi_{\ell_1}, \pi_{\ell_1}'$\end{small} are cubes in \begin{small}$\F_{\ell_2}$\end{small}. From Theorem \ref{thmtosylvesterrk4}, we have \begin{small}$\text{rk } E_{-432n^2}(\Q) \le 4$\end{small} for \begin{small}$n \in \{\ell_1\ell_2,  \ell_1^2\ell_2, \ell_1\ell_2^2, \ell_1^2\ell_2^2\}$\end{small}.  Further, if we assume that \begin{small}$\dim_{\F_3} \Sh(E_{16n^2}/\Q)[3]$\end{small} is even, then we have \begin{small}$\text{rk } E_{-432n^2}(\Q)=0 \text{ or } 2 \text{ or } 4$\end{small}.

From \cite{sg}, we see that \begin{small}$\text{rk } E_{-432\ell_1^2\ell_2^2}(\Q)=4$, $\text{rk } E_{-432\ell_1^4\ell_2^2}(\Q)=2$ and $\text{rk } E_{-432\ell_1^2\ell_2^4}(\Q) = \text{rk } E_{-432\ell_1^4\ell_2^4}(\Q)=0$.\end{small}

\begin{rem}
For a prime \begin{small}$\ell \equiv 1 \pmod 9$,\end{small} Sylvester's conjecture is unable to pinpoint which integers \begin{small}$n$\end{small} of the form \begin{small}$\ell$\end{small} or \begin{small}$\ell^2$\end{small} are rational cube sum. This question was studied by \cite{rz}. The Birch and Swinnerton-Dyer conjecture predicts that 
\begin{small}$$ L(E_{16n^2}/\Q, 1) = * C_n,$$\end{small}
where $*$ is an non-zero number (depending on \begin{small}$n$\end{small}) and \begin{small}$C_n = 0$\end{small} if and only if \begin{small}$n$\end{small} is a rational cube sum (note that if \begin{small}$n$\end{small} is not a rational cube sum, then one expects that \begin{small}$C_n \neq 0$,\end{small} and  \begin{small}$C_n = |\Sh(E_{16n^2}/\Q)|$\end{small} which is expected to be a square integer). \cite{rz} provided efficient methods to numerically test for which of these integers \begin{small}$n$\end{small} of the form \begin{small}$\ell$\end{small} or \begin{small}$\ell^2$\end{small} with \begin{small}$\ell \equiv 1 \pmod 9$, $C_n=0.$\end{small}

In a similar fashion, one can ask: 
Let \begin{small}$n$\end{small} be a cube-free integer co-prime to $3$ with exactly two distinct prime factors \begin{small}$\ell_1$\end{small} and \begin{small}$\ell_2$.\end{small} Can you provide an efficient method to numerically test which of the following integers \begin{small}$n$\end{small} are rational cube sum?
\begin{itemize}
    \item \begin{small}$n \equiv 1 \pmod 9$\end{small} with \begin{small}$\ell_1 \equiv \ell_2 \equiv 1 \pmod 3$,\end{small}
    \item \begin{small}$n \equiv \pm 1 \pmod 9$\end{small} with \begin{small}$\ell_1 \equiv  \ell_2 \equiv 8 \pmod 9$,\end{small}
    \item \begin{small}$n \not\equiv \pm 1 \pmod 9$\end{small} with \begin{small}$\ell_1 \equiv -\ell_2 \equiv 1 \pmod 3$\end{small} and \begin{small}$\ell_2$\end{small} a cube in \begin{small}$\F_{\ell_1}$.\end{small}
\end{itemize}
\end{rem}

\end{document}